\documentclass[a4paper,11pt,reqno]{amsart}

\usepackage{amssymb}
\usepackage{epsfig}
\usepackage{amsfonts}
\usepackage{amsmath}
\usepackage{euscript}
\usepackage{amscd}
\usepackage{amsthm}
\DeclareMathAlphabet{\mathpzc}{OT1}{pzc}{m}{it}
\usepackage{color}
\usepackage{mathtools}

\textwidth 17.175cm
\textheight 23.125cm
\hoffset -56pt
\voffset -48pt

\usepackage{mathrsfs}

\DeclareFontFamily{OT1}{pzc}{}
\DeclareFontShape{OT1}{pzc}{m}{it}{<-> s * [1.10] pzcmi7t}{}
\DeclareMathAlphabet{\mathpzc}{OT1}{pzc}{m}{it}

\DeclareSymbolFont{SY}{U}{psy}{m}{n}
\DeclareMathSymbol{\emptyset}{\mathord}{SY}{'306}

\theoremstyle{plain}

\newtheorem{thm}{Theorem}[section]
\newtheorem*{thm*}{Theorem}
\newtheorem{cor}[thm]{Corollary}
\newtheorem{lem}[thm]{Lemma}
\newtheorem{prop}[thm]{Proposition}
\newtheorem{defn}[thm]{Definition}
\newtheorem{rem}[thm]{Remark}

\numberwithin{equation}{section}

\def\C{{\mathbb C}}

\def\v{\varphi}
\def\l{\lambda}

\def\ra{\rightarrow}
\def\ov{\overline}
\def\lo{\longrightarrow}

\def\m{\mathcal}
\def\mb{\mathbb}
\def\mr{\mathrm}
\def\a{\alpha}
\def\b{\beta}
\def\g{\gamma}
\def\d{\sum}

\def\wi{\widetilde}

\def\w{\widehat }

\def\beq{\begin{eqnarray}}
\def\eeq{\end{eqnarray}}
\def\beqa{\begin{eqnarray*}}
\def\eeqa{\end{eqnarray*}}

\def\ov{\overline}
\def\bl{\boldsymbol}

\def\i{\prime}

%
%

\newcommand{\be}{\begin{equation}}
\newcommand{\ee}{\end{equation}}
\newcommand{\bea}{\begin{eqnarray}}
\newcommand{\eea}{\end{eqnarray}}
\newcommand{\Bea}{\begin{eqnarray*}}
\newcommand{\Eea}{\end{eqnarray*}}
\newcommand{\inner}[2]{\langle #1,#2 \rangle }%

\newcounter{cnt1}
\newcounter{cnt2}
\newcounter{cnt3}
\newcommand{\blr}{\begin{list}{$($\roman{cnt1}$)$}
 {\usecounter{cnt1} \setlength{\topsep}{0pt}
 \setlength{\itemsep}{0pt}}}
\newcommand{\bla}{\begin{list}{$($\alph{cnt2}$)$}
 {\usecounter{cnt2} \setlength{\topsep}{0pt}
 \setlength{\itemsep}{0pt}}}
\newcommand{\bln}{\begin{list}{$($\arabic{cnt3}$)$}
 {\usecounter{cnt3} \setlength{\topsep}{0pt}
 \setlength{\itemsep}{0pt}}}
\newcommand{\el}{\end{list}}

\begin{document}
\title[reducing submodules]{Reducing sub-modules of the Bergman module $\mathbb A^{(\lambda)}(\mathbb D^n)$\\ under the action of the symmetric group}
\author[Biswas]{Shibananda Biswas}
\author[Ghosh]{Gargi Ghosh}
\address[Biswas, Ghosh and Shyam Roy]{Indian Institute of Science Education and Research Kolkata, Mohanpur 741246, Nadia, West Bengal, India}
\email[Biswas]{shibananda@gmail.com}
\email[Ghosh]{gg13ip034@iiserkol.ac.in}
\email[Shyam Roy]{ssroy@iiserkol.ac.in}
\author[Misra]{Gadadhar Misra}
\address[Misra]{Department of Mathematics, Indian Institute of Science, Banaglore 560012, India}
\email[Misra]{gm@math.iisc.ernet.in}
\author[Shyam Roy]{Subrata Shyam Roy}

\thanks{The work of S. Biswas is partially supported by Inspire Faculty Fellowship (IFA-11MA-06)
funded by DST, India at IISER Kolkata. The research of G. Misra was supported, in part, by a grant of the project MODULI under the IRSES Network, the J C Bose National Fellowship and the UGC, SAP -- CAS}

\subjclass[2010]{47A13, 47B32, 20B30} \keywords{Hilbert modules, locally free, symmetric functions, spanning section}
\begin{abstract}
The weighted Bergman spaces on the polydisc, $\mb A^{(\lambda)}(\mb D^n)$, $\lambda>0,$
splits into orthogonal direct sum of subspaces $\mb P_{\bl p}\big(\mb A^{(\l)}(\mb D^n)\big)$ indexed by the partitions $\bl p$ of $n,$
which are in one to one correspondence with the equivalence classes of the irreducible representations of the symmetric group on $n$ symbols.
In this paper, we  prove that each sub-module $\mb P_{\bl p}\big(\mb A^{(\l)}(\mb D^n)\big)$ is a \emph{locally free Hilbert module} of \emph{rank} equal to square of the dimension $\chi_{\bl p}(1)$ of the corresponding irreducible representation.
Given two partitions $\bl p$ and $\bl q$, we show  that  if  $\chi_{\bl p}(1) \ne \chi_{\bl q}(1),$ then the sub-modules
$\mathbb P_{\bl p}\big (\mb A^{(\lambda)}(\mb D^n)\big )$ and $\mathbb P_{\bl q}\big (\mb A^{(\lambda)}(\mb D^n)\big )$ are not equivalent. For the trivial and the sign representation corresponding to the partitions  $\bl p = (n)$ and $\bl p = (1,\ldots,1)$, respectively, we prove that the sub-modules $\mb P_{(n)}\big(\mb A^{(\l)}(\mb D^n)\big)$ and $\mb P_{(1,\ldots,1)}\big(\mb A^{(\l)}(\mb D^n)\big)$ are inequivalent. In particular, for $n=3$, we show that all the sub-modules in this decomposition are inequivalent.
\end{abstract}

\maketitle
\section{Introduction}

In this paper, we study the weighted Bergman space $\mb A^{(\l)}(\mb D^n),$ $\lambda > 1,$ of square integrable holomorphic functions defined on the polydisc $\mb D^n$ with respect to the measure $\big ( \prod_{i=1}^n(1-|z_i|^2)^{\l -2} \big ) dV(\bl z),$ $\bl z\in \mb D^n.$ (In the sequel, we also consider the case of $\l >0.$)
The bi-holomorphic automorphism group $\mr{Aut}(\mb D^n)$ is easily seen to be the
semi-direct product ${\mr{Aut}(\mb D)}^n\rtimes\mathfrak S_n,$ where $\mathfrak S_n$ is the permutation group on
$n$ symbols. For $\Phi\in\mr{Aut}(\mb D^n),$ define $U:{\mr{Aut}(\mb D^n)}\to \m L\big(\mb A^{(\l)}(\mb D^n)\big)$
by the formula:
$$U(\Phi^{-1}) h=\big(\det (D\Phi)\big)^{\l/2}h\circ \Phi,\,\, h\in\mb A^{(\l)}(\mb D^n).$$

Since $\big(\det (D\Phi)\big)^{\l/2}(\bl z):\mr{Aut}(\mb D^n)\times \mb D^n \to \mathbb C$ is
a (projective) cocycle,  it follows that the map $U$ defines a (projective) unitary representation. The Hilbert space $\mb A^{(\l)}(\mb D^n)$ is also a module over the polynomial ring $\C[\bl z],$ namely,
\be \label{module}
m_p(h) = p \cdot h,\,\, p \in  \C[\bl z],\,\, h \in \mb A^{(\l)}(\mb D^n),\nonumber\ee
where $p\cdot h$ is the point-wise multiplication.
Setting  $(\Phi \cdot f)(\bl z) = f\big (\Phi^{-1}(\bl{z}) \big ),$ we have the relationship
$m_{\Phi \cdot p} = U(\Phi)^* \, m_p\, U(\Phi),\,\, \Phi\in \mr{Aut}(\mb D^n), p \in \mathbb C[\bl z],$
which is analogous to the imprimitivity introduced by Mackey (cf. \cite[Chapter 6]{VSV}).
The imprimitivities of Mackey have been studied extensively  and are related to induced representations, representations of the semi-direct product and homogeneous vector bundles, see Theorems 6.12, 6.20 and 6.24 in \cite{VSV}, respectively.  However, the situation we have described
is different in that the module action is defined over the ring of analytic polynomials rather than the algebra of continuous functions. This, we believe, merits a detailed investigation and the outcome, see \cite{KM, MU}, so far is very encouraging.
Also, the restriction of the representation $U$
to the  subgroup $\triangle:=\{(\varphi, \ldots , \varphi): \varphi \in \mbox{\rm Aut}(\mathbb D) \}$ of $\mr{Aut}(\mb D^n)$  has a decomposition into irreducible components  known as the Clebsch-Gordan decomposition.  On the other hand, the symmetric group acts on $\mb A^{(\l)}(\mb D^n)$ via
the unitary map $R_{\sigma^{-1}}: h \to h\circ \sigma,\,\sigma \in \mathfrak S_n.$ The Hilbert space $\mb A^{(\l)}(\mb D^n)$
is also a module over the ring  of the symmetric polynomials $\C[\bl z]^{\mathfrak S_n},$ where the module map is given by the formula:
$\mathfrak m_p(h) = p \cdot h,\,\,p\in \C[\bl z]^{\mathfrak S_n}.$
Here, we propose to study the imprimitivity $\big (\mb A^{(\l)}(\mb D^n),  \mathfrak m_p, R_\sigma \big )$ and obtain a decomposition of the Hilbert module $\mb A^{(\l)}(\mb D^n)$ into sub-modules like in the more familiar Clebsch-Gordan decomposition mentioned above.

Let $\widehat{\mathfrak S_n}$ denote the equivalence classes of all irreducible representations of $\mathfrak S_n.$ 
It is well known that these are finite dimensional and they
are in one to one correspondence with partitions $\bl p$ of $n$ \cite[Theorem 4.3]{FH}. Recall that a partition $\bl p$ of $n$ is a decreasing finite sequence $\bl p = (p_1,\ldots,p_k)$ of non-negative integers  such that $\sum_{i=1}^k p_i = n.$ A partition $\bl p$ of $n$ is denoted by $\bl p\vdash n$. Let $\bl\pi_{\bl p}$ be a unitary representation of $\mathfrak S_n$ in the equivalence class of $\bl p\vdash n$, that is, $\bl\pi_{\bl p}(\sigma) = \big (\!\big (\bl\pi_{\bl p}^{ij}(\sigma)\big )\!\big )_{i,j = 1}^{m} \in \C^{m\times m}$, $\sigma \in \mathfrak S_n,$ where $m = \chi_{\bl p}(1)$
and $\chi_{\bl p} (\sigma)= \mbox{\rm trace}\big (\bl \pi_{\bl p} (\sigma)\big ),$ $\sigma \in \mathfrak S_n,$  is the character of the representation $\bl\pi_{\bl p}.$

A decomposition of $\mb A^{(\l)}(\mb D^n)$ under the natural action of the group ${\mathfrak S_n},$ which is the restriction of $U$ to $\mathfrak S_n,$ is given by the formula (cf. \cite{BS} and \cite{MSZ}):
\Bea \label{C-G}
\mb A^{(\l)}(\mb D^n) = \bigoplus_{\bl p\,\vdash\, n}  \mathbb P_{\bl p}\,\big (\mb A^{(\l)}(\mb D^n)\big ),\,\, \bl p \vdash n\,\, \mbox{\rm is a partition of}\,\, n\in \mathbb N, \Eea
where
$\mathbb{P}_{\bl p} f  =
\frac{\chi_{\bl p}(1)}{n!}\sum_{\sigma\in\mathfrak S_n}\ov{\chi_{\bl p}(\sigma)} (f \circ \sigma^{-1}), \sigma \in \mathfrak S_n.$
On the right hand side, the irreducible representation of the group $\mathfrak S_n$ corresponding to the partition $\bl p$ is not multiplicity free.  Both sides of the equation \eqref{C-G} happen to be modules over  $\C[\bl z]^{\mathfrak S_n},$ what is more, the explicit projection formula has been used extensively in \cite{BS} to study various properties of the Hilbert module $\mathbb P_{\bl p} \big ( \mb A^{(\l)}(\mb D^n)\big ).$


For the sake of concreteness, we have picked
the Hilbert module $\mb A^{(\l)}(\mb D^n)$ over the ring $\C[\bl z]^{\mathfrak S_n},$ however, the questions we raise here can be made up in similar but much more general context.

Let $K$ be a $\mathfrak S_n$-invariant positive definite  kernel on $\mathbb D^n$ and
$\mathcal H_K$ be the corresponding reproducing kernel Hilbert space.
Let $\bl\pi_{\bl p}$ be the matrix representation of the finite dimensional unitary representation of $\mathfrak S_n$ corresponding to the partition $\bl p\vdash n.$
Define the operators $\mathbb{P}_{\bl p}^{ij}: \mathcal H_K\ra\mathcal H_K,\, 1\leq i, j\leq \chi_{\bl p}(1),$ by the formula
$$\mathbb{P}_{\bl p}^{ij} f = \frac{\chi_{\bl p}(1)}{n!}\sum_{\sigma\in\mathfrak{S}_n}\bl\pi_{\bl p}^{ji}(\sigma^{-1})(f \circ \sigma^{-1}).
$$
Also, $\mathbb{P}_{\bl p} = \sum_{i = 1}^{\chi_{\bl p}(1)} \mathbb{P}_{\bl p}^{ii}$. Specializing to our situation, that is, when $K(\bl z , \bl w) =  \prod_{i=1}^n(1-z_i \bar{w}_i)^{-\l }$ and $\mathcal H_K= \mathbb A^{(\l)} (\mathbb D^n),$ we ask
\begin{enumerate}
\item if the sub-modules $\mb P_{\bl p}\big(\mb A^{(\l)}(\mb D^n)\big)$
and $\mb P_{\bl q}\big(\mb A^{(\l)}(\mb D^n)\big)$ are inequivalent  for distinct partitions $\bl p$ and $\bl q$ of $n;$
\item if the reducing sub-modules $\mb P_{\bl p}^{ii}\big(\mb A^{(\l)}(\mb D^n)\big)$ and $\mb P_{\bl q}^{jj}\big(\mb A^{(\l)}(\mb D^n)\big)$ are inequivalent whenever $(\bl p, i)\ne (\bl q, j),$ where $\bl p,\, \bl q$ are partitions of $n,$  $1\leq i \leq \chi_{\bl p}(1)$ and $1\leq j \leq \chi_{\bl q}(1),$
\item if the reducing sub-modules $\mb P_{\bl p}^{ii}\big(\mb A^{(\l)}(\mb D^n)\big),$ $\bl p$ partition of $n$ and $1\leq i \leq \chi_{\bl p}(1),$ are minimal?
\end{enumerate}

\noindent For any partition $\bl p$ of $n,$ we have shown, see Corollary \ref{cor2.13}, that
the Hilbert modules $\mb P_{\bl p}\big(\mb A^{(\l)}(\mb D^n)\big)$  are locally free of rank $\chi_{\bl p}(1)^2$ on an open subset of $\mb G_n.$
Furthermore, using similar arguments, we show that the  sub-modules $\mb P_{\bl p}^{ii}\big(\mb A^{(\l)}(\mb D^n)\big),$ $1\leq i \leq \chi_{\bl p}(1),$ are locally free of rank $\chi_{\bl p}(1).$
%
%
Therefore, if $\chi_{\bl p}(1) \neq \chi_{\bl q}(1)$, then the sub-modules $\mb P_{\bl p}^{ii}\big(\mb A^{(\l)}(\mb D^n)\big)$ and $\mb P_{\bl q}^{jj}\big(\mb A^{(\l)}(\mb D^n)\big)$ are not equivalent, see Theorem \ref{m1}.
Although, we haven't been able to resolve this issue when $\chi_{\bl p}(1) = \chi_{\bl q}(1),$ in general, we have obtained the answer in one important special case, namely, for all partition $\bl p$ of $n$ such that $\chi_{\bl p}(1) =1.$ For  $n\geq 2,$ there are only two such partitions: $\bl p= (n)$ or $(1,\ldots , 1).$ We show that the two  sub-modules  $\mb P_{(n)}\big (\mb A^{(\l)}(\mb D^n)\big )$ and $\mb P_{(1,\ldots,1)}\big (\mb A^{(\l)}(\mb D^n)\big )$ are  inequivalent (there is no intertwining module map between them that is unitary)  over $\C[\bl z]^{\mathfrak S_n},$ see Theorem \ref{m}.
Also these summands are locally free of rank $1,$ therefore they are irreducible and hence minimal.
For $n=2,$ in the decomposition $\mathbb A^{(\l)}(\mathbb D^2) = \mb P_{(2)}\big ( \mb A^{(\l)}(\mb D^2) \big )\oplus \mb P_{(1,1)}\big (\mb A^{(\l)}(\mb D^2)\big ),$ the two summands are minimal and inequivalent. Therefore, in this case, we have answered the questions (1) - (3).
Furthermore, for $n=3$, it follows that all the submodules in the decomposition
$\oplus_{\bl p\,\vdash\, 3}\mathbb P_{\bl p}\big ( \mb A^{(\l)}(\mb D^3)\big )$
are inequivalent, see Corollary \ref{corf}. Along the way, we give an explicit formula, see Theorem \ref{BergKerGn}, for the weighted Bergman kernel of the symmetrized polydisc $\mathbb G_n$ in the co-ordinates of $\mathbb G_n$ rather than that of the polydisc $\mathbb D^n.$  In an earlier paper \cite{MSZ}, the case of $n=2$ was worked out.


For any partition $\bl p$ of $n,$ we recall from \cite{BS} that the commuting $n$-tuple of multiplications $M^{(\bl p)}_{\bl s} = (M_{s_1}, \ldots , M_{s_n})$ by the elementary symmetric functions $\bl s$  defined on the Hilbert space $\mb P_{\bl p}\big(\mb A^{(\l)}(\mb D^n)\big),$  $\l \geq 1,$ are examples of $\Gamma_n$-contractions. Since  $\mb P_{\bl p}\big(\mb A^{(\l)}(\mb D^n)\big)$ admits a further decomposition into a direct sum of the sub-modules $\mb P_{\bl p}^{ii}\big(\mb A^{(\l)}(\mb D^n)\big),$ $1\leq i \leq \chi_{\bl p}(1),$ it follows that the $n$-tuple  $M^{(\bl p)}_{\bl s}$ acting on these reducing subspaces is also a $\Gamma_n$-contraction, which is Theorem \ref{thmB} of this paper. What is more, we have shown that the Taylor  joint  spectrum of each of these $n$-tuples  is $\Gamma_n$ and thus, in these examples, the spectrum is a spectral set.

Since the Hilbert module $\mb P_{\bl p}\big(\mb A^{(\l)}(\mb D^n)\big),$ as well as the  sub-modules $\mb P_{\bl p}^{ii}\big(\mb A^{(\l)}(\mb D^n)\big),$ $1\leq i \leq \chi_{\bl p}(1)$, are locally free on some open subset of $\mb G_n,$
it follows that these are in one to one correspondence with holomorphic hermitian vector bundles defined on some open subset of $\mb G_n.$
The rank of this vector bundle is an invariant, albeit a very weak one. However, it is the rank  which is used to distinguish the sub-modules $\mb P_{\bl p}\big(\mb A^{(\l)}(\mb D^n)\big)$ in this paper.
We conclude the paper with an explicit realization of a spanning holomorphic cross-section for the sub-modules $\mb P_{\bl p}^{ii}\big(\mb A^{(\l)}(\mb D^n)\big).$ This provides an invariant that we believe will be useful in our future work.

\subsection*{\sf Acknoledgement}The research of Biswas, Misra and
Shyam Roy was partially supported through the programme ``Research in Pairs''  by the Mathematisches Forschungsinstitut Oberwolfach in 2016. It was completed during their short visit to ICMAT, Madrid in 2017. We thank both these institutions for their hospitality.
\section{Locally free Hilbert modules}
First, we  recall several useful definitions following \cite{DP, CG} and \cite{CD}.
\begin{defn}
A Hilbert space $\mathcal H$ is said to be a Hilbert module over the polynomial ring $\mathbb C[\bl{z}]$ in $n$ variables if the map $(p,h) \to p\cdot h,$ $p\in \mathbb C[\bl{z}], h\in \mathcal H,$ defines a homomorphism $p \mapsto T_p,$ where $T_p$ is bounded operator defined by  $T_p h = p \cdot h.$

Two Hilbert modules $\mathcal H$ and $\tilde{\mathcal H}$ are said to be (unitarily)  equivalent if there exists a unitary module map $U:\mathcal H \to \tilde{\mathcal H},$ that is, $UT_p = \tilde{T_p} U,\, p \in \mathbb C[\bl{z}].$
\end{defn}
Let $\C_{\bl w}$ be the one dimensional module over the polynomial ring $\mathbb C[\bl{z}]$ defined by the evaluation, that is, $(p, c) \to p(\bl w) c,\,\, c\in\mathbb C, p\in  \mathbb C[\bl{z}].$
Following \cite{DP}, we define the module tensor product of two Hilbert modules $\m H$ and $\C_{\bl w}$ over  $\mathbb C[\bl{z}]$ to be the quotient of the
space Hilbert space tensor product $\m H\otimes \mb C_{\bl w}$ by the subspace
\beqa
\mathcal N&:=& {\{p\cdot f\otimes 1_{\bl w} - f\otimes p(\bl w): p\in\mb C[\bl z] , f\in\m H\}}\\
&=& {\{(p - p(\bl w))f : p\in  \mb C[\bl z], f\in\m H\}}.
\eeqa
Thus
$$
\m H\otimes_{\mathbb C[\bl{z}]}\C_{\bl w} := (\m H \otimes \C) / \mathcal N,
$$
where the module action is defined by the compression of the operator $T_p\otimes 1_{\bl w},\,\, p\in \mathbb C[\bl{z}],$ to the subspace  $(\m H \otimes \C) / \mathcal N.$
We recall the notion of local freeness of a Hilbert module in accordance with \cite[Definition 1.4]{CD}.

\begin{defn}[Definition 1.4, \cite{CD}]\label{locally-free}
Let $\mathcal H$ be a Hilbert module over $\mathbb C[\bl{z}].$
Let $\Omega$ be a bounded open connected subset of $\mathbb C^n.$
We say $\mathcal H$ is locally free of rank $k$ at $\bl w$ in $\Omega^*:=\{\bl z \in \mathbb C^n: \bar{\bl z} \in \Omega\}$ if  there exists a neighbourhood $\Omega_0^*$ of $\bl w$ and holomorphic functions $\gamma_1, \gamma_2, \ldots , \gamma_k:\Omega^*_0 \to \mathcal H$ such that the linear span of the set of $k$ vectors $\{\gamma_1(\bl z), \ldots, \gamma_k(\bl z)\}$  is the module tensor product $\mathcal H \otimes_{\mathbb C[\bl{z}]} \mathbb C_{\bl{z}}.$ Following the terminology of \cite{CD}, we say that a module $\mathcal H$ is {\textit locally free on $\Omega$} of rank $k$  if it is locally free of rank $k$ at every $\bl w$ in $\Omega^*.$
\end{defn}

Let $\mb D^n=\{\bl z: |z_1|, \ldots , |z_n| < 1\}$ be the polydisc in $\mb C^n.$ For $\l>0,$ it is well known that the function $K^{(\l)}:\mb D^n\times\mb D^n\to\mb{C}$ given by the formula
\Bea
K^{(\l)}(\bl z, \bl w)=\prod_{j=1}^n(1-z_j\bar w_j)^{-\l},\,\, \bl z,\bl w\in\mb D^n,
\Eea
is positive definite. The function $K^{(\l)}$ uniquely determines a Hilbert space, say $\mb A^{(\l)}(\mb D^n),$  consisting of holomorphic functions defined on $\mb D^n$ with the reproducing property
$$
\langle f(\cdot) , K^{(\l)}(\cdot,\bl w) \rangle = f(\bl w),\,\, f\in \mb A^{(\l)}(\mb D^n),\,\, \bl w\in \mb D^n.
$$
For $\l>1,$ this coincides with the usual notion of the weighted Bergman spaces   $\mb A^{(\l)}(\mb D^n)$ defined as the Hilbert space of square integrable holomorphic functions on $\mb D^n$ with respect to the measure
$ dV^{(\l)}=\big(\frac{\l-1}{\pi}\big)^n\Big(\prod_{i=1}^n(1-r_i^2)^{\l-2}r_idr_id\theta_i\Big).$ The limiting case of $\l=1$ is the Hardy space $H^2(\mb D^n).$ Throughout the rest of this paper, we will assume that $\l >0.$

The natural action of the permutation group $\mathfrak S_n$ on $\mb C^n.$ is given by the formula:
$$(\sigma,\bl z)\mapsto\sigma\cdot\bl z:=(z_{\sigma^{-1}(1)},\ldots,z_{\sigma^{-1}(n)}),\,\,(\sigma,\bl z)\in \mathfrak S_n\times\mb C^n. $$
The induced action on the Hilbert space $\mb A^{(\l)}(\mb D^n)$ is $f\mapsto f\circ\sigma^{-1},\, \sigma \in \mathfrak S_n.$  Let $\bl s:\mb C^n\to\mb C^n$ be the symmetrization map $\bl s=(s_1,\ldots, s_n),$ where $s_k(\bl z) = \sum_{1\leq i_1, \ldots , i_k\leq n} z_{i_1} \cdots z_{i_k},$ $1 \leq k \leq n.$  Let $(M_1, \ldots , M_n)$ denote the $n$-tuple of multiplication by the coordinate functions $z_i,$ $1\leq i \leq k$ on $\mb A^{(\l)}(\mb D^n).$  Clearly,  $(M_{s_1},\ldots, M_{s_n})$ defines a commuting tuple of bounded linear operators  on $\mb A^{(\l)}(\mb D^n).$ Define $\Delta(\bl z)=\prod_{i<j}(z_i-z_j),$  for $\bl z\in \mb C^n$. Let
 $$
 \m Z=\{\bl z\in \mb D^n\mid \Delta(\bl z)=0\} = \{\bl z\in\mb D^n\mid z_i=z_j \mbox{~for some~} i\neq j, 1\leq i,j\leq n\}
 $$
 and $\mb G_n=\bl s(\mb D^n).$ For every $\bl u\in\mb G_n\setminus \bl s(\m Z),$ we note that the set $\bl s^{-1}(\{\bl u\})$ has exactly $n!$ elements.
If $M_\phi$ is a multiplication operator on $\mb A^{(\l)}(\mb D^n)$ by a holomorphic function $\phi$, then $M_\phi^*K^{(\l)}_{\bl w} = \ov{\phi(\bl w)}K^{(\l)}_{\bl w}$ for $\bl w\in\mb D^n$. Therefore we have the following lemma.

\begin{lem}
For $\sigma\in\mathfrak S_n,$ $i=1,\ldots, n,$ $M_i^*K^{(\l)}_{\bl w_\sigma}=\bar w_{\sigma^{-1}(i)}K^{(\l)}_{\bl w_\sigma} \mbox{~and~} M_{s_i}^*K^{(\l)}_{\bl w_\sigma}=\ov {s_i(\bl w)}K^{(\l)}_{\bl w_\sigma}.$
\end{lem}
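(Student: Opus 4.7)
The plan is to derive both identities as immediate specializations of the general eigenvector property for adjoints of multipliers on a reproducing kernel Hilbert space that is displayed in the sentence preceding the lemma, namely $M_\phi^* K^{(\l)}_{\bl w} = \ov{\phi(\bl w)} K^{(\l)}_{\bl w}$ for any holomorphic $\phi$ that induces a bounded multiplication operator on $\mb A^{(\l)}(\mb D^n)$. Since the coordinate functions $z_i$ and the elementary symmetric polynomials $s_i$ are polynomials on the bounded domain $\mb D^n$, both $M_i$ and $M_{s_i}$ are bounded, so the displayed identity applies to them. Evaluating it at the point $\bl w_\sigma = \sigma\cdot \bl w = (w_{\sigma^{-1}(1)},\ldots, w_{\sigma^{-1}(n)})$ rather than at a generic $\bl w$ is all that is required.

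For the first identity, I would take $\phi(\bl z) = z_i$, so that $\phi(\bl w_\sigma) = (\bl w_\sigma)_i = w_{\sigma^{-1}(i)}$ by the definition of the $\mathfrak S_n$-action on $\mb C^n$ recalled earlier. Substituting this into the eigenvector formula yields $M_i^* K^{(\l)}_{\bl w_\sigma} = \ov{w_{\sigma^{-1}(i)}}\, K^{(\l)}_{\bl w_\sigma}$, which is the first claim.

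For the second identity, I would take $\phi = s_i$, the $i$-th elementary symmetric polynomial, which is by definition $\mathfrak S_n$-invariant: $s_i(\sigma\cdot\bl w)=s_i(\bl w)$ for every $\sigma\in\mathfrak S_n$. Hence $s_i(\bl w_\sigma)=s_i(\bl w)$, and the eigenvector formula then gives $M_{s_i}^* K^{(\l)}_{\bl w_\sigma} = \ov{s_i(\bl w_\sigma)}\, K^{(\l)}_{\bl w_\sigma} = \ov{s_i(\bl w)}\, K^{(\l)}_{\bl w_\sigma}$, which is the second claim.

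There is essentially no obstacle here: the entire content of the lemma is already in the reproducing-kernel fact stated immediately before it, together with the symmetry of the elementary symmetric polynomials. The only thing worth noting explicitly in the write-up is that the permutation $\sigma$ is absorbed in two different ways in the two identities, non-trivially in the coordinate case (producing the permuted index $\sigma^{-1}(i)$) and trivially in the symmetric case (where $\sigma$ disappears because $s_i$ is invariant).
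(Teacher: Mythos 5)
Your proposal is correct and follows exactly the route the paper intends: the lemma is stated as an immediate consequence of the preceding identity $M_\phi^*K^{(\l)}_{\bl w}=\ov{\phi(\bl w)}K^{(\l)}_{\bl w}$, specialized to $\phi=z_i$ and $\phi=s_i$ at the point $\bl w_\sigma$, with the $\mathfrak S_n$-invariance of $s_i$ giving $s_i(\bl w_\sigma)=s_i(\bl w)$. Nothing is missing.
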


Let $\mb C[\bl z]^{\mathfrak S_n}$ be the ring of invariants under the action of $\mathfrak S_n$ on $\mathbb C[\bl z],$ that is, $$\mb C[\bl z]^{\mathfrak S_n} = \{f\in \mathbb C[\bl z]: f(\sigma\cdot \bl z) = f(\bl z), \sigma \in \mathfrak S_n \}.$$  Furthermore,  $\mb C[\bl z]^{\mathfrak S_n}= \mathbb C[s_1, \ldots , s_n],$ see \cite[p. 39]{M}. We now state the main Theorem of this Section.
\begin{thm}\label{free}
The Hilbert module $\mb A^{(\l)}(\mb D^n)$ over $\mb C[\bl z]^{\mathfrak S_n}$ is locally free of rank $n!$ on  $\mb G_n\setminus \bl s(\m Z).$
\end{thm}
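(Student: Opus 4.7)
The plan is to identify the module tensor product
\[
\mb A^{(\l)}(\mb D^n)\otimes_{\C[\bl z]^{\mathfrak S_n}}\C_{\bl w}
\]
at a point $\bl w\in \mb D^n$ with $\bl u := \bl s(\bl w)\in \mb G_n\setminus \bl s(\m Z)$ with the joint kernel
\[
E_{\bl u} := \bigcap_{i=1}^n \ker\bigl(M_{s_i}^* - \overline{u_i}\,I\bigr)\subset \mb A^{(\l)}(\mb D^n),
\]
and then to exhibit an explicit basis of $n!$ reproducing kernels which vary holomorphically in a neighbourhood of $\bl u$. The identification with $E_{\bl u}$ is a by-now standard reproducing-kernel-Hilbert-module computation: a vector $g$ is orthogonal to $\m N := \{(p-p(\bl w))f : p\in \C[\bl z]^{\mathfrak S_n},\, f\in \mb A^{(\l)}(\mb D^n)\}$ precisely when $M_p^* g = \overline{p(\bl w)}\,g$ for every symmetric polynomial $p$, and by Taylor expansion in the generators $s_1,\ldots,s_n$ this reduces to the joint eigenvalue equations $M_{s_i}^* g = \overline{u_i}\,g$, $i=1,\ldots,n$.

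For the lower bound $\dim E_{\bl u}\ge n!$, the preceding lemma gives $K^{(\l)}(\cdot,\sigma\cdot \bl w)\in E_{\bl u}$ for every $\sigma\in \mathfrak S_n$, and since $\bl w\notin \m Z$ the $n!$ points $\sigma\cdot \bl w$ are distinct, so these kernels are linearly independent. The reverse inequality is the crux of the argument, and I would handle it using the Chevalley--Shephard--Todd theorem: $\C[\bl z]$ is a free $\C[\bl z]^{\mathfrak S_n}$-module of rank $n!$, so one can fix $P_1,\ldots,P_{n!}\in \C[\bl z]$ such that every polynomial $Q$ has a unique expression $Q = \sum_{j=1}^{n!} P_j q_j$ with $q_j\in \C[\bl z]^{\mathfrak S_n}$. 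Writing $I_{\bl u}\subset \C[\bl z]^{\mathfrak S_n}$ for the maximal ideal $\{p:p(\bl w)=0\}$, the ideal $I_{\bl u}\cdot \C[\bl z]$ has codimension $n!$ in $\C[\bl z]$ and is contained in the ideal $\bigcap_\sigma \mathfrak m_{\sigma\cdot \bl w}$ of the full orbit, which also has codimension $n!$; hence the two coincide. In particular the matrix $\bigl(P_j(\sigma\cdot \bl w)\bigr)_{\sigma,j}$ is invertible, and the scalars $q_j(\bl w)$ are explicit linear combinations of the orbit evaluations $Q(\sigma\cdot \bl w) = \langle Q,\, K^{(\l)}(\cdot,\sigma\cdot \bl w)\rangle$, so the functionals $Q\mapsto q_j(\bl w)$ extend boundedly to $\mb A^{(\l)}(\mb D^n)$. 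Approximating an arbitrary $h$ by polynomials $Q_m\to h$ and passing to the limit in $Q_m = \sum_j P_j q_j^{(m)}$ shows that $[P_1],\ldots,[P_{n!}]$ span the quotient $\mb A^{(\l)}(\mb D^n)/\overline{\m N}$, forcing $\dim E_{\bl u}\le n!$.

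Finally, to produce local holomorphic sections I would use that the Jacobian of $\bl s$ equals $\Delta$ up to a nonzero constant, so $\bl s:\mb D^n\to \mb G_n$ is a local biholomorphism at every $\bl w\in \mb D^n\setminus \m Z$. Fixing $\bl u_0\in \mb G_n\setminus \bl s(\m Z)$ and $\bl w\in \bl s^{-1}(\{\bl u_0\})$, there are a neighbourhood $\Omega_0$ of $\bl u_0$ in $\mb G_n\setminus \bl s(\m Z)$ and holomorphic inverse branches $\tau_\sigma:\Omega_0\to \mb D^n$ with $\tau_\sigma(\bl u_0) = \sigma\cdot \bl w$ for each $\sigma\in \mathfrak S_n$. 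The sections
\[
\gamma_\sigma(\bl v) := K^{(\l)}\bigl(\cdot,\,\tau_\sigma(\overline{\bl v})\bigr), \qquad \bl v\in \Omega_0^*,
\]
are holomorphic in $\bl v$ (composition of two antiholomorphic maps), and by the previous step their values span the module tensor product at every $\bl v\in \Omega_0^*$, establishing local freeness of rank $n!$. The main obstacle I foresee is the upper bound $\dim E_{\bl u}\le n!$, which requires transferring the purely algebraic Chevalley--Shephard--Todd decomposition to the infinite-dimensional Hilbert module through density of polynomials together with continuity of the coefficient functionals $Q\mapsto q_j(\bl w)$.
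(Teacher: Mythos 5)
Your proposal is correct and follows the same overall architecture as the paper's argument: identify the module tensor product over $\C[\bl z]^{\mathfrak S_n}$ with the joint kernel $\cap_{i=1}^n\ker\big(M_{s_i}-s_i(\bl w)\big)^*$, show that this kernel is exactly the $n!$-dimensional span of the orbit kernels $K^{(\l)}(\cdot,\sigma\cdot\bl w)$, and then use the $n!$ local inverse branches of $\bl s$ off $\bl s(\m Z)$ to produce a holomorphic frame. The one place where you genuinely diverge is the crux, namely the invertibility of the evaluation matrix $\big(P_j(\sigma\cdot\bl w)\big)$ attached to a free basis of $\C[\bl z]$ over $\C[\bl z]^{\mathfrak S_n}$ when $\bl w\notin\m Z$. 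The paper establishes this in two steps (Lemmas \ref{galois} and \ref{det}): linear independence of field embeddings shows the determinant is not identically zero for any basis, and then, for a homogeneous basis with $\deg p_\sigma=\ell(\sigma)$, a degree count combined with a divisibility argument identifies the determinant as a nonzero constant multiple of $\Delta(\bl w)^{n!/2}$, hence nonvanishing precisely off $\m Z$. Your codimension comparison --- $I_{\bl u}\C[\bl z]\subseteq\cap_{\sigma}\mathfrak m_{\sigma\cdot\bl w}$ with both subspaces of codimension $n!$, forcing equality and hence invertibility of the change-of-basis matrix --- is shorter, applies to an arbitrary free basis rather than one of prescribed degrees, and avoids the Galois-theoretic input; what it does not yield is the explicit identity $\det=c\,\Delta^{n!/2}$, which the paper records as being of independent interest. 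Your remaining steps (boundedness of the coefficient functionals $Q\mapsto q_j(\bl w)$ obtained by applying the inverse matrix to the orbit evaluations, and the limiting argument showing that $[P_1],\ldots,[P_{n!}]$ span the quotient) reproduce the content of the paper's Lemmas \ref{lem2.8} and \ref{dp}. One small point worth making explicit: the linear independence of the kernels at the $n!$ distinct orbit points is not automatic for an abstract reproducing kernel, but it is immediate here because $\mb A^{(\l)}(\mb D^n)$ contains the polynomials and therefore separates any finite set of points.
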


The proof is facilitated by breaking it up into several pieces. Some of these pieces make essential use of the fact that $\C[\bl z]$ is a finitely generated free module over $\C[\bl z]^{{\mathfrak S}_n}$ of rank $n!$ \cite[Theorem 1]{Y}. The motivation for the following lemma and some of the subsequent comments come from \cite{Ch}.

\begin{lem}\label{galois}
For any basis $\{p_\sigma\}_{\sigma\in\mathfrak S_n}$ of  $\mb C[\bl z]$ over $\mb C[\bl z]^{\mathfrak S_n},$ we have $$\det \big(\!\!\big(p_{\sigma}(\bl w_\tau)\big)\!\!\big)_{\sigma,\tau\in\mathfrak S_n}\not \equiv 0.$$
\end{lem}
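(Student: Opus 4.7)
The plan is Galois-theoretic. The extension $\mb C(\bl z)/\mb C(\bl z)^{\mathfrak S_n}$ is Galois of degree $n!$ with Galois group $\mathfrak S_n$, and the central observation is that the determinant in question is, up to sign, a square root of the trace-form discriminant of the basis $\{p_\sigma\}_{\sigma\in\mathfrak S_n}$. Nonvanishing of this discriminant will follow from the standard fact that the trace pairing is nondegenerate on any finite separable field extension.

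First, I would verify that $\{p_\sigma\}_{\sigma\in\mathfrak S_n}$ remains a basis after passage to the fraction field, i.e., that it is a $\mb C(\bl z)^{\mathfrak S_n}$-basis of $\mb C(\bl z)$. Since $\mb C[\bl z]$ is a free $\mb C[\bl z]^{\mathfrak S_n}$-module of rank $n!$ (the result of \cite[Theorem 1]{Y} invoked just above), and since every element of $\mb C(\bl z)$ can be rewritten with a symmetric denominator by multiplying numerator and denominator by $\prod_{\tau\neq e}(h\circ \tau)$, one has $\mb C(\bl z) \cong \mb C[\bl z]\otimes_{\mb C[\bl z]^{\mathfrak S_n}}\mb C(\bl z)^{\mathfrak S_n}$. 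Tensoring the hypothesized basis with the fraction field then produces a basis of $\mb C(\bl z)$ over $\mb C(\bl z)^{\mathfrak S_n}$ of the correct cardinality $n!$.

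Next, setting $A(\bl w):=\bigl(p_\sigma(\bl w_\tau)\bigr)_{\sigma,\tau\in\mathfrak S_n}$, a direct computation using the standard formula $\operatorname{Tr}_{\mb C(\bl z)/\mb C(\bl z)^{\mathfrak S_n}}(f) = \sum_{\tau\in\mathfrak S_n} f\circ\tau$ for the Galois trace yields
$$
\bigl(A(\bl w)\, A(\bl w)^{t}\bigr)_{\sigma,\sigma'} \;=\; \sum_{\tau\in\mathfrak S_n} p_\sigma(\tau\cdot\bl w)\, p_{\sigma'}(\tau\cdot\bl w) \;=\; \operatorname{Tr}(p_\sigma p_{\sigma'})(\bl w).
$$
Taking determinants gives $\bigl(\det A(\bl w)\bigr)^{2} = \det\!\bigl(\operatorname{Tr}(p_\sigma p_{\sigma'})\bigr)_{\sigma,\sigma'}$, which is precisely the discriminant of the basis $\{p_\sigma\}$ with respect to the trace pairing.

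Since the extension is separable (we are in characteristic zero), the trace form is nondegenerate, so this discriminant is a nonzero element of $\mb C(\bl z)^{\mathfrak S_n}$. Hence $\det A(\bl w)$ is not identically zero as a polynomial on $\mb C^n$, proving the lemma. The only mildly delicate step in this outline is the passage from a basis of the polynomial ring to a basis of the fraction field; once that is settled, the remainder is classical Galois theory applied to the standard $\mathfrak S_n$-action.
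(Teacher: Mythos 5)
Your proof is correct. The first half --- reducing to showing that $\{p_\sigma\}$ is a basis of $\mb C(\bl z)$ over $\mb C(\bl z)^{\mathfrak S_n}$ by symmetrizing denominators --- is exactly the argument in the paper. Where you diverge is the final step: the paper cites a lemma from Galois theory (a consequence of Dedekind's independence of characters) asserting directly that the matrix $\big(g_i(e_j)\big)$ of Galois conjugates of a field basis is invertible, whereas you derive this invertibility by computing $A(\bl w)A(\bl w)^{t}$, recognizing its entries as the Galois traces $\operatorname{Tr}(p_\sigma p_{\sigma'})$, and invoking nondegeneracy of the trace form in a separable extension. The two classical facts are essentially equivalent, and the identity $(\det A)^2=\det\big(\operatorname{Tr}(p_\sigma p_{\sigma'})\big)$ is precisely the standard bridge between them; so this is the same strategy with a more self-contained justification of the key invertibility. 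One small bonus of your route: it exhibits $(\det A)^2$ as a symmetric polynomial, which is consistent with (and gently foreshadows) part (ii) of the next lemma, where $\det A$ is identified as a constant multiple of $\Delta(\bl w)^{n!/2}$. Your reduction to the fraction field is also complete as stated: the $n!$ elements span by the denominator-clearing argument, and since $[\mb C(\bl z):\mb C(\bl z)^{\mathfrak S_n}]=n!$, spanning by a set of the right cardinality already forces it to be a basis.
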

\begin{proof}
 Let $L=\mb C(\bl z)$ denote the field of rational functions and $K=\mb C(\bl z)^{\mathfrak S_n}$ be the field of symmetric rational function. From \cite[Example 2.22]{M}, it is known that $L$ over $K$ is a finite Galois extension with Galois group $\mr{Gal}(L/K)=\mathfrak S_n.$ Let $f\in L$, that is, $f = \frac{p}{q}$ for some polynomials $p$ and $q$. Pick $\tilde {q} = \prod_{\sigma\in\mathfrak S_n}q(\bl z_\sigma)$ and $\tilde p = p\prod_{\sigma\in\mathfrak S_n, \sigma\neq 1}q(\bl z_\sigma).$ Now, $f = \frac{\tilde p}{\tilde q},$ where $\tilde{q}$ is symmetric. Again, since $\{p_\sigma\}_{\sigma\in\mathfrak S_n}$ is a basis for $\mb C[\bl z]$ over the ring $\mb C[\bl z]^{\mathfrak S_n}$, we have $p = \sum_{\sigma\in\mathfrak S_n}p_\sigma h_\sigma$ where $h_\sigma$'s are symmetric polynomial which in turn shows that $f = \sum_{\sigma\in\mathfrak S_n}p_\sigma \frac{h_\sigma}{\tilde q}$. Thus $\{p_\sigma\}_{\sigma\in\mathfrak S_n}$ forms a basis of $L$ over $K$. Now we make use of the following basic result from Galois theory \cite[Lemma 3.4]{C}:

\emph{If $N/F$ is a finite Galois extension with $\mr{Gal}(N/F)=\{g_1,\ldots,g_m\}$ and  $\{e_1,\ldots,e_m\}$ is a $F$-basis of $N$, then $\big(g_1(e_j),\ldots,g_m(e_j)\big)_{j=1}^m$ forms a basis of $F^m/F$.}

Consequently, $\big((p_\sigma\circ\tau^{-1})_{\sigma\in\mathfrak S_n}\big)_{\tau\in\mathfrak S_n}$ is a basis of $L^{n!}/L.$ Hence we have the desired result.
\end{proof}

Recall that the length  of permutation $\sigma\in\mathfrak S_n$ is the number of inversions in $\sigma$ \cite[p.  4]{K}. Here, by an inversion in $\sigma$, we mean a pair $(i, j)$ with $1\leq i < j\leq n$ such that $\sigma(i) > \sigma(j)$. This is the smallest number of transpositions of the form $(i, i+1)$ required to write $\sigma$ as a product of these transpositions.
\begin{lem}\label{det}
Pick a  basis for $\mb C[\bl z]$ over $\mb C[\bl z]^{\mathfrak S_n}$ consisting of homogeneous polynomials $p_\sigma,$ $\sigma\in\mathfrak S_n,$ $\deg p_\sigma=\ell(\sigma).$ Then
\begin{enumerate}
\item[(i)] the determinant $\det\big(\!\!\big(p_{\sigma}(\bl w_\tau)\big)\!\!\big)_{\sigma,\tau\in\mathfrak S_n}$ is a homogeneous polynomial of degree $\frac{n!}{2}\binom{n}{2},$
\item[(ii)]$\det\big(\!\!\big(p_{\sigma}(\bl w_\tau)\big)\!\!\big)_{\sigma,\tau\in\mathfrak S_n}$ is a non-zero constant multiple of $\Delta(\bl w)^{\frac{n!}{2}}.$
\end{enumerate}
\end{lem}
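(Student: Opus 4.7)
\emph{Part (i).} The $(\sigma,\tau)$-entry of the matrix is $p_\sigma(\bl w_\tau)$. Since $p_\sigma$ is homogeneous of degree $\ell(\sigma)$ and $\bl w_\tau$ is merely a reordering of the coordinates of $\bl w$, every entry in row $\sigma$ is homogeneous in $\bl w$ of degree $\ell(\sigma)$. Expanding by the Leibniz formula, each term of $\det\big(\!\!\big(p_\sigma(\bl w_\tau)\big)\!\!\big)$ is a product of one entry from each row and is therefore homogeneous of degree $\sum_{\sigma \in \mathfrak{S}_n} \ell(\sigma)$. To compute this sum, I note that a pair $1\leq i<j \leq n$ contributes an inversion to $\sigma$ exactly when $\sigma(i) > \sigma(j)$, which occurs for $n!/2$ permutations; summing over the $\binom{n}{2}$ pairs gives $\sum_\sigma \ell(\sigma) = \tfrac{n!}{2}\binom{n}{2}$.

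\emph{Part (ii).} The plan is to exhibit $\Delta(\bl w)^{n!/2}$ as a divisor of the determinant and then close the argument by matching degrees. Fix $i<j$ and set $\rho = (i\; j)\in\mathfrak S_n$. The subgroup $\langle\rho\rangle$ partitions $\mathfrak{S}_n$ into $n!/2$ cosets, each of the form $\{\tau, \tau\rho\}$, and I choose a transversal $\mathcal C\subset\mathfrak S_n$. A direct check shows that $\bl w_{\tau\rho}$ is obtained from $\bl w_\tau$ by interchanging the entries at positions $\tau(i)$ and $\tau(j)$; in particular $\bl w_\tau = \bl w_{\tau\rho}$ whenever $w_i = w_j$. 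Consequently for every $\sigma$ and every $\tau\in\mathcal C$ the polynomial $p_\sigma(\bl w_{\tau\rho}) - p_\sigma(\bl w_\tau) \in \mb C[\bl w]$ vanishes on the hyperplane $\{w_i=w_j\}$, hence is divisible by the irreducible polynomial $w_i-w_j$. I then perform the $n!/2$ column operations $c_{\tau\rho} \mapsto c_{\tau\rho} - c_\tau$ for $\tau\in\mathcal C$; these do not change the determinant, and factoring $w_i - w_j$ out of each modified column shows that $(w_i-w_j)^{n!/2}$ divides $\det\big(\!\!\big(p_\sigma(\bl w_\tau)\big)\!\!\big)$.

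Since $\mb C[\bl w]$ is a UFD and the linear polynomials $w_i - w_j$ for distinct pairs $i<j$ are pairwise coprime, the individual divisibilities combine to give $\Delta(\bl w)^{n!/2} \mid \det\big(\!\!\big(p_\sigma(\bl w_\tau)\big)\!\!\big)$. By part (i) the determinant has degree $\tfrac{n!}{2}\binom{n}{2}$, which is exactly the degree of $\Delta(\bl w)^{n!/2}$, so $\det\big(\!\!\big(p_\sigma(\bl w_\tau)\big)\!\!\big) = c\,\Delta(\bl w)^{n!/2}$ for some $c\in\mb C$, and Lemma \ref{galois} forces $c\neq 0$. The substantive step, and the one I expect to be the main obstacle, is upgrading the visible simple vanishing along $\{w_i=w_j\}$ to order $n!/2$ vanishing; the coset-pairing column operation is the decisive device that converts the $\mathfrak S_n$-symmetry of the setup into this quantitative divisibility statement, after which degree matching and Lemma \ref{galois} finish the proof immediately.
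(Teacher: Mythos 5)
Your proof is correct and follows essentially the same route as the paper: the same degree count in (i), and in (ii) the same device of pairing each column indexed by $\tau$ with the one indexed by $\tau(i,j)$, subtracting to extract $(w_i-w_j)^{n!/2}$, then combining over all pairs via unique factorization and finishing by degree matching together with Lemma \ref{galois}. The only (welcome) deviation is in part (i), where you evaluate $\sum_{\sigma}\ell(\sigma)$ by the direct symmetry count $\#\{\sigma:\sigma(i)>\sigma(j)\}=n!/2$ per pair, rather than differentiating the generating function $\sum_k I_n(k)z^k=\prod_{i=1}^{n-1}(1+z+\cdots+z^i)$ as the paper does; this is a simpler and equally valid computation.
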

\begin{proof}
Clearly,
\Bea
\det\big(\!\!\big(p_{\sigma}(\bl w_\tau)\big)\!\!\big)_{\sigma,\tau\in\mathfrak S_n}=\sum_{\nu\in\mathfrak S_{n!}}\prod_{\sigma\in\mathfrak S_n}p_{\sigma}(\bl w_{\nu\sigma}).
\Eea
We note that
\Bea
\deg \prod_{\sigma\in\mathfrak S_n}p_{\sigma}(\bl w_{\nu\sigma})=\sum_{\sigma\in\mathfrak S_n}\deg p_{\sigma}(\bl w)=\sum_{\sigma\in\mathfrak S_n}\deg p_{\sigma}=\sum_{\sigma\in\mathfrak S_n}\ell(\sigma).
\Eea
Let $I_n(k)$ denote the number of $k$-inversions in $\mathfrak S_n$ \cite[p.  1]{Ma}. Alternatively, $I_n(k)=\mr{card}\{\sigma\in\mathfrak S_n\mid \ell(\sigma)=k\}.$
Note that
\Bea
\sum_{\sigma\in\mathfrak S_n}\ell(\sigma)=\sum_{k=1}^{\binom{n}{2}}\sum_{\ell(\sigma)=k}\ell(\sigma)=
\sum_{k=1}^{\binom{n}{2}}kI_n(k).
\Eea
The generating function formula for $I_n(k)$ is given by \cite[Theorem 1]{Ma}
\Bea
\sum_{k=1}^{\binom{n}{2}}I_n(k)z^k=\prod_{i=1}^{n-1}\sum_{j=0}^iz^j.
\Eea
Differentiating with respect to $z,$ we obtain
\Bea
\sum_{k=1}^{\binom{n}{2}}kI_n(k)z^{k-1}=\sum_{i=1}^{n-1}(1+\ldots+iz^{i-1})\prod_{j=1,j\neq i}^{r-1}(1+\ldots+z^j).
\Eea
Putting $z=1,$ we have
\Bea
\sum_{k=1}^{\binom{n}{2}}kI_n(k)=\sum_{i=1}^{n-1}\frac{i(i+1)}{2} \prod_{j=1, j\neq i}^{n - 1}(j+1) = \frac{n!}{2}\sum_{i=1}^{n-1}i=\frac{n!}{2}\binom{n}{2}.
\Eea
This proves part (i). For part (ii), let us choose $i, j$ with $1\leq i< j\leq n$. Consider the automorphism of $\mathfrak S_n$ given by $\tau\mapsto \tau (i, j)$, where $(i, j)$ is the transposition. This automorphism  maps an even permutation to an  odd permutation and vice versa. For any polynomial $p,$ clearly, $p (\bl z_{\tau}) = \sum_{m,n} a_{mn}({\bl z}^\i) z_i^mz_j^n\in\C[\bl z],$ where each $a_{mn}({\bl z}^\i)$ is a polynomial in the variables $z_1, \ldots, z_{i -1},z_{i + 1}, \ldots, z_{j -1},z_{j + 1}, \ldots, z_n$. Thus $p(\bl w_{\tau}) - p(\bl w_{\tau (i,j)}) = \sum_{m,n} a_{mn}({\bl w}^\i) (w_i^mw_j^n - w_j^mw_i^n)$  is divisible by $w_i - w_j$. Thus  for each even permutation $\tau$, if we subtract the $\tau(i,j)$-th column $\big(p_\sigma(\bl w_{\tau(i,j)}) \big)_{\sigma\in\mathfrak S_n}$ from $\tau$-th column $(p_\sigma(\bl w_{\tau}) )_{\sigma\in\mathfrak S_n}$, the determinant does not change.   Consequently, we see that $w_i - w_j$ is a factor of the determinant. Since we have exactly $\frac{n!}{2}$  even permutations in $\mathfrak S_n,$ it follows that $(w_i - w_j)^\frac{n!}{2}$ must divide the determinant. This is true for every pair of $i<j$ and $\C[\bl z]$ is a unique factorization domain.  Hence $\Delta(\bl w)^{\frac{n!}{2}}$ divides the determinant. From part (i) and Lemma \ref{galois}, we see that the degree of the polynomial $\Delta(\bl w)^{\frac{n!}{2}}$ is equal to $\frac{n!}{2}\binom{n}{2}$ completing the proof of  part (ii).
\end{proof}
\begin{rem}
The degree of the polynomials in a basis consisting of the Descent polynomials \cite[p.  6]{A} or the Schubert polynomials \cite[Theorem 2.16]{K}, meet the hypothesis made in Lemma \ref{det}.
\end{rem}
\begin{lem} \label{lem2.8}Let $\m H$ be a Hilbert module over $\mb C[\bl z]$ consisting of holomorphic functions defined on the polydisc $\mb D^n$ possessing a reproducing kernel, say $K.$ Assume that $\mb C[\bl z]$ is dense in $\mathcal H.$  If $v$ is in $\cap_{i=1}^n \ker \big(M_{s_i}-s_i(\bl w)\big)^*, \bl w\in\mb D^n\setminus\m Z,$ then there exists unique tuple $(c_\sigma)_{\sigma \in\mathfrak S_n},$ such that $v=\sum c_\sigma K(\cdot,\bl w_{\sigma}).$
\end{lem}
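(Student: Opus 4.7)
The strategy is to construct the coefficients $c_\sigma$ by solving an explicit $n!\times n!$ linear system, and then to verify the resulting identity by testing against the dense subspace $\mb C[\bl z]\subset \m H$. First, observe that the vectors $K(\cdot,\bl w_\sigma)$, $\sigma\in\mathfrak S_n$, are themselves candidates to live in the joint kernel: since each $s_i$ is symmetric, $s_i(\bl w_\sigma)=s_i(\bl w)$, and by the lemma preceding the statement, $M_{s_i}^*K(\cdot,\bl w_\sigma)=\overline{s_i(\bl w)}K(\cdot,\bl w_\sigma)$. Thus $\sum_\sigma c_\sigma K(\cdot,\bl w_\sigma)$ always lies in the joint kernel; the content of the lemma is that every element of the joint kernel arises this way.

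Next, I would fix a basis $\{p_\sigma\}_{\sigma\in\mathfrak S_n}$ of $\mb C[\bl z]$ as a free module of rank $n!$ over $\mb C[\bl z]^{\mathfrak S_n}$, as guaranteed by \cite{Y}, and moreover take it to satisfy the homogeneity hypothesis of Lemma \ref{det}. Given any $p\in\mb C[\bl z]$, write $p=\sum_\tau q_\tau p_\tau$ with $q_\tau\in\mb C[\bl z]^{\mathfrak S_n}=\mb C[s_1,\ldots,s_n]$. Since $q_\tau$ is a polynomial in $s_1,\ldots,s_n$ and $v$ is a joint eigenvector of $M_{s_1}^*,\ldots,M_{s_n}^*$ with eigenvalues $\overline{s_1(\bl w)},\ldots,\overline{s_n(\bl w)}$, one has $M_{q_\tau}^*v=\overline{q_\tau(\bl w)}\,v$, whence
\[
\langle p,v\rangle=\sum_\tau\langle p_\tau,M_{q_\tau}^*v\rangle=\sum_\tau q_\tau(\bl w)\langle p_\tau,v\rangle.
\]
An analogous computation, using $q_\tau(\bl w_\sigma)=q_\tau(\bl w)$, gives
\[
\Big\langle p,\sum_\sigma c_\sigma K(\cdot,\bl w_\sigma)\Big\rangle=\sum_\tau q_\tau(\bl w)\sum_\sigma\overline{c_\sigma}\,p_\tau(\bl w_\sigma).
\]

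Equating these two expressions and demanding the identity for all $p\in\mb C[\bl z]$ forces, for each $\tau\in\mathfrak S_n$, the equation
\[
\sum_\sigma p_\tau(\bl w_\sigma)\,\overline{c_\sigma}=\langle p_\tau,v\rangle.
\]
This is an $n!\times n!$ linear system with coefficient matrix $\bigl(p_\tau(\bl w_\sigma)\bigr)_{\tau,\sigma}$. By Lemma \ref{det}(ii) its determinant is a non-zero constant multiple of $\Delta(\bl w)^{n!/2}$, which is non-zero precisely because $\bl w\in\mb D^n\setminus\m Z$. Consequently the system has a unique solution $(c_\sigma)_{\sigma\in\mathfrak S_n}$.

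Finally, with these $c_\sigma$ fixed, the two expressions above coincide for every polynomial $p$, i.e.\ $v-\sum_\sigma c_\sigma K(\cdot,\bl w_\sigma)$ is orthogonal to $\mb C[\bl z]$, hence vanishes by the density hypothesis. Uniqueness of the $c_\sigma$ follows from the same non-vanishing determinant, which also shows that the $n!$ vectors $\{K(\cdot,\bl w_\sigma)\}_{\sigma\in\mathfrak S_n}$ are linearly independent. The only real obstacle is the linear-algebraic one, already settled by Lemmas \ref{galois} and \ref{det}: once a polynomial basis of $\mb C[\bl z]$ over $\mb C[\bl z]^{\mathfrak S_n}$ is at hand whose evaluation matrix on $\{\bl w_\sigma\}$ is invertible away from $\m Z$, the rest is a bookkeeping argument that translates the joint-eigenvector condition into an explicit expansion in terms of the reproducing kernel.
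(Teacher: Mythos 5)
Your proposal is correct and follows essentially the same route as the paper: reduce the identity $v=\sum_\sigma c_\sigma K(\cdot,\bl w_\sigma)$ to testing against the dense subspace $\mb C[\bl z]$, use a homogeneous free basis $\{p_\tau\}$ of $\mb C[\bl z]$ over $\mb C[\bl z]^{\mathfrak S_n}$ together with the joint-eigenvector property $M_{h}^*v=\overline{h(\bl w)}v$ for symmetric $h$ to collapse the condition to the $n!\times n!$ system $\sum_\sigma p_\tau(\bl w_\sigma)\overline{c_\sigma}=\langle p_\tau,v\rangle$, and invoke Lemma \ref{det}(ii) to see that its determinant is a non-zero multiple of $\Delta(\bl w)^{n!/2}$, hence invertible off $\m Z$. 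The only (cosmetic) difference is that you phrase everything with the arguments of the inner product swapped relative to the paper.
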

\begin{proof} Clearly, $M_{s_i}^* K(\cdot, w_\sigma) = \overline{s_i(w_\sigma)} K(\cdot, w_\sigma) = \overline{s_i(w)} K(\cdot, w_\sigma).$
To complete the proof, given a joint eigenvector $v,$ it is enough to ensure the existence of a unique tuple $(c_\sigma)_{\sigma\in \mathfrak S_n}$ of complex numbers such that
$$
\langle v, p\rangle = \langle \sum_{\sigma\in\mathfrak S_n} c_\sigma K(\cdot,\bl w_{\sigma}), p\rangle = \sum_{\sigma\in\mathfrak S_n}  c_\sigma \ov {p(\bl w_\sigma)},
$$ for all polynomials $p$ since $\mathbb C[\bl z]$ is dense in the  Hilbert module $\mathcal H$.
In particular, if there exists a unique solution  for some choice of a basis, say $\{p_\tau\}_{\tau\in\mathfrak S_n}$, of $\mb C[\bl z]$ over the ring $\mb C[\bl z]^{\mathfrak S_n}$, then for any $p = \sum_{\tau\in\mathfrak S_n}p_\tau h_\tau \in\C[\bl z]$, we have
\beqa
\langle v, p\rangle &=& \langle v, \sum_{\tau\in\mathfrak S_n}p_\tau h_\tau\rangle = \sum_{\tau\in\mathfrak S_n}\langle M_{h_\tau}^*v, p_\tau\rangle = \sum_{\tau\in\mathfrak S_n}\ov{h_\tau(\bl w)}\langle v, p_\tau\rangle \\ &=& \sum_{\tau\in\mathfrak S_n}\ov{h_\tau(\bl w)}\sum_{\sigma\in\mathfrak S_n}  c_\sigma \ov {p_\tau(\bl w_\sigma)} = \sum_{\sigma\in\mathfrak S_n} c_\sigma  \sum_{\tau\in\mathfrak S_n}\ov{h_\tau(\bl w_\sigma)}\ov {p_\tau(\bl w_\sigma)} \\&=& \sum_{\sigma\in\mathfrak S_n}  c_\sigma \ov {p(\bl w_\sigma)}.
\eeqa
Thus choosing  $\{p_\tau\}_{\tau\in\mathfrak S_n}$ as in the hypothesis of Lemma \ref{det} and using part (ii) of that Lemma, we have a unique solution $(c_\sigma)_{\sigma \in \mathfrak S_n}$ for the system of equations $$\inner{v}{p_\tau}=\sum_{\sigma\in\mathfrak S_n}c_\sigma \ov{p_\tau(\bl w_\sigma)}$$
as long as $\bl w$ is from $\mb D^n\setminus\m Z. $
\end{proof}
As a consequence of the Lemma we have just proved, we see  that the set of vectors $\{K_{\bl w_\sigma}\mid \sigma\in\mathfrak S_n\}$ are both  linearly independent and spanning for the joint kernel $\cap_{i=1}^n \ker \big(M_{s_i}-s_i(\bl w)\big)^*, \bl w\in\mb D^n\setminus\m Z.$ Therefore, we have the following Corollary.

\begin{cor}\label{dim}Let $\m H$ be a Hilbert module over $\mb C[\bl z]$ consisting of holomorphic functions defined on the polydisc $\mb D^n$ possessing a reproducing kernel, say $K.$ Assume that $\mb C[\bl z]$ is dense in $\mathcal H.$ Then $\dim \cap_{i=1}^n \ker \big(M_{s_i}-s_i(\bl w)\big)^*= n!.$
\end{cor}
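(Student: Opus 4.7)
The plan is to deduce the corollary directly from Lemma \ref{lem2.8}, by separately verifying that the vectors $\{K(\cdot, \bl w_\sigma) : \sigma \in \mathfrak S_n\}$ both span and are linearly independent in the joint kernel; the count $n!$ then follows immediately.

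First I would check containment: for each $\sigma \in \mathfrak S_n$, Lemma 2.6 gives $M_{s_i}^* K(\cdot, \bl w_\sigma) = \overline{s_i(\bl w)}\,K(\cdot, \bl w_\sigma)$, where I use that $s_i$ is a symmetric polynomial so that $s_i(\bl w_\sigma) = s_i(\bl w)$. Hence each $K(\cdot, \bl w_\sigma)$ lies in the joint kernel $\bigcap_{i=1}^n \ker\bigl(M_{s_i} - s_i(\bl w)\bigr)^*$. Next, Lemma \ref{lem2.8} says that every element $v$ of the joint kernel has the form $v = \sum_{\sigma \in \mathfrak S_n} c_\sigma K(\cdot, \bl w_\sigma)$. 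This gives the spanning statement, so $\dim \bigcap_{i=1}^n \ker\bigl(M_{s_i} - s_i(\bl w)\bigr)^* \le n!$.

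The only thing left is linear independence of $\{K(\cdot, \bl w_\sigma)\}_{\sigma \in \mathfrak S_n}$. For this, I would fix the homogeneous basis $\{p_\tau\}_{\tau \in \mathfrak S_n}$ of $\mb C[\bl z]$ over $\mb C[\bl z]^{\mathfrak S_n}$ supplied by Lemma \ref{det} and suppose $\sum_\sigma c_\sigma K(\cdot, \bl w_\sigma) = 0$. Pairing with $p_\tau$ and using the reproducing property yields the linear system
\[
\sum_{\sigma \in \mathfrak S_n} c_\sigma \,\overline{p_\tau(\bl w_\sigma)} = 0, \qquad \tau \in \mathfrak S_n.
\]
The coefficient matrix has determinant $\overline{\det\bigl(\!\bigl(p_\tau(\bl w_\sigma)\bigr)\!\bigr)}$, which by Lemma \ref{det}(ii) is a non-zero constant multiple of $\overline{\Delta(\bl w)^{n!/2}}$. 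Since $\bl w \in \mb D^n \setminus \m Z$, we have $\Delta(\bl w) \ne 0$, so the system forces $c_\sigma = 0$ for all $\sigma$. Combined with the spanning fact from Lemma \ref{lem2.8}, this yields exactly $\dim \bigcap_{i=1}^n \ker\bigl(M_{s_i} - s_i(\bl w)\bigr)^* = n!$.

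The argument is essentially bookkeeping — no new technical obstacle beyond what was already handled in Lemmas \ref{det} and \ref{lem2.8}. The only point that deserves care is emphasizing that the hypothesis $\bl w \notin \m Z$ is used precisely to ensure $\Delta(\bl w) \ne 0$, which is what guarantees both the uniqueness in Lemma \ref{lem2.8} and the linear independence argument above; off this set, the joint kernel could drop in dimension.
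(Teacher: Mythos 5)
Your proof is correct and takes essentially the same route as the paper: the paper also obtains the spanning property from the existence part of Lemma \ref{lem2.8} and linear independence from its uniqueness part, which rests on the very non-vanishing of $\det\big(\!\!\big(p_{\tau}(\bl w_\sigma)\big)\!\!\big)$ for $\bl w\in\mb D^n\setminus\m Z$ from Lemma \ref{det}(ii) that you invoke directly. The only difference is that you unpack that uniqueness argument explicitly instead of quoting it.
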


To complete the proof of Theorem \ref{free}, we need to relate the joint kernel $\cap_{i=1}^n \ker \big(M_{s_i}-s_i(\bl w)\big)^*$ to the module tensor product $\m H\otimes_{\mb C[\bl z]^{\mathfrak S_n}}\mb C_{\bl w}.$ The following Lemma gives an isomorphism between these two.
A special case of \cite[Lemma 5.11]{DP}, included in the Lemma below, is 
used in proving a  generalization of Theorem \ref{free} to $\mb P_{\bl p}\big(\mb A^{(\l)}(\mb D^n) \big).$

\begin{lem}\label{dp}
If $\m H$ is a Hilbert module over $\mb C[\bl z]$ consisting of holomorphic functions defined on some bounded domain $\Omega \subseteq \mb C^n,$ then we have
\begin{enumerate}
\item $\m H \otimes_{\mb C[\bl z]}\mb C_{\bl w}\cong\cap_{p\in\mb C[\bl z]}\ker M^*_{p - p(\bl w)};$
\item $\m H\otimes_{\mb C[\bl z]^{\mathfrak S_n}}\mb C_{\bl w}\cong\cap_{i=1}^n \ker \big(M_{s_i}-s_i(\bl w)\big)^*;$
\item $p_1\otimes_{\mb C[\bl z]^{\mathfrak S_n}}1_{\bl w},\ldots, p_t\otimes_{\mb C[\bl z]^{\mathfrak S_n}}1_{\bl w}$ spans $\m H\otimes_{\mb C[\bl z]^{\mathfrak S_n}}\mb C_{\bl w},$ for any set of generators $p_1,\ldots,p_t$ for $\m H$ over $\mb C[\bl z]^{\mathfrak S_n}.$

\end{enumerate}
\end{lem}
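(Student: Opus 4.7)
The plan is to use a single basic principle: for any subring $R \subseteq \mb C[\bl z]$ acting on $\m H$, the module tensor product $\m H \otimes_R \mb C_{\bl w}$, realised as a Hilbert space quotient $\m H / \overline{\mathcal N_R}$ where $\mathcal N_R := \mathrm{span}\{(p - p(\bl w))f : p \in R,\, f \in \m H\}$, is canonically isometric to $\mathcal N_R^{\perp}$. A vector $v$ lies in $\mathcal N_R^{\perp}$ if and only if $\langle v, (p - p(\bl w))f\rangle = 0$ for every $p \in R$ and every $f \in \m H$, that is, $M^{*}_{p - p(\bl w)} v = 0$ for every $p \in R$. Specialising to $R = \mb C[\bl z]$ yields part (1) immediately, while specialising to $R = \mb C[\bl z]^{\mathfrak S_n}$ identifies the tensor product in (2) with $\cap_{p \in \mb C[\bl z]^{\mathfrak S_n}} \ker M^{*}_{p - p(\bl w)}$.

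For part (2) it then remains to identify this common kernel with $\cap_{i=1}^n \ker (M_{s_i} - s_i(\bl w))^{*}$. Since each $s_i$ lies in $\mb C[\bl z]^{\mathfrak S_n}$, one inclusion is immediate. The reverse inclusion rests on the algebraic identity
$$
p(\bl z) - p(\bl w) \,=\, \sum_{i=1}^n \big(s_i(\bl z) - s_i(\bl w)\big)\, g_i(\bl z), \qquad g_i \in \mb C[\bl z],
$$
valid for every $p \in \mb C[\bl z]^{\mathfrak S_n}$. I would derive this by writing $p = Q \circ \bl s$ with $Q \in \mb C[u_1, \ldots, u_n]$ and applying the standard telescoping factorisation $Q(\bl u) - Q(\bl v) = \sum_i (u_i - v_i)\tilde g_i(\bl u, \bl v)$, evaluated at $\bl u = \bl s(\bl z)$ and $\bl v = \bl s(\bl w)$. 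Given the identity, for $v \in \cap_{i=1}^n \ker(M_{s_i} - s_i(\bl w))^{*}$ and $f \in \m H$ the inner product $\langle v, (p - p(\bl w))f\rangle$ splits as $\sum_i \langle (M_{s_i} - s_i(\bl w))^{*} v, g_i f\rangle$, which vanishes term by term because $g_i f \in \m H$ by the $\mb C[\bl z]$-module structure on $\m H$. This is the only substantive step, and I do not expect it to pose difficulties beyond the bookkeeping just described.

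For part (3), combining part (2) with Corollary \ref{dim} shows that $\m H \otimes_{\mb C[\bl z]^{\mathfrak S_n}} \mb C_{\bl w}$ is finite-dimensional. Let $\pi : \m H \to \m H \otimes_{\mb C[\bl z]^{\mathfrak S_n}} \mb C_{\bl w}$ denote the canonical quotient map. If $p_1, \ldots, p_t$ generate $\m H$ over $\mb C[\bl z]^{\mathfrak S_n}$, then for each $h \in \m H$ there exist symmetric polynomials $q_i^{(k)}$ with $\sum_i q_i^{(k)} p_i \to h$ in $\m H$; applying the continuous map $\pi$ and using $\pi(q_i^{(k)} p_i) = q_i^{(k)}(\bl w)\, \pi(p_i)$, I conclude $\pi(h) = \lim_k \sum_i q_i^{(k)}(\bl w) \pi(p_i)$, placing $\pi(h)$ in the closed linear span of $\{\pi(p_i)\}_{i=1}^{t}$. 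Since vectors of the form $\pi(h)$, $h \in \m H$, visibly span the entire quotient, the finitely many images $\pi(p_i) = p_i \otimes_{\mb C[\bl z]^{\mathfrak S_n}} 1_{\bl w}$ do so as well, the finite-dimensionality making their linear span automatically closed and therefore equal to the whole space.
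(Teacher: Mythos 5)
Your proposal is correct and follows essentially the same route as the paper: identify the module tensor product with the orthocomplement of $\mathcal N_R$, reduce part (2) to showing that the joint kernel of the $(M_{s_i}-s_i(\bl w))^*$ annihilates $(p-p(\bl w))f$ for all symmetric $p$, and prove (3) by pushing the dense subspace $p_1\mb C[\bl z]^{\mathfrak S_n}+\cdots+p_t\mb C[\bl z]^{\mathfrak S_n}$ through the continuous surjection onto the quotient. The only differences are cosmetic: you establish the reverse inclusion in (2) via the telescoping factorisation $p-p(\bl w)=\sum_i(s_i-s_i(\bl w))g_i$ where the paper instead applies $q(M_{s_1},\ldots,M_{s_n})^*$ to a joint eigenvector, and your appeal to Corollary \ref{dim} in (3) is unnecessary (and would impose extra hypotheses), since the span of the $t$ vectors $\pi(p_i)$ is finite-dimensional and closed regardless.
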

\begin{proof}
We have to show that
$\m H\otimes_{\mb C[\bl z]}\mb C_{\bl w} = \cap_{p\in\mb C[\bl z]}\ker M^*_{p - p(\bl w)}.$
Recall that $\m H\otimes_{\mb C[\bl z]}\mb C_{\bl w}$  is the ortho-complement of the subspace $\m N = \{(p - p(\bl w))f : p\in\mb C[\bl z], f\in\m H\}$ in $\m H\otimes \mb C.$ Therefore, we have
$$
g\in \m N^\perp \iff \inner{g}{\big(p - p(\bl w)\big)f} = 0 \mbox{~for~all~}  p\in\mb C[\bl z], f\in\m H \iff M^*_{\big(p - p(\bl w)\big)}g = 0, \, p\in\mb C[\bl z].
$$

Similarly, $\cap_{p\in\C[\bl z]^{\mathfrak S_n}}\ker M^*_{p - p(\bl w)} \subseteq \cap_{i=1}^n \ker \big(M_{s_i}-s_i(\bl w)\big)^*$. Also, if $f\in \cap_{i=1}^n \ker \big(M_{s_i}-s_i(\bl w)\big)^*$, then $M_{s_i}^*f = \ov{s_i(\bl w)}f,\, 1\leq i\leq n.$ Since $p - p(\bl w)$ is a symmetric polynomial, the existence of a polynomial $q$ such that $p - p(\bl w) = q\circ\bl s$ follows. Thus
$$
M^*_{q\circ\bl s}f = {q(M_{s_1},\ldots, M_{s_n})}^*f = \ov{q(\bl s(\bl w))}f = 0.
$$

To prove the last statement, consider the map $Q: \m H\ra \m H\otimes_{\mb C[\bl z]^{\mathfrak S_n}}\C_{\bl w}$ defined by $Qf = f\otimes_{\mb C[\bl z]^{\mathfrak S_n}}1_{\bl w}$. Note that $Q$ is  the composition of a unitary map from $\m H$ to $\m H\otimes\C$ followed by the quotient map, hence it is onto and $\|Q\|\leq 1.$ Since $p_1\mb C[\bl z]^{\mathfrak S_n}+\cdots+p_t\mb C[\bl z]^{\mathfrak S_n}$ is dense in $\m H$, it follows that  $Q(p_1\mb C[\bl z]^{\mathfrak S_n}+\cdots+p_t\mb C[\bl z]^{\mathfrak S_n})$ is dense in $\m H\otimes_{\mb C[\bl z]^{\mathfrak S_n}}\C_{\bl w}$. Now for any
$\sum_{i=1}^tp_if_i\in\m H$, where $f_i$'s are in $\mb C[\bl z]^{\mathfrak S_n}$, we have
$$
Q\big(\sum_{i=1}^tp_if_i\big) = \big(\sum_{i=1}^tp_if_i\big)\otimes_{\mb C[\bl z]^{\mathfrak S_n}}1_{\bl w} = \sum_{i=1}^tp_i\otimes_{\mb C[\bl z]^{\mathfrak S_n}}f_i\cdot1_{\bl w} = \sum_{i=1}^tf_i(\bl w)p_i\otimes_{\mb C[\bl z]^{\mathfrak S_n}}1_{\bl w}.
$$
Therefore, $Q(p_1\mb C[\bl z]^{\mathfrak S_n}+\cdots+p_t\mb C[\bl z]^{\mathfrak S_n})$ is finite dimensional and hence $\m H\otimes_{\mb C[\bl z]^{\mathfrak S_n}}\C_{\bl w}$ is finite dimensional and is spanned by $p_1\otimes_{\mb C[\bl z]^{\mathfrak S_n}}1_{\bl w},\ldots, p_t\otimes_{\mb C[\bl z]^{\mathfrak S_n}}1_{\bl w}$. \end{proof}

\begin{proof}[Proof of Theorem \ref{free}]
Using Corollary \ref{dim}, we show that the  map $\mathpzc t:\bl u\mapsto \mr{span}\{K^{(\l)}_{\bl w}\mid \bl w\in\bl s^{-1}(\bl u)\}$ taking values in the Grassmannian ${\rm Gr}\big(n!,\mb A^{(\l)}(\mb D^n)\big)$ of the Hilbert space $\mathbb A^{(\l)}(\mathbb D^n)$ of rank $n!$ is anti-holomorphic.
Given any $\bl u_0,$ fixed but arbitrary, in $\mb G_n\setminus \bl s(\m Z),$ there exists a neighborhood of $\bl u_0,$ say $U,$ on which $\bl s$ admits $n!$ local inverses. Enumerate them as $\v_1,\ldots,\v_{n!}.$ Then the linearly independent set
$$\big \{\g_i: \g_{i}(\bl u) = K^{(\l)}\big(\cdot, \v_i(\bl u)\big), u\in U\big \}_{i = 1}^{n!}$$ of anti-holomorphic $\mb A^{(\l)}(\mb D^n)$-valued functions spans the joint kernel $\cap_{i=1}^n \ker \big(M_{s_i}-s_i(\bl w)\big)^*.$
\end{proof}


\begin{rem} \label{sharp} We give a realization of $\mb A^{(\l)}(\mb D^n)$ as a space of holomorphic functions on an open subset of $\big(\mathbb G_n\setminus\bl s(\mathcal Z)\big)^*$ possessing a sharp (reproducing) kernel \cite[Definition 2.1]{AS},
we mimic here the construction in \cite[Remark 2.6]{CS}. Define $\Gamma:\mb A^{(\l)}(\mb D^n)\to\m O(U^*)$ by $\big(\Gamma(f)(\bar{\bl v})\big)_i=\inner{f}{\g_i(\bar{\bl v})}.$ Let $\m H=\Gamma\big(\mb A^{(\l)}(\mb D^n)\big)\subset\m O(U^*).$ Let $\langle \Gamma f, \Gamma g\rangle = \langle f , g \rangle,$ $f,g \in  \mb A^{(\l)}(\mb D^n).$ Equipped with this inner product, $\m H$ is a  Hilbert space. Now, by definition $\Gamma$ is a unitary from $\mb A^{(\l)}(\mb D^n)$ to $\m H.$ Define $K_\Gamma:U^*\times U^*\to M_{n!}(\mb C)$ by $\inner{K_\Gamma(\bar{\bl u},\bar{\bl v})e_j}{e_i}=\inner{\g_j(\bar{\bl v})}{\g_i(\bar{\bl u})} = \langle \Gamma\big(\g_j(\bar {\bl v})\big)(\bar{\bl u}), e_i\rangle$, that is, $K_\Gamma(\cdot,\bar{\bl v})e_j = \Gamma\big(\g_j(\bar {\bl v})\big)$. The string of equalities
\Bea
\inner{\Gamma f}{K_\Gamma(\cdot, \bar{\bl v})e_j} =  \inner{\Gamma f}{\Gamma\big(\g_j(\bar {\bl v})\big)} = \inner{f}{\g_j(\bar {\bl v})} = \inner{\Gamma(f)(\bar{\bl v})}{e_j}
\Eea
shows that $\m H$ is a reproducing kernel Hilbert space with $K_\Gamma$ as reproducing kernel. Note that
\beqa
\inner{M_i^*K_\Gamma(\cdot, \bar{\bl v})e_j}{\Gamma f} = \inner{K_\Gamma(\cdot, \bar{\bl v})e_j}{\bl u_i\Gamma f} = \ov{\inner{\bar{\bl v_i}\Gamma f(\bl v)}{e_j}} = \inner{\bl v_iK_\Gamma(\cdot, \bar{\bl v})e_j}{\Gamma f},
\eeqa
that is, $M_i^*K_\Gamma(\cdot, \bar{\bl v})e_j = \bl v_iK_\Gamma(\cdot, \bar{\bl v})e_j.$ Thus
\beqa
\Gamma M_{s_i}^* K^{(\l)}\big(\cdot, \phi_j(\bar{\bl v})\big) &=& \Gamma \{\ov{s_i\big(\phi_j(\bar{\bl v})\big)}K\big(\cdot, \phi_j(\bar{\bl v})\big)\} = \bl v_i\Gamma K\big(\cdot, \phi_j(\bar{\bl v})\big)  = \bl v_i\Gamma\big(\g_j(\bar {\bl v})\big)\\&=& \bl v_iK_\Gamma(\cdot, \bar{\bl v})e_j = M_i^*K_\Gamma(\cdot, \bar{\bl v})e_j \\&=& M_i^*\Gamma K^{(\l)}\big(\cdot, \phi_j(\bar{\bl v})\big).
\eeqa
Since the linear span $K(\cdot, \bl w),$ $\bl w \in U,$ where $U \subseteq \mb D^n$ is any small open set, is dense in $\mb A^{(\l)}(\mb D^n),$ it follows that $\Gamma M_{s_i}^*=M_i^*\Gamma.$ Consequently, $\Gamma$ is a module isomorphism between $\mb A^{(\l)}(\mb D^n)$ and $\m H.$ So $\Gamma$ is a unitary map from $\cap_{i=1}^n \ker \big(M_{s_i}-s_i(\bl w)\big)^*$ to $\cap_{i=1}^n \ker (M_i - \bl u_i)^*$, where $\bl s(\bl w) = \bl u.$ This  shows that ran$K_\Gamma(\cdot, \bl u)e_j = \cap_{i=1}^n \ker (M_i - \bl u_i)^*$, that is, $K_\Gamma$ is sharp.
\end{rem}

We would now make use of the following well known result, which is analogous to the statement: The polynomial ring $\C[\bl z]$ is a finitely generated free module over $\C[\bl z]^{{\mathfrak S}_n}$ of rank $n!$.
\begin{thm}\label{C} The module $\mb P_{\bl p}\C[\bl z]$ is a finitely generated free module over $\C[\bl z]^{{\mathfrak S}_n}$ of rank $\chi_{\bl p}(1)^2$.
\end{thm}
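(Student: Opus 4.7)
The plan is to deduce the statement from the Chevalley--Shephard--Todd theorem, specialised to the reflection group $\mathfrak S_n$, together with the standard fact that the coinvariant algebra of $\mathfrak S_n$ carries the regular representation. The observation that makes everything fit is that the projection $\mb P_{\bl p}$ is $\C[\bl z]^{\mathfrak S_n}$-linear, because $\mathfrak S_n$ acts trivially on the subring $\C[\bl z]^{\mathfrak S_n}$. Hence whatever direct-sum decomposition we obtain as $\mathfrak S_n$-modules will automatically be a decomposition as $\C[\bl z]^{\mathfrak S_n}$-modules as well.

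First, I would invoke Chevalley's theorem in the following form: there is a graded $\mathfrak S_n$-stable subspace $\m H \subset \C[\bl z]$, the space of $\mathfrak S_n$-harmonic polynomials (kernel of all $\mathfrak S_n$-invariant constant-coefficient differential operators without constant term), such that the multiplication map
\[
\mu : \C[\bl z]^{\mathfrak S_n} \otimes_{\C} \m H \;\longrightarrow\; \C[\bl z],\qquad f\otimes h \mapsto fh,
\]
is an isomorphism of $\C[\bl z]^{\mathfrak S_n}$-modules and of $\mathfrak S_n$-modules. In particular, $\C[\bl z]$ is a free $\C[\bl z]^{\mathfrak S_n}$-module of rank $\dim_{\C}\m H = n!$, recovering the fact cited just before the theorem.

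Second, I would use the classical description of $\m H$ as a $\mathfrak S_n$-module: it is isomorphic to the regular representation $\C[\mathfrak S_n]$. Consequently, by the standard decomposition of the regular representation,
\[
\m H \;\cong\; \bigoplus_{\bl p \,\vdash\, n} V_{\bl p}^{\,\oplus \chi_{\bl p}(1)},
\]
so that the $\bl p$-isotypic component $\mb P_{\bl p}\m H$ has complex dimension $\chi_{\bl p}(1)^2$.

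Finally, since $\mb P_{\bl p}$ commutes with multiplication by any element of $\C[\bl z]^{\mathfrak S_n}$, applying $\mb P_{\bl p}$ to the isomorphism $\mu$ yields
\[
\mb P_{\bl p}\C[\bl z] \;\cong\; \C[\bl z]^{\mathfrak S_n} \otimes_{\C} \mb P_{\bl p}\m H,
\]
which exhibits $\mb P_{\bl p}\C[\bl z]$ as a free $\C[\bl z]^{\mathfrak S_n}$-module of rank $\chi_{\bl p}(1)^2$; a $\C$-basis of $\mb P_{\bl p}\m H$ is a free basis. The main point that needs care, rather than a genuine obstacle, is checking that the Chevalley isomorphism $\mu$ is equivariant for both structures simultaneously, so that taking isotypic components on the right corresponds to tensoring with isotypic components of $\m H$ on the left; this is where the triviality of the $\mathfrak S_n$-action on $\C[\bl z]^{\mathfrak S_n}$ is decisive.
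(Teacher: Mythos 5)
Your argument is correct and is essentially the same as the paper's: both rest on (a) the existence of a graded $\mathfrak S_n$-stable complement realizing $\C[\bl z]$ as a free module over $\C[\bl z]^{\mathfrak S_n}$ of rank $n!$, and (b) the fact that the coinvariant algebra carries the regular representation, so the $\bl p$-isotypic piece has dimension $\chi_{\bl p}(1)^2$. The only difference is packaging — you make the equivariant splitting explicit via the harmonic polynomials and then apply $\mb P_{\bl p}$ to the tensor decomposition, whereas the paper lifts a homogeneous basis of the isotypic part of the coinvariant algebra using Stanley's Theorems 1.3 and 3.10 — and your observation that $\mb P_{\bl p}$ is $\C[\bl z]^{\mathfrak S_n}$-linear is exactly the point that makes either version work.
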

We are unable to locate a proof of this Theorem and therefore indicate a proof
using results from \cite{S}.
\begin{proof} Set $\C[\bl z]_{\bl p} :=  \mb P_{\bl p}\C[\bl z].$  There exists a set of homogeneous polynomials in  $\C[\bl z]_{\bl p},$ whose images in the quotient module $\m S_{\bl p} = \C[\bl z]_{\bl p}/ \{s_1\C[\bl z]_{\bl p} + \cdots+s_n\C[\bl z]_{\bl p}\}$ forms a $\C$-basis for $\m S_{\bl p},$ see \cite[Theorem 1.3]{S}).  Also, from \cite[Theorem 3.10]{S}, it follows that  $p_1,\ldots,p_\mu$ is a free basis for $\C[\bl z]_{\bl p}$ over
$\C[\bl z]^{{\mathfrak S}_n}$. Now to see that $\mu  = \chi_{\bl p}(1)^2$, we make use of \cite[Theorem 4.9]{S} and its proof along with \cite[Corollary 4.9]{S}. It says that the action of $\mathfrak S_n$ on the quotient ring $\C[\bl z]/ \{s_1\C[\bl z] + \cdots+s_n\C[\bl z]\} \cong \oplus_{\bl p\vdash n }\m S_{\bl p}$ is isomorphic to the regular representation of $\mathfrak S_n,$ where the action on $\m S_{\bl p}$ is isomorphic to the representation $\pi_{\bl p}$ corresponding to $\bl p\vdash n$ with multiplicity $\chi_{\bl p}(1).$\end{proof}


Recall that the rank of a Hilbert module $\mathcal H$ over a ring $\mathcal R$  is $\inf  |\mathcal F|,$ where $\mathcal F \subseteq \mathcal H$ is any subset with the property $\{r_1 f_1 + \cdots + r_k f_k: f_1, \ldots , f_k \in \mathcal H; r_1, \ldots , r_k \in \mathcal R\}$ is dense in $\mathcal H$ and $|\mathcal F|$ denotes the cardinality of $\mathcal F$ (cf. \cite[Section 2.3]{CG}).
The proof of the following Corollary is immediate from Theorem \ref{C} and Lemma \ref{dp}.

\begin{cor}\label{fg}
The Hilbert module $\mb P_{\bl p}\big(\mb A^{(\l)}(\mb D^n)\big)$ over $\mb C[\bl z]^{\mathfrak S_n}$ is of rank at most  $\chi_{\bl p}(1)^2.$
\end{cor}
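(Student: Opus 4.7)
The plan is straightforward given the definition of rank: it suffices to exhibit a subset $\mathcal F\subseteq \mb P_{\bl p}\big(\mb A^{(\l)}(\mb D^n)\big)$ of cardinality $\chi_{\bl p}(1)^2$ such that the $\C[\bl z]^{\mathfrak S_n}$-module it generates is norm-dense.

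First I would invoke Theorem \ref{C} to pick a free basis $p_1,\ldots,p_{\mu}$ with $\mu = \chi_{\bl p}(1)^2$ of $\mb P_{\bl p}\C[\bl z]$ over $\C[\bl z]^{\mathfrak S_n}$; by definition
\[
\C[\bl z]^{\mathfrak S_n}\cdot p_1 + \cdots + \C[\bl z]^{\mathfrak S_n}\cdot p_\mu \;=\; \mb P_{\bl p}\C[\bl z].
\]
Next I would argue that $\mb P_{\bl p}\C[\bl z]$ is dense in $\mb P_{\bl p}\big(\mb A^{(\l)}(\mb D^n)\big)$. This is essentially automatic: the polynomial ring $\C[\bl z]$ is dense in $\mb A^{(\l)}(\mb D^n)$, and $\mb P_{\bl p}$ is a bounded (in fact orthogonal) projection, so $\mb P_{\bl p}(\C[\bl z])$ is dense in $\mb P_{\bl p}(\mb A^{(\l)}(\mb D^n))$. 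The explicit averaging formula $\mb P_{\bl p}f = \frac{\chi_{\bl p}(1)}{n!}\sum_\sigma \overline{\chi_{\bl p}(\sigma)}\,(f\circ\sigma^{-1})$ shows $\mb P_{\bl p}$ preserves $\C[\bl z]$, and hence $\mb P_{\bl p}(\C[\bl z]) = \mb P_{\bl p}\C[\bl z]$.

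Combining these two observations, the finite set $\{p_1,\ldots,p_\mu\}$ generates a dense $\C[\bl z]^{\mathfrak S_n}$-submodule of $\mb P_{\bl p}\big(\mb A^{(\l)}(\mb D^n)\big)$, so the rank is at most $\mu=\chi_{\bl p}(1)^2$. Part (3) of Lemma \ref{dp} then furnishes the complementary qualitative statement that these generators project to a spanning set of the module tensor product $\mb P_{\bl p}\big(\mb A^{(\l)}(\mb D^n)\big)\otimes_{\C[\bl z]^{\mathfrak S_n}}\C_{\bl w}$, confirming that $\chi_{\bl p}(1)^2$ is the natural bound. There is no real obstacle here; the only point requiring care is the verification that $\mb P_{\bl p}$ maps polynomials to polynomials, which is immediate from its explicit formula, and this is why Theorem \ref{C} (not merely an abstract generating set) is needed to control cardinality.
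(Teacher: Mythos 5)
Your proof is correct and follows exactly the route the paper intends when it says the corollary is ``immediate from Theorem \ref{C} and Lemma \ref{dp}'': take the free basis $p_1,\ldots,p_{\chi_{\bl p}(1)^2}$ of $\mb P_{\bl p}\C[\bl z]$ over $\C[\bl z]^{\mathfrak S_n}$ from Theorem \ref{C}, and use density of $\C[\bl z]$ in $\mb A^{(\l)}(\mb D^n)$ together with boundedness of the projection $\mb P_{\bl p}$ to conclude that these generate a dense submodule. No gaps.
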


Let $M_{s_i}^{(\bl p)}=M_{s_i}{\mid_{\mb P_{\bl p}\big(\mb A^{(\l)}(\mb D^n)\big)}}.$ From \cite[Remark 3.9]{BS}, recall that each $\mb P_{\bl p}\big(\mb A^{(\l)}(\mb D^n)\big)$ is a reducing subspace of $M_{s_i}$ for each $i, 1\leq i\leq n.$ Therefore, $M_{s_i}^*=\oplus_{\bl p\vdash n}\big(M^{(p)}_{s_i}\big)^*$ and we have
$$\cap_{i=1}^{n}\ker M^*_{s_i-s_i(\bl w)}=\oplus_{\bl p\vdash n }\cap_{i=1}^n\ker\big(M^{(\bl p)}_{s_i-s_i(\bl w)}\big)^*.$$
Now we have the following useful Proposition.
\begin{prop}
$\mr{dim}\cap_{i=1}^n\ker\big(M^{(\bl p)}_{s_i-s_i(\bl w)}\big)^*=\chi_{\bl p}(1)^2,$ $w\in \mathbb D^n \setminus \mathcal Z.$
\end{prop}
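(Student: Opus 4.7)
The plan is to combine the orthogonal decomposition of the joint kernel with two bounds that must match in aggregate, forcing equality for each summand.

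First, I would recall from the discussion preceding the proposition that $\mb P_{\bl p}(\mb A^{(\l)}(\mb D^n))$ reduces each $M_{s_i}$, so
\begin{equation*}
\bigcap_{i=1}^{n}\ker\bigl(M_{s_i}-s_i(\bl w)\bigr)^*\;=\;\bigoplus_{\bl p\vdash n}\bigcap_{i=1}^{n}\ker\bigl(M^{(\bl p)}_{s_i}-s_i(\bl w)\bigr)^*,
\end{equation*}
an orthogonal direct sum. By Corollary \ref{dim}, the left-hand side has dimension exactly $n!$ for $\bl w \in \mb D^n\setminus\m Z$, so the dimensions on the right must add up to $n!$.

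Next, for each partition $\bl p\vdash n$, I would bound the summand from above by $\chi_{\bl p}(1)^2$. By Lemma \ref{dp}(2), the joint kernel $\cap_{i=1}^n\ker(M^{(\bl p)}_{s_i}-s_i(\bl w))^*$ is isomorphic to the module tensor product $\mb P_{\bl p}(\mb A^{(\l)}(\mb D^n))\otimes_{\C[\bl z]^{\mathfrak S_n}}\C_{\bl w}$; by Lemma \ref{dp}(3), this tensor product is spanned by the images of any generating set for $\mb P_{\bl p}(\mb A^{(\l)}(\mb D^n))$ over $\C[\bl z]^{\mathfrak S_n}$. Theorem \ref{C} then supplies such a generating set of size $\chi_{\bl p}(1)^2$, which is exactly the content of Corollary \ref{fg}. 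Hence
\begin{equation*}
\dim\bigcap_{i=1}^{n}\ker\bigl(M^{(\bl p)}_{s_i}-s_i(\bl w)\bigr)^*\;\le\;\chi_{\bl p}(1)^2 \quad \text{for every } \bl p\vdash n.
\end{equation*}

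Finally, I would invoke the classical identity $\sum_{\bl p\vdash n}\chi_{\bl p}(1)^2=n!$ (the sum of squares of the dimensions of the irreducible representations of a finite group equals the order of the group, applied to $\mathfrak S_n$). Combining, we have
\begin{equation*}
n! \;=\; \sum_{\bl p\vdash n}\dim\bigcap_{i=1}^{n}\ker\bigl(M^{(\bl p)}_{s_i}-s_i(\bl w)\bigr)^* \;\le\; \sum_{\bl p\vdash n}\chi_{\bl p}(1)^2 \;=\; n!,
\end{equation*}
so the inequality in the middle is forced to be an equality termwise. This yields the claimed value $\chi_{\bl p}(1)^2$ for every $\bl p$.

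There is no real obstacle: the entire argument is a pigeonhole/dimension count, and both ingredients (the upper bound from Theorem \ref{C} and the total dimension from Corollary \ref{dim}) have already been set up. The only mild point worth double-checking is that the decomposition of the joint kernel genuinely respects the direct sum $\oplus_{\bl p\vdash n}\mb P_{\bl p}(\mb A^{(\l)}(\mb D^n))$, which is ensured by the fact that each $M_{s_i}$ commutes with the symmetric group action and hence reduces to each isotypical summand, making each $M^{(\bl p)}_{s_i}-s_i(\bl w)$ act independently on its block.
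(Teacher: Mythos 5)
Your proposal is correct and is essentially identical to the paper's own proof: both combine the orthogonal decomposition of the joint kernel over partitions, the upper bound $\chi_{\bl p}(1)^2$ from Corollary \ref{fg} and Lemma \ref{dp}, the total dimension $n!$ from Corollary \ref{dim}, and the identity $\sum_{\bl p\vdash n}\chi_{\bl p}(1)^2=n!$ to force termwise equality.
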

\begin{proof}
From Corollary \ref{fg} and Lemma \ref{dp}, it follows that $\mr{dim}\cap_{i=1}^n\ker\big(M^{(\bl p)}_{s_i-s_i(\bl w)}\big)^* \leq\chi_{\bl p}(1)^2$. However if it is strictly less for some $\bl p\vdash n$ we have the following contradiction:
$$
n! =\dim \cap_{i=1}^{n}\ker M^*_{s_i-s_i(\bl w)} = \sum_{\bl p\vdash n }\mr{dim}\cap_{i=1}^n\ker\big(M^{(\bl p)}_{s_i-s_i(\bl w)}\big)^* < \sum_{\bl p\vdash n }\chi_{\bl p}(1)^2 = n!.
$$
For the last equality, see \cite[Theorem 3.4]{KS}.
\end{proof}

From the Proposition given above  and the proof of Theorem \ref{free}, the following generalization to $\mb P_{\bl p}\big(\mb A^{(\l)}(\mb D^n)$  is evident.

\begin{cor}\label{cor2.13}
The Hilbert module $\mb P_{\bl p}\big(\mb A^{(\l)}(\mb D^n)\big)$ over $\mb C[\bl z]^{\mathfrak S_n}$ is locally free of rank $\chi_{\bl p}(1)^2$ on  $\mb G_n\setminus \bl s(\m Z).$
\end{cor}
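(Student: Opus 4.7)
The plan is to imitate the proof of Theorem \ref{free} on the reducing submodule $\mb P_{\bl p}\big(\mb A^{(\l)}(\mb D^n)\big)$, substituting the freshly proven dimension count $\chi_{\bl p}(1)^2$ for the value $n!$. By the Proposition immediately preceding, $\dim \cap_{i=1}^n \ker\big(M_{s_i}^{(\bl p)} - s_i(\bl w)\big)^* = \chi_{\bl p}(1)^2$ for every $\bl w \in \mb D^n \setminus \m Z$, and part (2) of Lemma \ref{dp} applied to $\mb P_{\bl p}\big(\mb A^{(\l)}(\mb D^n)\big)$ (viewed as a Hilbert module over $\mb C[\bl z]^{\mathfrak S_n}$) identifies this joint kernel with the module tensor product at $\bl w$. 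Thus it suffices to produce, near every $\bl u_0 \in \mb G_n \setminus \bl s(\m Z)$, a family of $\chi_{\bl p}(1)^2$ anti-holomorphic sections that span the joint kernel pointwise and are linearly independent on some neighborhood of $\bl u_0$.

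Fix $\bl u_0$ and, as in the proof of Theorem \ref{free}, select a neighborhood $U$ of $\bl u_0$ on which $\bl s$ admits $n!$ holomorphic local inverses $\v_1, \ldots, \v_{n!}$. Set $\gamma_i^{(\bl p)}(\bl u) := \mb P_{\bl p} K^{(\l)}\big(\cdot, \v_i(\bl u)\big)$ for $i = 1, \ldots, n!$; these are anti-holomorphic $\mb P_{\bl p}\big(\mb A^{(\l)}(\mb D^n)\big)$-valued functions of $\bl u$. Because $\mb P_{\bl p}$ is the orthogonal projection onto a reducing submodule, it commutes with every $M_{s_i}^*$, so its restriction to the ambient joint kernel $\cap_{i=1}^n \ker\big(M_{s_i} - s_i(\bl w)\big)^*$ maps onto $\cap_{i=1}^n \ker\big(M_{s_i}^{(\bl p)} - s_i(\bl w)\big)^*$ (surjectivity is immediate because the submodule's joint kernel sits inside the ambient one and is fixed by $\mb P_{\bl p}$). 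Combined with the spanning statement at the end of the proof of Theorem \ref{free}, the vectors $\{\gamma_i^{(\bl p)}(\bl u)\}_{i=1}^{n!}$ therefore span the relevant joint kernel at every $\bl u = \bl s(\bl w) \in U$.

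Because that joint kernel has constant dimension $\chi_{\bl p}(1)^2$ on $U$, some sub-collection $\{\gamma_{i_1}^{(\bl p)}(\bl u_0), \ldots, \gamma_{i_{\chi_{\bl p}(1)^2}}^{(\bl p)}(\bl u_0)\}$ is linearly independent at $\bl u_0$. Its Gram determinant is real-analytic in $\bl u$ and non-zero at $\bl u_0$, hence non-vanishing on a smaller neighborhood $U_0 \subseteq U$; on $U_0$ the selected $\chi_{\bl p}(1)^2$ sections furnish the required anti-holomorphic frame for the module tensor product, proving local freeness of rank $\chi_{\bl p}(1)^2$. The only delicate step is the surjectivity of $\mb P_{\bl p}$ onto the submodule's joint kernel; as indicated, this follows from the commutation of $\mb P_{\bl p}$ with the symmetric multiplications, so no new counting or symmetric-function identities are required beyond those already assembled.
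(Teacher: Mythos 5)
Your proposal is correct and follows essentially the same route as the paper, which simply declares the corollary ``evident'' from the dimension count $\dim\cap_{i=1}^n\ker\big(M^{(\bl p)}_{s_i}-s_i(\bl w)\big)^*=\chi_{\bl p}(1)^2$ together with the construction in the proof of Theorem \ref{free}. You have merely supplied the details the paper omits --- projecting the sections $K^{(\l)}\big(\cdot,\v_i(\bl u)\big)$ by $\mb P_{\bl p}$, noting that $\mb P_{\bl p}$ maps the ambient joint kernel onto the sub-module's joint kernel because it commutes with the $M_{s_i}^*$, and extracting a locally non-degenerate sub-collection via the Gram determinant --- all of which are sound.
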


\begin{rem} Since $\mb P_{\bl p}\big(\mb A^{(\l)}(\mb D^n)\big)$ is assumed to be locally free at $\bl w\in\mb G_n\setminus \bl s(\m Z),$ it follows that
$E_{\bl p}=\{(\bl u, x)\in U\times\mb P_{\bl p}\big(\mb A^{(\l)}(\mb D^n)\big)\mid x\in \cap_{i = 1}^n\ker\big(M_{s_i-u_i})^* \}$ and $\pi(\bl u, x)=\bl u$ defines a rank $\chi_{\bl p}(1)^2$ hermitian anti-holomorphic vector bundle on some open neighbourhood  $W$ of $\bl w.$ The equivalence class of this vector bundle $E_{\bl p}$ determines the isomorphism class of the module $\mb P_{\bl p}\big(\mb A^{(\l)}(\mb D^n)\big)$  and conversely. The vector bundle $E$ corresponding to the module $\mb A^{(\l)}(\mb D^n)$ is therefore the direct sum
$\oplus_{\bl p\vdash n}E_{\bl p}.$
\end{rem}

\begin{rem}
An alternative proof of the Corollary \ref{dim} is possible using Lemma \ref{dp}. For this proof, which is indicated below, it is essential to  use a non-trivial result  from \cite{CS} rather than the direct proof that we have presented earlier. From Lemma \ref{dp}, it follows that $\mr {dim} \cap_{i=1}^n \ker \big(M_{s_i}-s_i(\bl w)\big)^*\leq n!$. To prove the reverse inequality, we show that for $\bl w\in \mb D^n\setminus \m Z,$ the set of vectors $\{K_{\bl w_\sigma}\mid \sigma\in\mathfrak S_n\}$ are  linearly independent.  Since the polynomial ring is dense in $\mb A^{(\l)}(\mb D^n)$, the reproducing kernel $K$ is non-degenerate. From \cite[Lemma 3.6]{CS}, it then follows that $K$ is strictly positive, that is,  for all $k\geq 1$ the $k\times k$-operator matrix $\big(\!\!\big(K(\bl z_i,\bl z_j)\big)\!\!\big)_{1\leq i, j\leq k}$ is injective for every collection $\{\bl z_1,\ldots, \bl z_k\}$ of distinct points in $\mb D^n\setminus \m Z$. Since the set $\{\bl w_\sigma\mid \sigma\in\mathfrak S_n\}$ contains exactly $n!$ distinct points for every $\bl w\in \mb D^n\setminus \m Z$, the matrix $\big(\!\!\big(\inner{K_{\bl w_\sigma}}{K_{\bl w_\tau}}\big)\!\!\big)_{\sigma,\tau\in\mathfrak S_n}$ is injective and hence the nonsingularity of the grammian of $\{K_{\bl w_\sigma}\mid \sigma\in\mathfrak S_n\}$ gives the linear independence.
\end{rem}
\section{$\mathfrak S_n$-invariant kernel}
Let $\Omega\subseteq\mb C^n$ be a bounded domain invariant under the action of $\mathfrak S_n.$ Let $K$ be a $\mathfrak S_n$-invariant reproducing kernel on $\Omega,$ that is,
$$
K(\sigma\cdot\bl z, \sigma\cdot\bl w) = K(\bl z, \bl w) \mbox{~for ~all~} \sigma \in \mathfrak S_n.
$$
Let $\mathcal H_K$ denote the Hilbert space with $K$ as reproducing kernel. Let $U:\mathfrak S_n\ra \mathcal B(\mathcal H_K)$ be a unitary representation. Consider a function $f:\mathfrak S_n\ra \C$ satisfying $f(\sigma^{-1}) = \ov{f(\sigma)}$. Define an operator on $\mathcal H_K$ by
\Bea
T^f = \sum_{\sigma\in \mathfrak S_n} \ov{f(\sigma)}U(\sigma).
\Eea
Since $U(\sigma)^* = U(\sigma^{-1}),$ it follows that
$$
(T^f)^* = \sum_{\sigma\in \mathfrak S_n} {f(\sigma)}U(\sigma)^* = \sum_{\sigma\in \mathfrak S_n} \ov{f(\sigma^{-1})}U(\sigma^{-1}) = \sum_{\tau\in \mathfrak S_n} \ov{f(\tau)}U(\tau) = T^f. $$ Thus we have proved:
\begin{lem}\label{sa}
$T^f$ is self adjoint on $\mathcal H_K$.
\end{lem}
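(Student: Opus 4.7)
The plan is to verify self-adjointness by a direct computation on the defining sum, using only two ingredients: the fact that $U$ is a unitary representation of $\mathfrak S_n$ (so $U(\sigma)^{*}=U(\sigma^{-1})$), and the symmetry hypothesis $f(\sigma^{-1})=\overline{f(\sigma)}$. There are no subtle domain or convergence issues since $\mathfrak S_n$ is finite and $T^{f}$ is a finite linear combination of bounded operators.

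First I would take the adjoint inside the finite sum, conjugating the scalar coefficient and adjointing the operator factor, to get
\[
(T^{f})^{*} \;=\; \sum_{\sigma \in \mathfrak S_n} f(\sigma)\, U(\sigma)^{*} \;=\; \sum_{\sigma \in \mathfrak S_n} f(\sigma)\, U(\sigma^{-1}),
\]
which is legitimate because taking adjoints is antilinear and commutes with finite sums, and because $U$ is a unitary representation. Next I would perform the change of index $\tau=\sigma^{-1}$; since inversion is a bijection of $\mathfrak S_n$, this just re-sums over $\mathfrak S_n$, producing $\sum_{\tau} f(\tau^{-1}) U(\tau)$. Finally I would invoke the hypothesis $f(\tau^{-1})=\overline{f(\tau)}$ to recognize the result as $T^{f}$.

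There is essentially no obstacle; the only thing to be careful about is that the hypothesis is stated as $f(\sigma^{-1})=\overline{f(\sigma)}$ (not its complex conjugate), so one must match the coefficient $\overline{f(\sigma)}$ appearing in the definition of $T^{f}$ against $f(\tau^{-1})$ coming out of the adjoint computation after reindexing, rather than the other way around. Once the substitution is written out carefully, the three lines in the excerpt preceding the lemma statement already constitute the entire proof, and the conclusion $(T^{f})^{*}=T^{f}$ follows immediately.
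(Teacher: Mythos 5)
Your computation is correct and is essentially identical to the paper's own proof: both take the adjoint term by term, use $U(\sigma)^{*}=U(\sigma^{-1})$ together with the hypothesis $f(\sigma^{-1})=\overline{f(\sigma)}$, and reindex over the inverse. The only cosmetic difference is that the paper applies the hypothesis before reindexing while you apply it after; the argument is the same.
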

As before, let $\bl\pi_{\bl p}$ be a unitary representation of $\mathfrak S_n$ in the equivalence class of $\bl p\vdash n$, that is, $\bl\pi_{\bl p}(\sigma) = \big (\!\big (\bl\pi_{\bl p}^{ij}(\sigma)\big )\!\big )_{i,j = 1}^{m} \in \C^{m\times m}$, $\sigma \in \mathfrak S_n,$ where $m = \chi_{\bl p}(1)$ and $\chi_{\bl p}$ is the character of the representation $\bl\pi_{\bl p}.$
The following orthogonality relations \cite[Proposition 2.9]{KS} play a central role in this section:
\bea\label{or}
\sum_{\sigma\in\mathfrak S_n}\bl\pi_{\bl p}^{ij}(\sigma^{-1})\bl\pi_{\bl q}^{lm}(\sigma) =
\frac{n!}{\chi_{\bl p}(1)}\delta_{\bl p\bl q}\delta_{im}\delta_{jl},
\eea
where $\delta$ is the Kronecker symbol.  Define the operators $\mathbb{P}_{\bl p}^{ij}, \mathbb{P}_{\bl p}: \mathcal H_K\ra\mathcal H_K,\, 1\leq i, j\leq \chi_{\bl p}(1),$ by
\Bea
\mathbb{P}_{\bl p}^{ij} = \frac{\chi_{\bl p}(1)}{n!}\sum_{\sigma\in\mathfrak{S}_n}\bl\pi_{\bl p}^{ji}(\sigma^{-1})U(\sigma)
\Eea
and
\Bea
\mb P_{\bl p}=\frac{\chi_{\bl p}(1)}{n!}\sum_{\sigma\in\mathfrak S_n}\ov{\chi_{\bl p}(\sigma)}U(\sigma).
\Eea
Clearly,
\Bea\label{rel}\sum_{i = 1}^{\chi_{\bl p}(1)} \mathbb{P}_{\bl p}^{ii} = \mathbb{P}_{\bl p}.\Eea
The following lemma and some of the subsequent discussions are adapted from  the properties of projection operators given in \cite[p.  162]{KS}. We include this for sake of completeness.
\begin{prop}\label{proj1}
For $1\leq i, j\leq \chi_{\bl p}(1)$ and $1\leq l,m\leq \chi_{\bl q}(1)$, $\mathbb{P}_{\bl p}^{ij}\mathbb{P}_{\bl q}^{lm} = \delta_{\bl p\bl q}\delta_{jl}\mathbb{P}_{\bl p}^{im}$.
\end{prop}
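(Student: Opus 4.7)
The plan is to prove the identity by direct computation, expanding both projection-like operators using their definitions and then invoking the orthogonality relations \eqref{or}. This is a purely formal manipulation that is standard in the representation theory of finite groups (it is the operator form of the well-known orthogonality of matrix coefficients), so the main task is bookkeeping with indices rather than overcoming a substantive obstacle.

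Concretely, I would first write
\[
\mathbb{P}_{\bl p}^{ij}\mathbb{P}_{\bl q}^{lm} \;=\; \frac{\chi_{\bl p}(1)\,\chi_{\bl q}(1)}{(n!)^2}\sum_{\sigma,\tau\in\mathfrak S_n}\bl\pi_{\bl p}^{ji}(\sigma^{-1})\,\bl\pi_{\bl q}^{ml}(\tau^{-1})\,U(\sigma)U(\tau),
\]
and use the fact that $U$ is a representation, so $U(\sigma)U(\tau)=U(\sigma\tau)$. Substituting $\rho=\sigma\tau$, i.e.\ $\sigma=\rho\tau^{-1}$, and then applying the matrix-product rule $\bl\pi_{\bl p}^{ji}(\tau\rho^{-1})=\sum_{k}\bl\pi_{\bl p}^{jk}(\tau)\bl\pi_{\bl p}^{ki}(\rho^{-1})$, the double sum reorganizes as
\[
\frac{\chi_{\bl p}(1)\,\chi_{\bl q}(1)}{(n!)^2}\sum_{\rho,k}\bl\pi_{\bl p}^{ki}(\rho^{-1})\,U(\rho)\,\Bigl(\sum_{\tau}\bl\pi_{\bl p}^{jk}(\tau)\,\bl\pi_{\bl q}^{ml}(\tau^{-1})\Bigr).
\]

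The inner $\tau$-sum is exactly a case of the orthogonality relation \eqref{or} (after relabelling), which collapses it to $\tfrac{n!}{\chi_{\bl q}(1)}\delta_{\bl p\bl q}\delta_{lj}\delta_{mk}$. The Kronecker symbol $\delta_{mk}$ eliminates the sum over $k$ (forcing $k=m$), the factor $\chi_{\bl q}(1)$ cancels, and what remains is
\[
\delta_{\bl p\bl q}\,\delta_{jl}\cdot\frac{\chi_{\bl p}(1)}{n!}\sum_{\rho}\bl\pi_{\bl p}^{mi}(\rho^{-1})\,U(\rho)\;=\;\delta_{\bl p\bl q}\,\delta_{jl}\,\mathbb{P}_{\bl p}^{im},
\]
which is the desired identity.

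The only point that requires any care is getting the order of the indices right: since the definition of $\mathbb{P}_{\bl p}^{ij}$ uses the matrix entry $\bl\pi_{\bl p}^{ji}$ (with swapped indices), one must be careful when applying \eqref{or} to ensure the correct pairing of row/column indices. There is no analytic difficulty, no use of the specific structure of the kernel $K$, and no appeal to self-adjointness from Lemma \ref{sa}; the statement is a formal consequence of $U$ being a unitary representation together with the Schur-type orthogonality relations.
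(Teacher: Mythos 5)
Your computation is correct and follows essentially the same route as the paper's proof: expand both operators, substitute for the product of group elements ($\rho=\sigma\tau$, the paper's $\eta=\tau\sigma$ up to relabelling), apply the matrix-product identity for $\bl\pi_{\bl p}$, and collapse the inner sum with the orthogonality relation \eqref{or}. The index bookkeeping, including the swapped indices in the definition of $\mathbb{P}_{\bl p}^{ij}$, checks out, so there is nothing to add.
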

\begin{proof}Since $\mathbb{P}_{\bl p}^{ij} = \frac{\chi_{\bl p}(1)}{n!}\sum_{\sigma\in\mathfrak{S}_n}\bl\pi_{\bl p}^{ji}(\sigma^{-1})U(\sigma),$ we have that
\Bea
\mathbb{P}_{\bl p}^{ij}\mathbb{P}_{\bl q}^{lm} &=& \frac{\chi_{\bl q}(1)}{n!}\sum_{\sigma\in\mathfrak{S}_n}\bl\pi_{\bl q}^{ml}(\sigma^{-1})\mathbb{P}_{\bl p}^{ij}U(\sigma)\\ &=& \frac{\chi_{\bl p}(1)\chi_{\bl q}(1)}{(n!)^2}\sum_{\sigma\in\mathfrak{S}_n}\bl\pi_{\bl q}^{ml}(\sigma^{-1})\{\sum_{\tau\in\mathfrak{S}_n}\bl\pi_{\bl p}^{ji}(\tau^{-1})U(\tau)\}U(\sigma)\\&=& \frac{\chi_{\bl p}(1)\chi_{\bl q}(1)}{(n!)^2}\sum_{\sigma\in\mathfrak{S}_n}\sum_{\tau\in\mathfrak{S}_n}\bl\pi_{\bl q}^{ml}(\sigma^{-1})\bl\pi_{\bl p}^{ji}(\tau^{-1})U(\tau)U(\sigma).
\Eea
Let $\eta = \tau\sigma$. Then $\tau^{-1} = \sigma\eta^{-1}$ and
$$
\bl\pi_{\bl p}^{ji}(\sigma\eta^{-1}) = (\bl\pi_{\bl p}(\sigma\eta^{-1}) )_{ji} = (\bl\pi_{\bl p}(\sigma)\bl\pi_{\bl p}(\eta^{-1}) )_{ji} = \sum_{k = 1}^{\chi_{\bl p}(1)} \bl\pi_{\bl p}^{jk}(\sigma)\bl\pi_{\bl p}^{ki}(\eta^{-1}).
$$
Thus, we also have
\Bea
\mathbb{P}_{\bl p}^{ij}\mathbb{P}_{\bl q}^{lm}
&=& \frac{\chi_{\bl p}(1)\chi_{\bl q}(1)}{(n!)^2}\sum_{\sigma\in\mathfrak{S}_n}\sum_{\eta\in\mathfrak{S}_n}\bl\pi_{\bl q}^{ml}(\sigma^{-1})\bl\pi_{\bl p}^{ji}(\sigma\eta^{-1})U(\eta)\\&=& \frac{\chi_{\bl p}(1)\chi_{\bl q}(1)}{(n!)^2}\sum_{\sigma\in\mathfrak{S}_n}\sum_{\eta\in\mathfrak{S}_n}\bl\pi_{\bl q}^{ml}(\sigma^{-1})\sum_{k = 1}^{\chi_{\bl p}(1)} \bl\pi_{\bl p}^{jk}(\sigma)\bl\pi_{\bl p}^{ki}(\eta^{-1})U(\eta)\\&=& \frac{\chi_{\bl p}(1)\chi_{\bl q}(1)}{(n!)^2} \sum_{\eta\in\mathfrak{S}_n}\sum_{k = 1}^{\chi_{\bl p}(1)} \{\sum_{\sigma\in\mathfrak{S}_n}\bl\pi_{\bl q}^{ml}(\sigma^{-1})\bl\pi_{\bl p}^{jk}(\sigma)\}\bl\pi_{\bl p}^{ki}(\eta^{-1})U(\eta)\\&=& \frac{\chi_{\bl p}(1)\chi_{\bl q}(1)}{(n!)^2} \sum_{\eta\in\mathfrak{S}_n}\sum_{k = 1}^{\chi_{\bl p}(1)} \{\delta_{\bl p\bl q}\delta_{lj}\delta_{mk}\frac{n!}{\chi_{\bl q}(1)}\}\bl\pi_{\bl p}^{ki}(\eta^{-1})U(\eta),\, (\mbox{from~Equation~} \eqref{or})\\&=& \delta_{\bl p\bl q}\delta_{jl}\frac{\chi_{\bl p}(1)}{n!}\sum_{\eta\in\mathfrak{S}_n}\sum_{k = 1}^{\chi_{\bl p}(1)} \delta_{mk}\bl\pi_{\bl p}^{ki}(\eta^{-1})U(\eta)\\&=& \delta_{\bl p\bl q}\delta_{jl}\frac{\chi_{\bl p}(1)}{n!}\sum_{\eta\in\mathfrak{S}_n}\bl\pi_{\bl p}^{mi}(\eta^{-1})U(\eta)\\&=& \delta_{\bl p\bl q}\delta_{jl}\mathbb{P}_{\bl p}^{im}.
\Eea
\end{proof}
\begin{cor}\label{decomp}
For each partition $\bl p$ of $n$ and $1\leq i\leq \chi_{\bl p}(1)$, $\mathbb{P}_{\bl p}^{ii}$ is an orthogonal projection and $\sum_{\bl p\vdash n}\sum_{i = 1}^{\chi_{\bl p}(1)} \mathbb{P}_{\bl p}^{ii} = {\mr {id}}.$
\end{cor}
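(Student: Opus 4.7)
\medskip

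\noindent\textbf{Proof proposal.} The corollary has two assertions; both are quick consequences of the machinery already set up. For the first, fix a partition $\bl p \vdash n$ and an index $i$ with $1 \le i \le \chi_{\bl p}(1)$. Self-adjointness of $\mathbb{P}_{\bl p}^{ii}$ will follow from Lemma \ref{sa} applied to the scalar function
\[
f(\sigma) \;=\; \tfrac{\chi_{\bl p}(1)}{n!}\,\bl\pi_{\bl p}^{ii}(\sigma),\qquad \sigma \in \mathfrak{S}_n.
\]
Unitarity of the representation $\bl\pi_{\bl p}$ gives $\bl\pi_{\bl p}(\sigma^{-1}) = \bl\pi_{\bl p}(\sigma)^*$, so in particular $\bl\pi_{\bl p}^{ii}(\sigma^{-1}) = \overline{\bl\pi_{\bl p}^{ii}(\sigma)}$, and hence $f(\sigma^{-1}) = \overline{f(\sigma)}$, which is exactly the hypothesis of Lemma \ref{sa}. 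Comparing definitions, $\mathbb{P}_{\bl p}^{ii} = T^{f}$, so it is self-adjoint. Idempotency is immediate from Proposition \ref{proj1}: setting $\bl p = \bl q$ and $i = j = l = m$ yields
\[
\mathbb{P}_{\bl p}^{ii}\,\mathbb{P}_{\bl p}^{ii} \;=\; \delta_{\bl p\bl p}\,\delta_{ii}\,\mathbb{P}_{\bl p}^{ii} \;=\; \mathbb{P}_{\bl p}^{ii}.
\]
Together these two facts show that $\mathbb{P}_{\bl p}^{ii}$ is an orthogonal projection.

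For the completeness relation, I would first collapse the inner sum using the identity $\sum_{i=1}^{\chi_{\bl p}(1)} \mathbb{P}_{\bl p}^{ii} = \mathbb{P}_{\bl p}$ already recorded in the excerpt, so the claim reduces to $\sum_{\bl p \vdash n} \mathbb{P}_{\bl p} = \mathrm{id}$. Substituting the definition of $\mathbb{P}_{\bl p}$ and interchanging the order of summation,
\[
\sum_{\bl p \vdash n} \mathbb{P}_{\bl p} \;=\; \frac{1}{n!}\sum_{\sigma \in \mathfrak{S}_n} \Bigl(\,\sum_{\bl p \vdash n} \chi_{\bl p}(1)\,\overline{\chi_{\bl p}(\sigma)}\Bigr)\,U(\sigma).
\]
The inner sum is the column-orthogonality relation for the characters of $\mathfrak{S}_n$ evaluated at the columns indexed by $e$ and $\sigma$; it equals $n!$ when $\sigma = e$ and $0$ otherwise. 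Substituting this back, the right-hand side collapses to $U(e) = \mathrm{id}$, which is what was to be shown.

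There is no real obstacle here: once Proposition \ref{proj1} and Lemma \ref{sa} are in hand, both statements are mechanical, the only external input being the standard column-orthogonality of irreducible characters, which is found in the same reference \cite{KS} already cited for the row-orthogonality relation \eqref{or}.
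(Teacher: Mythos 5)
Your proof is correct and follows essentially the same route as the paper: self-adjointness via Lemma \ref{sa}, idempotency via Proposition \ref{proj1} with $\bl p=\bl q$ and $i=j=l=m$, and the completeness relation by collapsing to $\sum_{\bl p}\mathbb P_{\bl p}$ and invoking the column (second) orthogonality relation for the irreducible characters of $\mathfrak S_n$. Your explicit identification of $\mathbb P_{\bl p}^{ii}$ with $T^f$ for $f(\sigma)=\tfrac{\chi_{\bl p}(1)}{n!}\bl\pi_{\bl p}^{ii}(\sigma)$ is in fact slightly more careful than the paper's one-line appeal to Lemma \ref{sa}.
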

\begin{proof}
Since $\bl\pi_{\bl p}$ is a unitary representation, it follows that $\bl\pi_{\bl p}^{ii}(\sigma^{-1}) = \ov{\bl\pi_{\bl p}^{ii}(\sigma)}$. Thus from Lemma \ref{sa}, we find that $\mathbb{P}^{ii}_{\bl p}$ is self adjoint. From the Proposition \ref{proj1}, it follows that $(\mathbb{P}^{ii}_{\bl p})^2 = \mathbb{P}^{ii}_{\bl p}$. Then we see that
$$
\sum_{\bl p\vdash n}\sum_{i = 1}^{\chi_{\bl p}(1)} \mathbb{P}^{\bl p}_{ii} = \sum_{\bl p~ \vdash n} \mb P_{\bl p} = \sum_{\bl p~ \vdash n} \frac{\chi_{\bl p}(1)}{n!}\sum_{\sigma\in \mathfrak S_n}\chi_{\bl p}(\sigma)U(\sigma) = \frac{1}{n!} \sum_{\sigma\in \mathfrak S_n}\Big(\sum_{\bl p~ \vdash n}\chi_{\bl p}(1)\chi_{\bl p}(\sigma)\Big) U(\sigma) = {\mr {id}},
$$
where the last equality follows from the orthogonality relations \cite[Proposition 3.8]{KS}. This completes the proof.
\end{proof}

For any  $\mathfrak{S}_n$-invariant kernel $K,$  we claim that the function $f\circ\sigma^{-1},$ $\sigma \in \mathfrak S_n,$ is in $\mathcal{H}_K,$ $f$ in $\mathcal{H}_K.$
To see this, recall that  $f$ is in $\mathcal{H}_K$ if and only if there exists a positive real number $c$ such that $K_f(z,w):=\big (c^2K(\bl z, \bl w) - f(\bl z)\ov{f(\bl w)}\big )
$ is positive definite, see \cite[p.  194]{BM}. Since
\begin{eqnarray*}
K_{f\circ \sigma^{-1}}(z,w) &=& c^2K(\bl z, \bl w) - f\circ\sigma^{-1}(\bl z)\ov{f\circ\sigma^{-1}(\bl w)}\\
&=& c^2K(\sigma\cdot\bl u, \sigma\cdot\bl v) - f(\bl u)\ov{f(\bl v)}\\
&=& c^2K(\bl u, \bl v) - f(\bl u)\ov{f(\bl v)}\\
&=&K_f(\bl u, \bl v),
\end{eqnarray*}
where $\sigma\cdot\bl u = \bl z$ and $\sigma\cdot\bl v = \bl w,$ it follows that $K_{f\circ \sigma^{-1}}$ is positive definite.  Thus the operator $R_\sigma: \mathcal{H}_K\ra\mathcal{H}_K,$ $R_\sigma(f) = f\circ\sigma^{-1},$ is well defined.
\begin{lem}
The map $R: \sigma\mapsto R_\sigma$ is a unitary representation of $\mathfrak{S}_n$ on $\mathcal H_K$.
\end{lem}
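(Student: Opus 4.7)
The plan is to verify the two defining properties of a unitary representation: the homomorphism property $R_{\sigma\tau}=R_\sigma R_\tau$ and the unitarity of each $R_\sigma$. The homomorphism property is just a direct unwinding of the definition, since for $f\in\mathcal H_K$ and $\sigma,\tau\in\mathfrak S_n$,
\[
R_{\sigma\tau}f \;=\; f\circ(\sigma\tau)^{-1} \;=\; f\circ\tau^{-1}\circ\sigma^{-1} \;=\; R_\sigma(R_\tau f),
\]
so only the identity $R_e=I$ and the composition rule need to be observed. This part is essentially bookkeeping.

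For unitarity, my strategy would be to test $R_\sigma$ on the total set of kernel functions $\{K(\cdot,\bl w):\bl w\in\Omega\}$, whose linear span is dense in $\mathcal H_K$. First I would compute the action on kernel sections: for any $\bl z\in\Omega$,
\[
\bigl(R_\sigma K(\cdot,\bl w)\bigr)(\bl z) \;=\; K(\sigma^{-1}\!\cdot\bl z,\bl w) \;=\; K(\sigma\cdot\sigma^{-1}\!\cdot\bl z,\sigma\cdot\bl w) \;=\; K(\bl z,\sigma\cdot\bl w),
\]
where the $\mathfrak S_n$-invariance of $K$ is used in the middle equality. Hence $R_\sigma K(\cdot,\bl w)=K(\cdot,\sigma\cdot\bl w)$, which already shows $R_\sigma$ maps $\mathcal H_K$ into itself on the dense subset. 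Using the reproducing property,
\[
\bigl\langle R_\sigma K(\cdot,\bl w),\,R_\sigma K(\cdot,\bl u)\bigr\rangle \;=\; \bigl\langle K(\cdot,\sigma\cdot\bl w),\,K(\cdot,\sigma\cdot\bl u)\bigr\rangle \;=\; K(\sigma\cdot\bl u,\sigma\cdot\bl w) \;=\; K(\bl u,\bl w),
\]
which equals $\langle K(\cdot,\bl w),K(\cdot,\bl u)\rangle$. Extending linearly and then by continuity, $R_\sigma$ is an isometry of $\mathcal H_K$.

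Finally, combining the homomorphism property with isometry, $R_\sigma R_{\sigma^{-1}}=R_e=I=R_{\sigma^{-1}}R_\sigma$, so each $R_\sigma$ is invertible, and an invertible isometry on a Hilbert space is unitary. The only mildly delicate point is ensuring that $R_\sigma$ is \emph{defined} on all of $\mathcal H_K$ rather than merely on polynomials; this was already secured by the preceding paragraph, which shows $f\circ\sigma^{-1}\in\mathcal H_K$ whenever $f\in\mathcal H_K$ via the domination $c^2K-f\bar f\succeq 0$ being preserved under the substitution. Thus no substantial obstacle arises and the proof amounts to stringing these observations together.
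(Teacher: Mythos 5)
Your proposal is correct and follows essentially the same route as the paper: verify the homomorphism property directly, compute $R_\sigma K(\cdot,\bl w)=K(\cdot,\sigma\cdot\bl w)$ using the $\mathfrak S_n$-invariance of $K$, and check preservation of inner products on the total set of kernel functions. Your added remarks on surjectivity (via $R_{\sigma^{-1}}$) and on well-definedness of $R_\sigma$ on all of $\mathcal H_K$ are points the paper handles implicitly or in the discussion immediately preceding the lemma, so there is no substantive difference.
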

\begin{proof}
Note that $R_{\sigma\tau}f (\bl z) = f\circ(\sigma\tau)^{-1}(\bl z) = f(\tau^{-1}\sigma^{-1}\cdot\bl z) = (R_\tau f)(\sigma^{-1}\cdot\bl z) = R_\sigma(R_\tau f)(\bl z)$. Thus $R_{\sigma\tau} = R_\sigma R_\tau$.  Since the set $\{K_{\bl w}\mid \bl w\in\Omega\}$ is total in $\mathcal H_K$, it is enough to check $R_\sigma$ is unitary on $\{K_{\bl w}\mid \bl w\in\Omega\}$. Also,
$$
R_\sigma K_{\bl w}(\bl z) = K_{\bl w}(\sigma^{-1}\cdot\bl z) = K(\sigma^{-1}\cdot\bl z, \bl w) = K(\bl z, \sigma\cdot\bl w) = K_{\sigma\cdot\bl w}(\bl z),
$$
that is, $R_\sigma K_{\bl w} = K_{\sigma\cdot\bl w}$. Thus
$$
\langle R_\sigma K_{\bl w}, R_\sigma K_{\bl w'}\rangle =  \langle K_{\sigma\cdot\bl w}, K_{\sigma\cdot\bl w'}\rangle = K(\sigma\cdot\bl w', \sigma\cdot\bl w) = K(\bl w', \bl w) = \langle K_{\bl w}, K_{\bl w'}\rangle.
$$
This completes the proof.
\end{proof}
For $\l >0,$ recall that $K^{(\l)}:\mb D^n\times\mb D^n\to\mb C$ is the
reproducing kernel of $\mb A^{(\l)}(\mb D^n).$
In the remaining portion of this section, we will specialize to the representation  $R.$
Now, the formula for $\mathbb{P}_{\bl p}^{ij}$ and $\mb P_{\bl p}$ simplifies to
\bea\label{fpf}
\mathbb{P}_{\bl p}^{ij}f(\bl z) = \frac{\chi_{\bl p}(1)}{n!}\sum_{\sigma\in\mathfrak{S}_n}\bl\pi_{\bl p}^{ji}(\sigma^{-1})(R_\sigma f)(\bl z) = \frac{\chi_{\bl p}(1)}{n!}\sum_{\sigma\in\mathfrak{S}_n}\bl\pi_{\bl p}^{ji}(\sigma^{-1})f(\sigma^{-1}\cdot\bl z)
\eea
and
\bea\label{od}
\mb P_{\bl p}f(\bl z) = \frac{\chi_{\bl p}(1)}{n!}\sum_{\sigma\in\mathfrak S_n}\ov{\chi_{\bl p}(\sigma)}R_\sigma f(\bl z) = \frac{\chi_{\bl p}(1)}{n!}\sum_{\sigma\in\mathfrak S_n}\ov{\chi_{\bl p}(\sigma)}f(\sigma^{-1}\cdot\bl z).
\eea
This is the projection formula used extensively earlier in \cite[Equation (3.2)]{BS}. From  Corollary \ref{decomp}, it follows that the subspace $\mathbb{P}_{\bl p}^{ii}\big(\mb A^{(\l)}(\mb D^n)\big)$ of $\mb A^{(\l)}(\mb D^n)$ is a reproducing kernel Hilbert space for each $\bl p\vdash n$ and $1\leq i\leq \chi_{\bl p}(1).$
From Equation \eqref{rel} and Proposition \ref{proj1}, we  have
\bea\label{new}\mathbb{P}_{\bl p}\big(\mb A^{(\l)}(\mb D^n)\big) = \bigoplus_{i = 1}^{\chi_{\bl p}(1)}\mathbb{P}_{\bl p}^{ii}\big(\mb A^{(\l)}(\mb D^n)\big)\eea
and consequently, using Corollary \ref{decomp}, we obtain a finer decomposition of $\mb A^{(\l)}(\mb D^n):$
\bea\label{finedecomp}
\mb A^{(\l)}(\mb D^n) = \bigoplus_{\bl p\vdash n}\mathbb{P}_{\bl p}\big(\mb A^{(\l)}(\mb D^n)\big)= \bigoplus_{\bl p\vdash n}\bigoplus_{i = 1}^{\chi_{\bl p}(1)}\mathbb{P}_{\bl p}^{ii}\big(\mb A^{(\l)}(\mb D^n)\big). \eea
The first of the two equalities was obtained in  \cite[p. 6237 - 6238]{BS}, see also \cite[p.  2368]{MSZ}.
From Lemma \ref{fg}, it follows that the orthogonal projection $\mathbb{P}_{\bl p}$ is non-trivial. In fact, the projections $\mathbb{P}_{\bl p}^{ii}$ are non-trivial as well. We record  this as a separate Lemma. The main ingredient of the proof is borrowed from \cite[p. - 166]{KS}.
\begin{lem}\label{projPii}
For each $\bl p\vdash n$ and $1\leq i\leq \chi_{\bl p}(1)$, $\mathbb{P}_{\bl p}^{ii}\neq 0$.
\end{lem}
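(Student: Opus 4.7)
The plan is to exploit the fact that, by Proposition \ref{proj1}, the family $\{\mathbb{P}_{\bl p}^{ij}\}_{i,j}$ behaves like a system of matrix units inside $\mathcal B(\mathcal H_K)$, and to show that the off-diagonal operators implement partial isometries between the ranges of the diagonal projections. Combined with the already-established non-triviality of $\mathbb{P}_{\bl p}$, this forces every $\mathbb{P}_{\bl p}^{ii}$ to be non-zero.

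First I would verify the adjoint relation $(\mathbb{P}_{\bl p}^{ij})^{*}=\mathbb{P}_{\bl p}^{ji}$. This is a short calculation: starting from the definition of $\mathbb{P}_{\bl p}^{ij}$, taking adjoints turns $U(\sigma)$ into $U(\sigma^{-1})$, and after the change of variable $\tau=\sigma^{-1}$ one is left with the coefficient $\overline{\bl\pi_{\bl p}^{ji}(\tau^{-1})}$. Unitarity of $\bl\pi_{\bl p}(\tau)$ gives $\overline{\bl\pi_{\bl p}^{ji}(\tau^{-1})}=\bl\pi_{\bl p}^{ij}(\tau)$, and rewriting once more with $\rho=\tau^{-1}$ recovers exactly $\mathbb{P}_{\bl p}^{ji}$. (This step is essentially already in the proof of Corollary \ref{decomp} when $i=j$ and Lemma \ref{sa}.)

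Next I would apply Proposition \ref{proj1} with $\bl q=\bl p$. The two specializations $\mathbb{P}_{\bl p}^{ij}\mathbb{P}_{\bl p}^{ji}=\mathbb{P}_{\bl p}^{ii}$ and $\mathbb{P}_{\bl p}^{ji}\mathbb{P}_{\bl p}^{ij}=\mathbb{P}_{\bl p}^{jj}$ translate, via the adjoint relation of the previous step, into
\[
\mathbb{P}_{\bl p}^{ij}\,(\mathbb{P}_{\bl p}^{ij})^{*}=\mathbb{P}_{\bl p}^{ii},\qquad (\mathbb{P}_{\bl p}^{ij})^{*}\,\mathbb{P}_{\bl p}^{ij}=\mathbb{P}_{\bl p}^{jj}.
\]
Thus $\mathbb{P}_{\bl p}^{ij}$ is a partial isometry whose initial projection is $\mathbb{P}_{\bl p}^{jj}$ and whose final projection is $\mathbb{P}_{\bl p}^{ii}$; consequently the subspaces $\mathbb{P}_{\bl p}^{ii}(\mathcal H_K)$ and $\mathbb{P}_{\bl p}^{jj}(\mathcal H_K)$ are unitarily isomorphic for every pair $(i,j)$, and in particular either all vanish or none do.

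Finally I would close the argument by contradiction. Suppose $\mathbb{P}_{\bl p}^{jj}=0$ for some $j$. Then $(\mathbb{P}_{\bl p}^{ij})^{*}\mathbb{P}_{\bl p}^{ij}=0$ yields $\mathbb{P}_{\bl p}^{ij}=0$, whence $\mathbb{P}_{\bl p}^{ii}=\mathbb{P}_{\bl p}^{ij}(\mathbb{P}_{\bl p}^{ij})^{*}=0$ for every $i$. Summing over $i$ and using Equation \eqref{rel} gives $\mathbb{P}_{\bl p}=0$, contradicting the non-triviality of $\mathbb{P}_{\bl p}$ recalled from Corollary \ref{fg} (it may also be read off from the proof of Corollary \ref{cor2.13}, where the joint kernel on the summand $\mb P_{\bl p}\big(\mb A^{(\l)}(\mb D^n)\big)$ has dimension $\chi_{\bl p}(1)^{2}>0$). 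The only real point requiring care is the adjoint identity $(\mathbb{P}_{\bl p}^{ij})^{*}=\mathbb{P}_{\bl p}^{ji}$; everything else is a direct consequence of the matrix-unit relations already proved.
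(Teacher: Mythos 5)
Your proof is correct and follows essentially the same route as the paper: both rest on the matrix-unit relations of Proposition \ref{proj1} to show that $\mathbb{P}_{\bl p}^{ij}$ identifies the range of $\mathbb{P}_{\bl p}^{jj}$ with that of $\mathbb{P}_{\bl p}^{ii}$, and then derive a contradiction with $\sum_i\mathbb{P}_{\bl p}^{ii}=\mathbb{P}_{\bl p}\neq 0$. The only (harmless) difference is in packaging: you prove $(\mathbb{P}_{\bl p}^{ij})^{*}=\mathbb{P}_{\bl p}^{ji}$ and exhibit $\mathbb{P}_{\bl p}^{ij}$ as a partial isometry, whereas the paper establishes bijectivity of $\mathbb{P}_{\bl p}^{ij}$ between the two ranges directly from the algebraic relations and invokes the open mapping theorem.
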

\begin{proof}
From Proposition \ref{proj1}, we have
$$
\mathbb{P}_{\bl p}^{ij}\mathbb{P}_{\bl p}^{jj} = \mathbb{P}_{\bl p}^{ij} = \mathbb{P}_{\bl p}^{ii}\mathbb{P}_{\bl p}^{ij},
$$ and it then follows that $\mathbb{P}_{\bl p}^{ij}\mathbb{P}_{\bl p}^{jj}\big(\mb A^{(\l)}(\mb D^n)\big) \subseteq \mathbb{P}_{\bl p}^{ii}\big(\mb A^{(\l)}(\mb D^n)\big)$. Also for $f\in \mb A^{(\l)}(\mb D^n)$,
$$
\mathbb{P}_{\bl p}^{ii}f = \mathbb{P}_{\bl p}^{ij}\mathbb{P}_{\bl p}^{ji}f =  \mathbb{P}_{\bl p}^{ij}\mathbb{P}_{\bl p}^{jj}\mathbb{P}_{\bl p}^{ji}f
$$
and thus $\mathbb{P}_{\bl p}^{ii}\big(\mb A^{(\l)}(\mb D^n)\big)\subseteq \mathbb{P}_{\bl p}^{ij}\mathbb{P}_{\bl p}^{jj}\big(\mb A^{(\l)}(\mb D^n)\big)$. Consequently, $\mathbb{P}_{\bl p}^{ij}$ is a surjective map from $\mathbb{P}_{\bl p}^{jj}\big(\mb A^{(\l)}(\mb D^n)\big)$ onto $\mathbb{P}_{\bl p}^{ii}\big(\mb A^{(\l)}(\mb D^n)\big)$.
Now $\mathbb{P}_{\bl p}^{ij}\mathbb{P}_{\bl p}^{jj}f = 0$ implies that $\mathbb{P}_{\bl p}^{ji}\mathbb{P}_{\bl p}^{ij}\mathbb{P}_{\bl p}^{jj}f = 0$ and hence $\mathbb{P}_{\bl p}^{jj}f = (\mathbb{P}_{\bl p}^{jj})^2f = 0$. This shows that $\mathbb{P}_{\bl p}^{ij}$ is injective on $\mathbb{P}_{\bl p}^{jj}\big(\mb A^{(\l)}(\mb D^n)\big)$. The operator  $\mathbb{P}_{\bl p}^{ij}$, being a finite linear combination of unitaries,  is bounded and hence an invertible map (by Open mapping theorem) from $\mathbb{P}_{\bl p}^{jj}\big(\mb A^{(\l)}(\mb D^n)\big)$ onto $\mathbb{P}_{\bl p}^{ii}\big(\mb A^{(\l)}(\mb D^n)\big)$. Since each $\mathbb{P}_{\bl p}$ is non-trivial, from Equation \eqref{new}, it follows that each $\mathbb{P}_{\bl p}^{ii}$ is non-trivial.
\end{proof}

\begin{prop}\label{reduce}
For each $\bl p\vdash n,\, 1\leq i\leq \chi_{\bl p}(1)$ and $k=1,\ldots,n,\, M_{s_k}\mb P_{\bl p}^{ij} = \mb P_{\bl p}^{ij}M_{s_k}$.
\end{prop}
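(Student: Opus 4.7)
The approach is completely straightforward: the key point is that each elementary symmetric polynomial $s_k$ is, by definition, invariant under the $\mathfrak{S}_n$-action, so the multiplication operator $M_{s_k}$ commutes with every $R_\sigma$, and therefore with any linear combination of them, in particular with $\mb P_{\bl p}^{ij}$.

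First I would verify the commutation $M_{s_k} R_\sigma = R_\sigma M_{s_k}$ pointwise. For any $f \in \mb A^{(\l)}(\mb D^n)$ and any $\bl z \in \mb D^n$,
\begin{align*}
R_\sigma(M_{s_k} f)(\bl z) &= (s_k \cdot f)(\sigma^{-1}\cdot \bl z) = s_k(\sigma^{-1}\cdot \bl z)\, f(\sigma^{-1}\cdot \bl z)\\
 &= s_k(\bl z)\, f(\sigma^{-1}\cdot \bl z) = M_{s_k}(R_\sigma f)(\bl z),
\end{align*}
using symmetry of $s_k$ in the penultimate step. This is the only substantive computation.

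Next I would invoke the explicit formula \eqref{fpf}, namely
\[
\mb P_{\bl p}^{ij} = \frac{\chi_{\bl p}(1)}{n!}\sum_{\sigma\in \mathfrak{S}_n} \bl\pi_{\bl p}^{ji}(\sigma^{-1})\, R_\sigma,
\]
and simply pull $M_{s_k}$ through the sum: since $M_{s_k}$ commutes with each $R_\sigma$ by the first step, and the scalar coefficients $\bl\pi_{\bl p}^{ji}(\sigma^{-1})$ pose no issue, we obtain $M_{s_k}\mb P_{\bl p}^{ij} = \mb P_{\bl p}^{ij} M_{s_k}$ immediately.

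There is essentially no obstacle here; the only subtlety worth flagging is that the argument relies exclusively on $s_k$ being symmetric, so the same proof shows that $M_p$ commutes with $\mb P_{\bl p}^{ij}$ for \emph{every} $p \in \mb C[\bl z]^{\mathfrak{S}_n}$. This stronger statement is in fact what is needed for the module-theoretic interpretation used later in the paper, namely that $\mb P_{\bl p}^{ii}\big(\mb A^{(\l)}(\mb D^n)\big)$ is a reducing submodule over the ring $\mb C[\bl z]^{\mathfrak{S}_n}$.
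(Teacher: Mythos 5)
Your proof is correct and is essentially the paper's own argument: both rest on the single computation $s_k(\sigma^{-1}\cdot\bl z)=s_k(\bl z)$, applied to the explicit formula \eqref{fpf}, with the only difference being that you isolate the commutation $M_{s_k}R_\sigma=R_\sigma M_{s_k}$ as a separate step before summing. The remark that the argument extends verbatim to all of $\mb C[\bl z]^{\mathfrak S_n}$ is accurate and consistent with how the paper uses the result.
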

\begin{proof} For $f\in \mb A^{(\l)}(\mb D^n)$, from the Equation \eqref{fpf} we have
\Bea
\big (M_{s_k}\mb P_{\bl p}^{ij}f \big ) (\bl z) &=& \frac{\chi_{\bl p}(1)}{n!}\sum_{\sigma\in\mathfrak S_n}\bl\pi_{\bl p}^{ji}(\sigma^{-1})M_{s_k}f(\sigma^{-1}\cdot\bl z)\\
&=&\frac{\chi_{\bl p}(1)}{n!}\sum_{\sigma\in \mathfrak S_n}\bl\pi_{\bl p}^{ji}(\sigma^{-1})s_k(\sigma^{-1}\cdot\bl z)f(\sigma^{-1}\cdot\bl z)\\
&=&\frac{\chi_{\bl p}(1)}{n!}\sum_{\sigma\in \mathfrak S_n}\bl\pi_{\bl p}^{ji}(\sigma^{-1})(s_kf)(\sigma^{-1}\cdot\bl z)\\
&=&\big ( \mb P_{\bl p}^{ij} M_{s_k}f \big )(\bl z).
\Eea
This completes the proof.
\end{proof}
In particular for each $\bl p\vdash n$ and $i,\, 1\leq i\leq \chi_{\bl p}(1)$, the projections $\mathbb{P}_{\bl p}^{ii}$ commute with $M_{s_k}$ for each $k, 1\leq k\leq n$ and we have the following corollary.
\begin{cor}\label{fd}
$\mb P_{\bl p}^{ii}\big(\mb A^{(\l)}(\mb D^n)\big)$ is a joint reducing subspace for $M_{s_k},k=1,\ldots,n,$ for every partition $\bl p$ of $n$ and for each $i,\, 1\leq i\leq \chi_{\bl p}(1)$.
\end{cor}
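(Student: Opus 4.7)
The plan is to obtain this corollary as an essentially immediate consequence of Proposition \ref{reduce} together with the fact, established in Corollary \ref{decomp}, that $\mathbb{P}_{\bl p}^{ii}$ is an orthogonal projection. Recall the standard criterion from operator theory: a closed subspace $\mathcal M \subseteq \mathcal H$ is a reducing subspace for a bounded operator $T$ on $\mathcal H$ if and only if the orthogonal projection $P_{\mathcal M}$ onto $\mathcal M$ commutes with $T$.

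First I would specialize Proposition \ref{reduce} to the diagonal case $j = i$, which yields
\[ M_{s_k}\mathbb{P}_{\bl p}^{ii} = \mathbb{P}_{\bl p}^{ii} M_{s_k}, \quad 1 \le k \le n. \]
By Corollary \ref{decomp}, $\mathbb{P}_{\bl p}^{ii}$ is a self-adjoint idempotent, hence the orthogonal projection of $\mb A^{(\l)}(\mb D^n)$ onto its range $\mathbb{P}_{\bl p}^{ii}\big(\mb A^{(\l)}(\mb D^n)\big)$. Since $\mathbb{P}_{\bl p}^{ii}$ commutes with each $M_{s_k}$, the range $\mathbb{P}_{\bl p}^{ii}\big(\mb A^{(\l)}(\mb D^n)\big)$ is invariant under every $M_{s_k}$; taking adjoints in the commutation relation (which preserves it since $\mathbb{P}_{\bl p}^{ii}$ is self-adjoint) gives that $M_{s_k}^*$ also leaves this subspace invariant. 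Thus $\mathbb{P}_{\bl p}^{ii}\big(\mb A^{(\l)}(\mb D^n)\big)$ reduces each $M_{s_k}$, and being reducing for every $M_{s_k}$ is exactly the statement of joint reducibility.

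There is really no obstacle here: the entire content of the corollary was already encoded in Proposition \ref{reduce} and the self-adjointness in Corollary \ref{decomp}. The only thing to flag is the implicit invocation of the standard fact that commutation of the projection with an operator $T$ is equivalent to joint reducibility of the range for $T$ and $T^*$; this can be stated in a single line of proof without further computation.
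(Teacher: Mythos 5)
Your proof is correct and follows exactly the paper's route: the paper also obtains the corollary immediately from Proposition \ref{reduce} (specialized to $j=i$) together with the fact from Corollary \ref{decomp} that $\mathbb{P}_{\bl p}^{ii}$ is an orthogonal projection, via the standard equivalence between commutation of the projection with $M_{s_k}$ and reducibility of its range. Nothing is missing.
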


\subsection{$\boldsymbol{\Gamma_n}$ - Contractions} Set $M_{s_k}^{(\bl p, i)} := M_{s_k}{\mid_{\mb P_{\bl p}^{ii}\big(\mb A^{(\l)}(\mb D^n)\big)}},$ $1\leq k \leq n.$ To find the spectrum of the
commuting $n$-tuple $(M_{s_1}^{(\bl p, i)}, \ldots , M_{s_n}^{(\bl p, i)}),$
we first prove, following \cite[Lemma 1.2]{JLT}, a Proposition giving a spectral inclusion for the direct sum of two commuting $n$-tuples.
\begin{prop}\label{Joe}
Let $\boldsymbol S_1$ and $\bl S_2$ be two commuting $n$-tuples of bounded linear operators acting the Hilbert spaces $\m H_1$ and $\m H_2,$ respectively. Then the Taylor joint  spectrum $\sigma({\bl S_1})$ and  $\sigma(\bl S_2)$ are contained in the Taylor joint  spectrum $\sigma(\boldsymbol S_1 \oplus \bl S_2).$
\end{prop}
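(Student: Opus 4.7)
The plan is to argue via the Koszul complex, which is the object used to define the Taylor joint spectrum. Recall that a point $\bl\lambda = (\lambda_1,\ldots,\lambda_n) \in \mb C^n$ lies outside $\sigma(\bl S)$ precisely when the Koszul complex $K^\bullet(\bl S - \bl\lambda;\,\m H)$, whose boundary maps are built from the operators $S_i - \lambda_i$ acting on $\m H \otimes \Lambda^\bullet \mb C^n$, is exact at every stage. So the key identification I need is between the Koszul complex of $\bl S_1 \oplus \bl S_2$ on $\m H_1 \oplus \m H_2$ and the direct sum of the two individual Koszul complexes.

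First, I will unwind the definitions to observe that, for each $k$, the component $(\m H_1 \oplus \m H_2) \otimes \Lambda^k \mb C^n$ decomposes orthogonally as $(\m H_1 \otimes \Lambda^k \mb C^n) \oplus (\m H_2 \otimes \Lambda^k \mb C^n)$, and that the Koszul boundary $d_k(\bl S_1 \oplus \bl S_2 - \bl\lambda)$ is block diagonal with respect to this decomposition, with blocks $d_k(\bl S_1 - \bl\lambda)$ and $d_k(\bl S_2 - \bl\lambda)$. This is immediate since each $S_i^{(1)} \oplus S_i^{(2)} - \lambda_i$ preserves the two summands and restricts to $S_i^{(j)} - \lambda_i$ on each.

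From this block decomposition, one deduces that
\[
H^k\bigl(K^\bullet(\bl S_1 \oplus \bl S_2 - \bl\lambda)\bigr) \;\cong\; H^k\bigl(K^\bullet(\bl S_1 - \bl\lambda)\bigr) \,\oplus\, H^k\bigl(K^\bullet(\bl S_2 - \bl\lambda)\bigr)
\]
because taking homology commutes with finite direct sums of chain complexes. Consequently, the complex for $\bl S_1 \oplus \bl S_2$ is exact if and only if both component complexes are exact.

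Taking the contrapositive finishes the proof: if $\bl\lambda \in \sigma(\bl S_1)$, then $H^k(K^\bullet(\bl S_1 - \bl\lambda)) \neq 0$ for some $k$, hence $H^k(K^\bullet(\bl S_1 \oplus \bl S_2 - \bl\lambda)) \neq 0$, so $\bl\lambda \in \sigma(\bl S_1 \oplus \bl S_2)$; and symmetrically for $\bl S_2$. This gives $\sigma(\bl S_1) \cup \sigma(\bl S_2) \subseteq \sigma(\bl S_1 \oplus \bl S_2)$, which is exactly the asserted inclusion. There is no real obstacle here: the only point that needs care is being explicit about the block structure of the Koszul boundary with respect to the orthogonal decomposition of $\m H_1 \oplus \m H_2$, and the standard fact that homology is additive over direct sums of complexes; the rest is formal.
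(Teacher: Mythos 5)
Your proof is correct, and it takes a somewhat different (more self-contained) route than the paper. The paper forms the short exact sequence $0\to\m H_1\oplus\{0\}\to\m H_1\oplus\m H_2\to\{0\}\oplus\m H_2\to 0$ and simply invokes Lemma 1.2 of Taylor's original paper, which yields a long exact sequence relating the Koszul homologies of the three modules; the desired inclusion is then read off. You instead prove the underlying fact directly: the Koszul complex of $\bl S_1\oplus\bl S_2$ at $\bl\lambda$ is the direct sum of the Koszul complexes of $\bl S_1$ and $\bl S_2$ (the boundaries are block diagonal because each $S_i^{(1)}\oplus S_i^{(2)}-\lambda_i$ preserves the two summands), and homology is additive over direct sums of complexes. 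Your version buys transparency on a point the paper leaves implicit: for a general, non-split short exact sequence the long exact sequence alone does not force the homology of the sub- or quotient module to vanish when the middle module's homology does --- the connecting maps could be isomorphisms, which is why the Taylor spectrum of a restriction to a merely invariant subspace need not be contained in the spectrum of the ambient operator. In your argument the splitting is built in from the outset via the block structure, and you in fact obtain the stronger conclusion $\sigma(\bl S_1\oplus\bl S_2)=\sigma(\bl S_1)\cup\sigma(\bl S_2)$, of which the stated proposition is the easy half.
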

\begin{proof}
Let $\iota: \mathcal H_1 \oplus \{0\}  \to \mathcal H_1 \oplus \mathcal H_2$ be the inclusion map, $(f,0) \mapsto (f,0)$ and $P:\mathcal H_1 \oplus \mathcal H_2 \to \{0\} \oplus  \mathcal  H_2 $ be the projection, $(f,g) \mapsto (0,g).$
Apply Lemma 1.2  of \cite{JLT} to the short exact sequence  $$0\to\mathcal H_1 \oplus \{0\} \stackrel{\iota}{\to} \mathcal H_1 \oplus \mathcal H_2 \stackrel{P}{\to} \{0\} \oplus  \mathcal  H_2 \to 0$$ and the direct sum
$\bl S_1 \oplus \bl S_2$ to complete the proof.
\end{proof}
Since $\mathbb P_{\bl p}^{ii}K^{(\l)}(\cdot, w)$ is the reproducing kernel for
$\mathbb P_{\bl p}^{ii}\big (\mb A^{(\l)}(\mb D^n) \big ),$
it can vanish only on a set $X\subseteq \mb D^n$ such that the real dimension of $X$ 
is at most $2n-2$. Also,
\bea \label{pii}
M_{s_i}^{(\bl p ,i)} \mathbb P_{\bl p}^{ii}K^{(\l)}(\cdot, w) = \overline{s_i(w)} \mathbb P_{\bl p}^{ii}K^{(\l)}(\cdot, w),\eea
and therefore,
 $\bl s(\mathbb D^n \setminus X) \subseteq \sigma(M_{s_1}^{(\bl p, i)}, \ldots , M_{s_n}^{(\bl p, i)}).$  Following the usual convention, set $\Gamma_n = {\rm clos}({\mathbb G}_n)$ and note that $\Gamma_n = \bl s\big({\rm clos}{(\mb D^n})\big).$
\begin{thm}
The Taylor  joint spectrum of the $n$-tuple $(M_{s_1}^{(\bl p, i)}, \ldots , M_{s_n}^{(\bl p, i)})$ is  $\Gamma_n.$
\end{thm}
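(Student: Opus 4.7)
The plan is to establish the two inclusions $\sigma(M_{s_1}^{(\bl p, i)}, \ldots, M_{s_n}^{(\bl p, i)}) \subseteq \Gamma_n$ and $\Gamma_n \subseteq \sigma(M_{s_1}^{(\bl p, i)}, \ldots, M_{s_n}^{(\bl p, i)})$ separately. The eigenvector computation \eqref{pii} already supplies $\bl s(\mb D^n \setminus X) \subseteq \sigma(M_{s_1}^{(\bl p, i)}, \ldots, M_{s_n}^{(\bl p, i)})$ as noted in the text preceding the theorem, so most of the work is a density argument on one side and an appeal to the spectral mapping theorem on the other.

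For the upper inclusion, I would use the fine decomposition \eqref{finedecomp}, which writes $\mb A^{(\l)}(\mb D^n)$ as a (finite) Hilbert-space direct sum of the subspaces $\mb P_{\bl p}^{ii}\big(\mb A^{(\l)}(\mb D^n)\big)$, each of which is reducing for the commuting tuple $(M_{s_1}, \ldots, M_{s_n})$ by Corollary \ref{fd}. Iterated application of Proposition \ref{Joe} then gives $\sigma(M_{s_1}^{(\bl p, i)}, \ldots, M_{s_n}^{(\bl p, i)}) \subseteq \sigma(M_{s_1}, \ldots, M_{s_n})$ on the full space $\mb A^{(\l)}(\mb D^n)$. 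The coordinate tuple $(M_{z_1}, \ldots, M_{z_n})$ on $\mb A^{(\l)}(\mb D^n)$ has joint Taylor spectrum equal to $\overline{\mb D^n}$, which is standard for weighted Bergman spaces on the polydisc. Applying Taylor's spectral mapping theorem to the polynomial tuple $\bl s = (s_1, \ldots, s_n)$ of $(M_{z_1}, \ldots, M_{z_n})$ yields $\sigma(M_{s_1}, \ldots, M_{s_n}) = \bl s\big(\overline{\mb D^n}\big) = \Gamma_n$, completing the upper bound.

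For the lower inclusion, I would upgrade $\bl s(\mb D^n \setminus X) \subseteq \sigma$ to $\Gamma_n \subseteq \sigma$ by a density argument. Since $X$ has real dimension at most $2n-2$ inside the real $2n$-dimensional manifold $\mb D^n$, it is nowhere dense, so any $w_0 \in \mb D^n$ is a limit of a sequence $w_k \to w_0$ with $w_k \notin X$. By continuity of $\bl s$, one obtains $\bl s(w_k) \to \bl s(w_0)$, showing that $\bl s(\mb D^n \setminus X)$ is dense in $\bl s(\mb D^n) = \mathbb G_n$. Since the Taylor joint spectrum is a closed subset of $\mb C^n$, taking closures yields $\Gamma_n = \overline{\mathbb G_n} = \overline{\bl s(\mb D^n \setminus X)} \subseteq \sigma(M_{s_1}^{(\bl p, i)}, \ldots, M_{s_n}^{(\bl p, i)})$.

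The likely obstacle is not any one step but rather the need to invoke somewhat heavy background: the Taylor spectral mapping theorem for commuting tuples and the identification of the coordinate-tuple spectrum with $\overline{\mb D^n}$. Both are standard, but the cleanest write-up will explicitly cite them. A secondary subtlety is being careful that the density of $\bl s(\mb D^n \setminus X)$ in $\mathbb G_n$ uses only nowhere-density of $X$ and continuity of $\bl s$, not openness of $\bl s$, since the continuous image of a dense set is not in general dense.
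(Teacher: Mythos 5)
Your proposal is correct and follows essentially the same route as the paper: Proposition \ref{Joe} plus the Taylor spectral mapping theorem give the upper inclusion $\sigma(M_{s_1}^{(\bl p, i)}, \ldots , M_{s_n}^{(\bl p, i)})\subseteq \bl s(\sigma(M_1,\ldots,M_n))=\Gamma_n$, and the eigenvector relation \eqref{pii} together with the closedness of the Taylor spectrum and the density of $\bl s(\mb D^n\setminus X)$ in $\Gamma_n$ give the reverse inclusion. The only cosmetic difference is that the paper passes through the inclusion $\mathbb G_n\setminus\bl s(X)\subseteq \bl s(\mathbb D^n\setminus X)$ rather than arguing density of $\bl s(\mb D^n\setminus X)$ directly from the nowhere-density of $X$.
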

\begin{proof}
From Proposition \ref{Joe}, it follows that $\sigma(M_{s_1}^{(\bl p, i)}, \ldots , M_{s_n}^{(\bl p, i)} ) \subseteq  \sigma (M_{s_1}, \ldots , M_{s_n}).$
The Taylor functional calculus shows that  $\sigma (M_{s_1}, \ldots , M_{s_n})= \bl s(\sigma(M_1, \ldots , M_n) = \Gamma_n.$  Thus we have
$$\mathbb G_n \setminus \bl s(X) \subseteq \bl s(\mathbb D^n \setminus X) \subseteq \sigma(M_{s_1}^{(\bl p, i)}, \ldots , M_{s_n}^{(\bl p, i)} ) \subseteq \Gamma_n.$$
Since ${\rm clos}\big({\mathbb G_n\setminus \bl s(X)}\big)=\Gamma_n$ and  the spectrum is compact, the proof is complete.
\end{proof}
The computation of the Taylor joint spectrum has some immediate applications. Commuting $n$ -tuples of joint weighted shifts are discussed in \cite{JL}. They have shown (see \cite[Corollary 3]{JL}), among other things, that the spectrum of a joint weighted shift must be Reinhardt (invariant under the action of the torus group). It is easy to see  that $\Gamma_n$ is not Reinhardt.  Indeed $(1,\tfrac{1}{2}, \dots , 0)$ is in $\Gamma_n$ while $(1, -\tfrac{1}{2}, 0, \ldots , 0)$ is not in $\Gamma_n.$
This follows from the observation that $(\mu_1, \ldots , \mu_k, 0, \ldots , 0)$ is in $\Gamma_n$ if and only if $(\mu_1, \ldots , \mu_k)$  is in $\Gamma_k.$ 
therefore we have proved the following Corollary.
\begin{cor}
The $n$- tuple  $\big(M_{s_1}^{(\bl p, i)}, \ldots , M_{s_n}^{(\bl p, i)}\big)$ is not unitarily equivalent to any joint weighted shift.
\end{cor}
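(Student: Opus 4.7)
The proof is essentially a one-line contrapositive argument that synthesizes the three pieces just assembled. The plan is to argue: if $\big(M_{s_1}^{(\bl p, i)}, \ldots, M_{s_n}^{(\bl p, i)}\big)$ were unitarily equivalent to a joint weighted shift, then because the Taylor joint spectrum is a unitary invariant, its spectrum $\Gamma_n$ would coincide with that of a joint weighted shift. By the Jewell--Lubin result cited (\cite[Corollary 3]{JL}), this spectrum must be Reinhardt, i.e.\ invariant under the coordinate-wise action of the $n$-torus. So it suffices to exhibit a single point of $\Gamma_n$ whose image under some torus element leaves $\Gamma_n$.

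The concrete witness is the pair $(1, \tfrac{1}{2}, 0, \ldots, 0)$ versus $(1, -\tfrac{1}{2}, 0, \ldots, 0)$ as indicated in the remark preceding the statement. First I would justify the observation that $(\mu_1, \ldots, \mu_k, 0, \ldots, 0) \in \Gamma_n$ if and only if $(\mu_1, \ldots, \mu_k) \in \Gamma_k$: one direction is trivial by padding $(z_1, \ldots, z_k)$ with zeros, and the other follows because if $\bl s(\bl z) = (\mu_1, \ldots, \mu_k, 0, \ldots, 0)$ then $\prod_i (t-z_i)$ factors as $t^{n-k}\big(t^k - \mu_1 t^{k-1} + \cdots + (-1)^k \mu_k\big)$, forcing $n-k$ of the $z_i$ to be zero. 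Given this, $(1, \tfrac{1}{2}, 0, \ldots, 0) \in \Gamma_n$ reduces to $(1, \tfrac{1}{2}) \in \Gamma_2$, realized by the roots of $t^2 - t + \tfrac{1}{2} = 0$, namely $\tfrac{1\pm i}{2}$, which both lie in $\overline{\mathbb D}$.

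On the other hand, $(1, -\tfrac{1}{2}, 0, \ldots, 0) \notin \Gamma_n$ reduces to $(1,-\tfrac{1}{2}) \notin \Gamma_2$, which follows since the roots of $t^2 - t - \tfrac{1}{2} = 0$ are $\tfrac{1 \pm \sqrt{3}}{2}$, one of which has modulus exceeding $1$. Since $(1, -\tfrac{1}{2}, 0, \ldots, 0)$ is obtained from $(1, \tfrac{1}{2}, 0, \ldots, 0)$ by the torus action $(1,-1,1,\ldots,1)$ on the second coordinate, $\Gamma_n$ is not Reinhardt, completing the proof.

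There is no real obstacle here; every ingredient is already in place. The only minor step requiring care is the equivalence $(\mu_1, \ldots, \mu_k, 0, \ldots, 0) \in \Gamma_n \iff (\mu_1, \ldots, \mu_k) \in \Gamma_k$, which the authors state without proof and which I would include for completeness.
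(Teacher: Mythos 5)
Your proposal is correct and is essentially identical to the paper's argument, which likewise invokes the Jewell--Lubin result that a joint weighted shift has Reinhardt Taylor spectrum and uses the same witness points $(1,\tfrac12,0,\ldots,0)\in\Gamma_n$ and $(1,-\tfrac12,0,\ldots,0)\notin\Gamma_n$. You merely supply the root computations and the reduction to $\Gamma_k$ explicitly, which the paper states without proof.
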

Let $X\subseteq \mathbb C^n$ be a polynomially convex set. A commuting $n$ - tuple $\bl T$  of operators is said to admit $X$ as a spectral set if $\|p(\bl T)\| \leq \|p\|_{\infty, X}:=\sup\{|p(\bl z)| : \bl z \in X\}.$  In the particular case of $X=\Gamma_n,$ such a   commuting $n$-tuple $\bl T$ is said to be a $\Gamma_n$-contraction. Since the restriction of a $\Gamma_n$-contraction to a reducing subspace is again a $\Gamma_n$-contraction, the proof of the following theorem is evident  from \cite[Proposition 2.13 and Corollary 3.11]{BS}.

\begin{thm}\label{thmB}
The commuting $n$-tuple $ (M_{s_1},\ldots,M_{s_n})$  acting on the Hilbert space $\mb P_{\bl p}^{ii}\big(\mb A^{(\l)}(\mb D^n)\big)$ is a $\Gamma_n$-contraction for every partition  $\bl p$ of $n,$ $1\leq i \leq \chi_{\bl p}(1)$ and all $\l\geq 1.$
\end{thm}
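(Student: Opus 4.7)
The plan is to reduce the claim for the reducing summand to the corresponding assertion for the ambient Hilbert module $\mathbb A^{(\lambda)}(\mathbb D^n)$, which is precisely what the cited results from \cite{BS} supply. The argument has three essentially independent ingredients: a statement about the full module, a passage to reducing subspaces, and the observation that the passage preserves the defining inequality of a $\Gamma_n$-contraction.

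First, I would invoke \cite[Proposition 2.13 and Corollary 3.11]{BS} to record that, for every $\lambda \geq 1$, the commuting $n$-tuple $(M_{s_1},\ldots,M_{s_n})$ acting on the whole weighted Bergman module $\mathbb A^{(\lambda)}(\mathbb D^n)$ is a $\Gamma_n$-contraction, so that
$$\bigl\|p(M_{s_1},\ldots,M_{s_n})\bigr\| \;\leq\; \|p\|_{\infty,\Gamma_n}$$
for every polynomial $p$ in $n$ variables. This is the main analytic content and fortunately does not need to be reproved here.

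Next, I would appeal to Corollary~\ref{fd}, which states that $\mathbb P_{\bl p}^{ii}\big(\mathbb A^{(\lambda)}(\mathbb D^n)\big)$ is a joint reducing subspace for $(M_{s_1},\ldots,M_{s_n})$. Denoting by $P_{\bl p}^{ii}$ the orthogonal projection onto this subspace, the reducing property gives $P_{\bl p}^{ii} M_{s_k} = M_{s_k} P_{\bl p}^{ii}$ for every $k$, and consequently for every polynomial $p$,
$$p\bigl(M_{s_1}^{(\bl p,i)},\ldots,M_{s_n}^{(\bl p,i)}\bigr) \;=\; p(M_{s_1},\ldots,M_{s_n})\big|_{\mathbb P_{\bl p}^{ii}(\mathbb A^{(\lambda)}(\mathbb D^n))}.$$
Since the norm of the restriction of a bounded operator to an invariant subspace is dominated by the norm of the operator, combining this with the inequality from the previous step yields
$$\bigl\|p\bigl(M_{s_1}^{(\bl p,i)},\ldots,M_{s_n}^{(\bl p,i)}\bigr)\bigr\| \;\leq\; \bigl\|p(M_{s_1},\ldots,M_{s_n})\bigr\| \;\leq\; \|p\|_{\infty,\Gamma_n},$$
which is exactly the defining inequality for a $\Gamma_n$-contraction.

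I do not anticipate a genuine obstacle: once the ambient $\Gamma_n$-contractivity from \cite{BS} is in hand and Corollary~\ref{fd} has identified $\mathbb P_{\bl p}^{ii}\big(\mathbb A^{(\lambda)}(\mathbb D^n)\big)$ as a joint reducing subspace, the conclusion follows by the elementary observation that $\Gamma_n$-contractivity is preserved under restriction to a joint reducing subspace. The only point to double-check is that the polynomial functional calculus commutes with the restriction, and this is immediate from the fact that $P_{\bl p}^{ii}$ commutes with each $M_{s_k}$.
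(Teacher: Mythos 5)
Your proposal is correct and follows essentially the same route as the paper: the paper's proof is exactly the observation that the restriction of a $\Gamma_n$-contraction to a joint reducing subspace is again a $\Gamma_n$-contraction, applied to the ambient tuple whose $\Gamma_n$-contractivity comes from \cite[Proposition 2.13 and Corollary 3.11]{BS}, with Corollary \ref{fd} supplying the reducing property. You have merely written out the elementary verification that the paper leaves implicit.
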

\begin{rem} It is observed in \cite[p.  47]{AY}  that the Taylor joint spectrum of a $\Gamma_2$-contraction is a subset of $\Gamma_2.$ This is easily seen to be true of a $\Gamma_n$-contraction using polynomial convexity of $\Gamma_n.$ Hence the Taylor joint spectrum of the commuting $n$-tuple  $\big(M_{s_1}^{(\bl p, i)}, \ldots , M_{s_n}^{(\bl p, i)}\big)$
is contained in $\Gamma_n.$  Here we emphasize that the  $n$-tuple $\big(M_{s_1}^{(\bl p, i)}, \ldots , M_{s_n}^{(\bl p, i)}\big)$ is not only a $\Gamma_n$-contraction but admits its spectrum $\Gamma_n$ as a spectral set.
\end{rem}
\section{inequivalence}
Having obtained the decomposition \eqref{finedecomp} and having shown
that each $\mb P_{\bl p}^{ii}\big(\mb A^{(\l)}(\mb D^n)\big)$ is a reducing sub-module (Corollary \ref{fd}) over the ring of symmetric polynomials
$\mb C[\bl z]^{\mathfrak S_n}$ of the Hilbert module $\mb A^{(\l)}(\mb D^n)$, it is natural to ask whether these sub-modules are inequivalent for distinct pairs ($\bl p,$ $i$) of a partition $\bl p$ of $n$ and $i,$ $1\leq i \leq \chi_{\bl p}(1)$.  The following theorem provides a partial answer.
\begin{thm}\label{m1}
If $\bl p$ and $\bl q$ are two partitions of $n$ such that $\chi_{\bl p}(1)\not = \chi_{\bl q}(1),$ then
\begin{enumerate}
\item[(a)] the sub-modules  $\mb P_{\bl p}^{ii}\big(\mb A^{(\l)}(\mb D^n)\big)$ and $\mb P_{\bl q}^{jj}\big(\mb A^{(\l)}(\mb D^n)\big)$ are not equivalent for any $i,j,$ $1\leq i \leq \chi_{\bl p}(1)$ and $1\leq j \leq \chi_{\bl q}(1).$
\item[(b)] the sub-modules  $\mb P_{\bl p}\big(\mb A^{(\l)}(\mb D^n)\big)$ and $\mb P_{\bl q}\big(\mb A^{(\l)}(\mb D^n)\big)$ are not equivalent.
\end{enumerate}
\end{thm}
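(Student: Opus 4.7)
The plan is to exploit the rank invariant for locally free Hilbert modules. By Corollary \ref{cor2.13} and the remark in the introduction (obtained ``using similar arguments''), we already know that $\mb P_{\bl p}\big(\mb A^{(\l)}(\mb D^n)\big)$ is locally free of rank $\chi_{\bl p}(1)^2$ on $\mb G_n\setminus \bl s(\m Z)$, and that each reducing summand $\mb P_{\bl p}^{ii}\big(\mb A^{(\l)}(\mb D^n)\big)$ is locally free of rank $\chi_{\bl p}(1)$. Under the hypothesis $\chi_{\bl p}(1)\neq \chi_{\bl q}(1)$, the ranks in both parts (a) and (b) differ, so inequivalence will follow once we verify that any unitary module map over $\mb C[\bl z]^{\mathfrak S_n}$ preserves the local rank.

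For part (a), suppose for contradiction there exists a unitary module map
\[
U:\mb P_{\bl p}^{ii}\big(\mb A^{(\l)}(\mb D^n)\big)\longrightarrow \mb P_{\bl q}^{jj}\big(\mb A^{(\l)}(\mb D^n)\big),
\]
that is, $U M_{s_k}^{(\bl p, i)} = M_{s_k}^{(\bl q, j)} U$ for $k = 1,\ldots, n$. First I would pass to the adjoints to obtain $U\big(M_{s_k}^{(\bl p,i)} - s_k(\bl w)\big)^* = \big(M_{s_k}^{(\bl q,j)} - s_k(\bl w)\big)^* U$ for every $\bl w \in \mb D^n$, so that $U$ restricts to a Hilbert space isomorphism between the joint kernels
\[
\textstyle\bigcap_{k=1}^n \ker \big(M_{s_k}^{(\bl p,i)} - s_k(\bl w)\big)^*\;\longrightarrow\;\textstyle\bigcap_{k=1}^n \ker \big(M_{s_k}^{(\bl q,j)} - s_k(\bl w)\big)^*.
\]
By Lemma \ref{dp}(2), these joint kernels are naturally identified with the module tensor products $\mb P_{\bl p}^{ii}\big(\mb A^{(\l)}(\mb D^n)\big)\otimes_{\mb C[\bl z]^{\mathfrak S_n}}\mb C_{\bl w}$ and $\mb P_{\bl q}^{jj}\big(\mb A^{(\l)}(\mb D^n)\big)\otimes_{\mb C[\bl z]^{\mathfrak S_n}}\mb C_{\bl w}$, respectively. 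Their dimensions are the local ranks, namely $\chi_{\bl p}(1)$ and $\chi_{\bl q}(1)$, for $\bl w\in \mb D^n\setminus \m Z$. Thus $\chi_{\bl p}(1)=\chi_{\bl q}(1)$, contradicting the hypothesis.

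Part (b) is proved in exactly the same way, replacing $\mb P_{\bl p}^{ii}$ by $\mb P_{\bl p}$ and the rank $\chi_{\bl p}(1)$ by $\chi_{\bl p}(1)^2$; the Corollary \ref{cor2.13} rank count and the same tensor-product identification via Lemma \ref{dp}(2) yield $\chi_{\bl p}(1)^2 = \chi_{\bl q}(1)^2$, again contradicting the hypothesis. There is no serious obstacle here: the only point requiring care is that the intertwining is with respect to the $M_{s_k}$'s (not the coordinate multiplications $M_k$), so one uses the joint kernels of $M_{s_k}-s_k(\bl w)$ — which is precisely the invariant identified in Lemma \ref{dp}(2) — rather than the larger joint kernel of the $M_k - w_k$. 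This is the conceptual subtlety, but it is already built into the framework we have set up.
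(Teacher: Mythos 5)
Your overall strategy --- distinguishing the sub-modules by the rank of the associated locally free module, computed as the dimension of the joint kernel $\cap_{k=1}^n\ker\big(M_{s_k}-s_k(\bl w)\big)^*$ --- is exactly the paper's, and your justification that a unitary module map preserves this dimension (it intertwines the joint kernels, which Lemma \ref{dp}(2) identifies with the module tensor products) is correct; it usefully spells out the step the paper compresses into the phrase ``the rank being an invariant for locally free Hilbert modules.'' Part (b) is complete as written, since Corollary \ref{cor2.13} is an established prior result.

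For part (a), however, there is a genuine gap: you take as known that $\mb P_{\bl p}^{ii}\big(\mb A^{(\l)}(\mb D^n)\big)$ is locally free of rank $\chi_{\bl p}(1)$, citing only the introduction's announcement that this is obtained ``using similar arguments.'' That announcement is a forward reference to the proof of this very theorem --- the rank computation for the summands $\mb P_{\bl p}^{ii}$ appears nowhere earlier in the paper --- so your argument for (a) is circular at its key step. What is needed, and what the paper's proof actually supplies, is the following. By Corollary \ref{fd} the joint kernel splits as
\[
\bigcap_{k=1}^n\ker\big(M^{(\bl p)}_{s_k}-s_k(\bl w)\big)^*=\bigoplus_{i=1}^{\chi_{\bl p}(1)}\bigcap_{k=1}^n\ker\big(M^{(\bl p, i)}_{s_k}-s_k(\bl w) \big)^*.
\]
By Proposition \ref{reduce} the operator $\mb P_{\bl p}^{ij}$ commutes with each $M_{s_k}$, and by the argument of Lemma \ref{projPii} it is a bounded invertible map of $\mb P_{\bl p}^{jj}\big(\mb A^{(\l)}(\mb D^n)\big)$ onto $\mb P_{\bl p}^{ii}\big(\mb A^{(\l)}(\mb D^n)\big)$; an invertible intertwiner identifies the joint kernels of the adjoints, so all $\chi_{\bl p}(1)$ summands above have the same dimension. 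Since the left-hand side has dimension $\chi_{\bl p}(1)^2$ (the Proposition preceding Corollary \ref{cor2.13}), each summand has dimension $\chi_{\bl p}(1)$, and local freeness of $\mb P_{\bl p}^{ii}\big(\mb A^{(\l)}(\mb D^n)\big)$ of that rank follows as in the proof of Theorem \ref{free}. Note that you cannot sidestep this by appealing to Theorem \ref{C} and Lemma \ref{dp}(3), since the paper has no analogue of Theorem \ref{C} for $\mb P_{\bl p}^{ii}\C[\bl z]$. Once this computation is inserted, your rank-invariance argument completes the proof.
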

\begin{proof}
From Corollary \ref{fd}, it follows that
\begin{equation*} \bigcap_{k=1}^n\ker\big(M^{(\bl p)}_{s_k}-s_k(\bl w)\big)^*=\bigoplus_{i=1}^{\chi_{\bl p}(1)}\bigcap_{k=1}^n\ker\big(M^{(\bl p, i)}_{s_k}-s_k(\bl w) \big)^*.
\end{equation*}
Arguments similar to the ones given in the proof of Lemma \ref{projPii} applied to the sub-modules $\mb P_{\bl p}^{ii}\big(\mb A^{(\l)}(\mb D^n)\big)$ shows that $\cap_{k=1}^n\ker\big(M^{(\bl p, i)}_{s_k}-s_k(\bl w) \big)^*$ are isomorphic for all $i,$
$1\leq i \leq \chi_{\bl p}(1).$ Thus $\dim\cap_{k=1}^n\ker\big(M^{(\bl p, i)}_{s_k}-s_k(\bl w) \big)^* =\chi_{\bl p}(1)$ for all $i.$ From the proof of Theorem \ref{free}, it follows that each of the sub-modules $\mb P_{\bl p}^{ii}\big(\mb A^{(\l)}(\mb D^n)\big)$ is locally free of rank $\chi_{\bl p}(1)$  on $\mathbb G_n\setminus \bl s(\mathcal Z).$ The rank being an invariant for locally free Hilbert modules, the proof of (a) is complete.  The proof of  (b) follows
from Corollary \ref{cor2.13}.\end{proof}
The theorem leaves open the question of equivalence when $\chi_{\bl p}(1) = \chi_{\bl q}(1).$
%
%
%
While we are not able to settle this question in its entirety, we answer it in the important special case of $\chi_{\bl p}(1) = 1 = \chi_{\bl q}(1),$ or equivalently, $\bl p = (n)$ and $\bl q = (1, \ldots , 1)$ since one dimensional representations of $\mathfrak S_n$ are the trivial and the sign representation.

We begin by setting up some notation which will be useful in the discussion to follow.
The length $\ell(\bl p)$ of a partition $\bl p$ of $n$ is the number of positive summands of $\bl p.$
For a positive integer $n,$ we define the following two subsets of $\mb Z_+^n:=\{(m_1, \ldots , m_n) \in \mathbb Z^n: m_1, \ldots , m_n \geq 0\}$:
\Bea
[n]=\{\bl m\in \mb Z_+^n:m_i\geq m_j \mbox{~for~} i<j\} \mbox{~and~}[\![n]\!]=\{\bl m\in \mb Z_+^n:m_i> m_j \mbox{~for~} i<j\}.
\Eea
 If  $\bl p\in [\![n]\!],$  then we can write $\bl p=\bl m+\bl\delta,$ where $\bl m\in[n]$ and $\bl\delta=(n-1,n-2,\ldots, 1,0).$ So,
\Bea
[\![n]\!]=\{\bl m+\bl\delta: \bl m\in[n]\}.
\Eea

Recall from equation \eqref{od} that for a partition $\bl p$ of $n,$ the linear map $\mb P_{\bl p}:\mb A^{(\l)}(\mb D^n)\to \mb A^{(\l)}(\mb D^n)$ by
\bea\label{proj}
\mb P_{\bl p}f=\frac{\chi_{\bl p}(1)}{n!}\sum_{\sigma\in\mathfrak S_n}\ov{\chi_{\bl p}(\sigma)}f\circ\sigma^{-1},
\eea
where $\chi_{\bl p}$ is the character of the representation corresponding to the partition $\bl p$ of $n.$
Choosing the partition $\bl p$ of $n$ to be $(n):=(n,0,\ldots,0)$ in Equation \eqref{proj}, it is easy to see that
\Bea
\mb P_{(n)}\big (\mb A^{(\l)}(\mb D^n) \big )=\{f\in \mb A^{(\l)}(\mb D^n):f\circ\sigma^{-1}=f \mbox{~for~} \sigma\in\mathfrak S_n\},
\Eea
that is, $\mb P_{(n)}\big (\mb A^{(\l)}(\mb D^n)\big )$ consists of symmetric functions in $\mb A^{(\l)}(\mb D^n).$  Thus $\mb A^{(\l)}_{\rm sym}(\mb D^n)=\mb P_{(n)}\big (\mb A^{(\l)}(\mb D^n)\big ).$ In view of \cite[Equation (3.1)]{BS},  the following proposition is a particular case of \cite[Proposition 3.6]{BS} for $\bl p=(n).$
\begin{prop}\label{sker}
 The reproducing kernel $K^{(\l)}_{\rm  sym}$ of $\mb A^{(\l)}_{\rm sym}(\mb D^n)$ is given explicitly by
the formula:
\Bea
K^{(\l)}_{\rm sym}(\bl z, \bl w)= \frac{1}{n!}\mr{per} \Big (\big (\!\!\big ((1-z_j\bar w_k)^{-\l} \big )\!\!\big )_{j,k=1}^n\Big
),\,\, \bl z,\bl w\in\mb D^n,
\Eea
where $\mbox{\rm per} \Big (\big (\!\! \big ( a_{ij} \big )\! \!\big )_{i,j=1}^n \Big ) = \sum_{\sigma \in \mathfrak S_n} \prod_{k=1}^n a_{k \sigma(k)}.$
\end{prop}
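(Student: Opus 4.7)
The plan is to invoke the standard fact that if $\mathcal M$ is a closed subspace of a reproducing kernel Hilbert space $(\mathcal H, K)$ and $P_{\mathcal M}$ denotes the orthogonal projection onto $\mathcal M$, then the reproducing kernel of $\mathcal M$ is given by $K_{\mathcal M}(\bl z, \bl w) = (P_{\mathcal M} K(\cdot, \bl w))(\bl z)$. So I would first note that the map $\mb P_{(n)}$ of Equation \eqref{proj}, specialized to the trivial character $\chi_{(n)} \equiv 1$, simplifies to $\mb P_{(n)} f = \tfrac{1}{n!}\sum_{\sigma\in \mathfrak S_n} f \circ \sigma^{-1}$, and by Corollary \ref{decomp} this is the orthogonal projection onto $\mb A^{(\l)}_{\rm sym}(\mb D^n) = \mb P_{(n)}\big(\mb A^{(\l)}(\mb D^n)\big)$.

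Next I would apply $\mb P_{(n)}$ to $K^{(\l)}(\cdot, \bl w)$ in the $\bl z$-variable. Using $(\sigma^{-1} \cdot \bl z)_j = z_{\sigma(j)}$ together with the product formula for $K^{(\l)}$, this yields
\Bea
K^{(\l)}_{\rm sym}(\bl z, \bl w) = \big(\mb P_{(n)} K^{(\l)}(\cdot, \bl w)\big)(\bl z) = \frac{1}{n!} \sum_{\sigma\in \mathfrak S_n} \prod_{j=1}^n (1 - z_{\sigma(j)} \bar w_j)^{-\l}.
\Eea
Re-indexing the product by $k = \sigma(j)$ (so $j = \sigma^{-1}(k)$) and then replacing $\sigma^{-1}$ by $\tau$ gives $\sum_{\tau \in \mathfrak S_n} \prod_{k=1}^n (1 - z_k \bar w_{\tau(k)})^{-\l}$, which by the very definition of the permanent equals $\mr{per}\big(\!\!\big((1-z_j\bar w_k)^{-\l}\big)\!\!\big)_{j,k=1}^n$.

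Finally, it should be observed that the resulting expression is automatically symmetric in the $\bl w$-variable as well, since the permanent is invariant under permutations of its columns; this confirms that $K^{(\l)}_{\rm sym}(\bl z, \bl w)$ is a hermitian, symmetric function in both $\bl z$ and $\bl w$, as it must be to reproduce functions in $\mb A^{(\l)}_{\rm sym}(\mb D^n)$. There is no real obstacle in the argument; the only point requiring care is the permutation bookkeeping that turns the symmetrization sum into the permanent, but this is purely combinatorial.
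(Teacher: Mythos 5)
Your proof is correct. The paper itself gives no argument here --- it simply cites \cite[Proposition 3.6]{BS} for general $\bl p$ --- and your computation is exactly the standard specialization of that result to the trivial character: $\mb P_{(n)}$ is the orthogonal projection onto $\mb A^{(\l)}_{\rm sym}(\mb D^n)$ (Lemma \ref{sa} and Corollary \ref{decomp}), the kernel of a closed subspace is the projection applied to $K^{(\l)}(\cdot,\bl w)$, and the permutation bookkeeping turning the symmetrization sum into the permanent is carried out correctly.
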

The Hilbert space $\mb A^{(\l)}_{\rm sym}(\mb D^n)$ can be thought of as a space of functions defined on the symmetrized polydisc $\mb{G}_n$ as follows. Recall that $\bl s$ is the symmetrization map and  note that
\Bea
\mb A^{(\l)}_{\rm sym}(\mb D^n)=\{f\in \mb A^{(\l)}(\mb D^n): f=g\circ \bl s \mbox{  ~for some~} g: \mb{G}_n \lo \mb{C}\mbox{  ~ holomorphic~} \}.
\Eea
Let
 \Bea
\mathcal{H}^{(\l)}(\mb{G}_n) := \{g: \mb{G}_n \lo \mb{C} \mbox{  ~is holomorphic~}: g\circ \bl s \in \mb A^{(\l)}(\mb D^n)  \}.
\Eea
The inner product on $\mathcal{H}^{(\l)}(\mb{G}_n)$ is given by
$\inner{f_1}{f_2}_{\mathcal{H}^{(\l)}(\mb{G}_n)}$ := $\inner{f_1 \circ s}{f_2\circ s}_{\mb A^{(\l)}(\mb D^n)}.$ Now, the following corollary is immediate from Proposition \ref{sker}.

\begin{cor}\label{sg}
 The reproducing kernel $K^{(\l)}_{\mb G_n}$ of $\m H^{(\l)}(\mb G_n)$ is given explicitly by
the formula:
$$K^{(\l)}_{\mb G_n}\big(\bl s(\bl z), \bl s(\bl w)\big)= \frac{1}{n!}\mr{per} \Big (\big (\!\!\big ((1-z_j\bar w_k)^{-\l} \big )\!\!\big )_{j,k=1}^n\Big
),\,\, \bl z,\bl w\in\mb D^n.$$
\end{cor}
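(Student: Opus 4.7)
The plan is to derive the formula by transporting the reproducing kernel from $\mb A^{(\l)}_{\rm sym}(\mb D^n)$ to $\m H^{(\l)}(\mb G_n)$ along the natural isometry given by the symmetrization map. First I would verify that the map $T:\m H^{(\l)}(\mb G_n)\to \mb A^{(\l)}_{\rm sym}(\mb D^n)$ defined by $T(g)=g\circ \bl s$ is a well-defined surjective isometry: well-definedness and isometry are built into the very definition of the inner product on $\m H^{(\l)}(\mb G_n)$, while surjectivity follows from the fact that every $f\in \mb A^{(\l)}_{\rm sym}(\mb D^n)$ is of the form $g\circ \bl s$ for a uniquely determined holomorphic function $g$ on $\mb G_n$ (this is the standard fact used in the characterization of $\mb A^{(\l)}_{\rm sym}(\mb D^n)$ already recalled just before the statement).

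Next I would write down the transport of kernel identity. Fix $\bl w\in\mb D^n$ and set $\bl u=\bl s(\bl w)\in\mb G_n$. For any $g\in\m H^{(\l)}(\mb G_n)$, on the one hand
\[
\inner{g}{K^{(\l)}_{\mb G_n}(\cdot,\bl u)}_{\m H^{(\l)}(\mb G_n)} = g(\bl u) = (g\circ\bl s)(\bl w),
\]
and on the other hand, by definition of the inner product together with the reproducing property of $K^{(\l)}_{\rm sym}$ on the range of $T$,
\[
(g\circ \bl s)(\bl w) = \inner{g\circ\bl s}{K^{(\l)}_{\rm sym}(\cdot,\bl w)}_{\mb A^{(\l)}(\mb D^n)} = \inner{g}{K^{(\l)}_{\rm sym}(\cdot,\bl w)\circ\bl s^{-1}_{\text{formal}}}_{\m H^{(\l)}(\mb G_n)}.
\]
More carefully: since $T$ is unitary, $T^*T^{\,}$ is the identity and the reproducing kernel of $\m H^{(\l)}(\mb G_n)$ at $\bl u=\bl s(\bl w)$ must satisfy
\[
K^{(\l)}_{\mb G_n}(\bl s(\bl z),\bl s(\bl w)) = K^{(\l)}_{\rm sym}(\bl z,\bl w)\qquad \text{for all } \bl z,\bl w\in\mb D^n.
\]
So the problem reduces to checking that the right-hand side of Proposition~\ref{sker} is $\mathfrak S_n$-invariant separately in $\bl z$ and in $\bl w$, which is what allows it to be pushed down to $\mb G_n\times\mb G_n$.

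This last check is immediate from properties of the permanent: the permanent of a matrix is invariant under arbitrary row permutations and arbitrary column permutations (this is what distinguishes it from the determinant). Permuting the $z_j$'s corresponds to permuting the rows of $\big(\!\big((1-z_j\bar w_k)^{-\l}\big)\!\big)_{j,k=1}^n$, while permuting the $\bar w_k$'s corresponds to permuting the columns; in both cases the permanent is unchanged. Hence $K^{(\l)}_{\rm sym}(\bl z,\bl w)$ descends, via the symmetrization map in each variable, to a function on $\mb G_n\times\mb G_n$, and this descended function is exactly $K^{(\l)}_{\mb G_n}$ by the transport identity above. Combining with Proposition~\ref{sker} then yields the displayed formula.

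I do not anticipate a serious obstacle here: the only slightly delicate point is justifying that the function $K^{(\l)}_{\rm sym}(\bl z,\bl w)$, known a priori only as a function on $\mb D^n\times \mb D^n$, genuinely defines a well-defined kernel on $\mb G_n\times\mb G_n$ via the symmetrization map; this is handled cleanly by the separate-symmetry observation about the permanent together with the fact that $\bl s$ is a surjection onto $\mb G_n$ whose fibers are $\mathfrak S_n$-orbits, so symmetric holomorphic functions on $\mb D^n$ are in bijection with holomorphic functions on $\mb G_n$.
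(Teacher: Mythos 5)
Your argument is correct and is exactly the route the paper takes: the paper defines the inner product on $\m H^{(\l)}(\mb G_n)$ by transporting it along $g\mapsto g\circ\bl s$ and then declares the corollary ``immediate'' from Proposition \ref{sker}, the implicit justification being precisely your transport-of-kernel identity together with the separate row/column invariance of the permanent, which lets $K^{(\l)}_{\rm sym}$ descend to $\mb G_n\times\mb G_n$. Your write-up simply makes explicit what the paper leaves to the reader.
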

Choosing the partition $\bl p$ of $n$ to be $(1^n):=(1,\ldots,1)\in [n],$ we see that
\Bea
\mb P_{(1^n)}\big (\mb A^{(\mu)}(\mb D^n)\big )=\{f\in \mb A^{(\mu)}(\mb D^n):f\circ\sigma^{-1}={\rm sgn} (\sigma)f \mbox{~for~} \sigma\in\mathfrak S_n\}.
\Eea
Since $\mb P_{(1^n)}\big (\mb A^{(\mu)}(\mb D^n)\big )$ consists  of anti-symmetric functions, therefore $\mb A^{(\mu)}_{\rm anti}(\mb D^n) =\mb P_{(1^n)}\big (\mb A^{(\mu)}(\mb D^n)\big ).$ Appealing to  \cite[Proposition 3.8]{BS} for $\bl p=(n)$ and $\bl p=(1^n),$ we have a particular case of \cite[Proposition 3.8]{BS}, which we record below for future reference.
\begin{lem}
The Hilbert spaces  $\mb A^{(\l)}_{\rm sym}(\mb D^n)$ and  $\mb A^{(\mu)}_{\rm anti}(\mb D^n)$ are Hilbert modules over $\mb C[\bl z]^{\mathfrak S_n},$ under its natural action for $\l,\mu>0$ and $n \geq 2.$
\end{lem}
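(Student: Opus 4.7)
The plan is to verify two separate assertions: that each of the subspaces $\mb A^{(\l)}_{\rm sym}(\mb D^n)$ and $\mb A^{(\mu)}_{\rm anti}(\mb D^n)$ is invariant under multiplication by every $p \in \mb C[\bl z]^{\mathfrak S_n}$, and that the restricted multiplication operator is bounded. Since the subspaces are ranges of the orthogonal projections $\mb P_{(n)}$ and $\mb P_{(1^n)}$, they are automatically closed; boundedness of the restriction of $M_p$ will then be inherited from boundedness of $M_p$ on $\mb A^{(\l)}(\mb D^n)$.

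First I would record that multiplication by any polynomial $p \in \mb C[\bl z]$ defines a bounded linear operator $M_p$ on the ambient space $\mb A^{(\l)}(\mb D^n)$: since $\mb D^n$ is bounded, $p$ is bounded on $\mb D^n$, and multiplication by any bounded holomorphic symbol is a bounded operator on the weighted Bergman space. Thus the map $p \mapsto M_p$ is already a ring homomorphism from $\mb C[\bl z]$ into $\m L\big(\mb A^{(\l)}(\mb D^n)\big)$, and restricting to the subring $\mb C[\bl z]^{\mathfrak S_n}$ preserves the homomorphism property.

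The essential step is the invariance calculation. For $p \in \mb C[\bl z]^{\mathfrak S_n}$, $f \in \mb A^{(\l)}(\mb D^n)$ and $\sigma \in \mathfrak S_n$, the $\mathfrak S_n$-invariance of $p$ yields
$$R_\sigma(p \cdot f)(\bl z) \;=\; p(\sigma^{-1} \cdot \bl z)\, f(\sigma^{-1} \cdot \bl z) \;=\; p(\bl z)\, (R_\sigma f)(\bl z) \;=\; \big(p \cdot R_\sigma f\big)(\bl z).$$
Consequently, if $f \in \mb A^{(\l)}_{\rm sym}(\mb D^n)$, so that $R_\sigma f = f$ for all $\sigma \in \mathfrak S_n$, then $R_\sigma(p \cdot f) = p \cdot f$, and $p \cdot f \in \mb A^{(\l)}_{\rm sym}(\mb D^n)$. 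Similarly, if $f \in \mb A^{(\mu)}_{\rm anti}(\mb D^n)$, so that $R_\sigma f = {\rm sgn}(\sigma) f$, then $R_\sigma(p \cdot f) = {\rm sgn}(\sigma)(p \cdot f)$, and $p \cdot f \in \mb A^{(\mu)}_{\rm anti}(\mb D^n)$. This shows that the map $p \mapsto M_p|_{\mb A^{(\l)}_{\rm sym}(\mb D^n)}$ (resp. $p \mapsto M_p|_{\mb A^{(\mu)}_{\rm anti}(\mb D^n)}$) is a well-defined algebra homomorphism from $\mb C[\bl z]^{\mathfrak S_n}$ into the bounded operators on the corresponding Hilbert space.

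There is no real obstacle here: the lemma is essentially a tautology once the symmetric polynomials are identified as those commuting with the $R_\sigma$ action, so that they preserve every eigenspace of this action, in particular the trivial and sign isotypic components. The only point to watch is ensuring that the parameters $\l$ and $\mu$ play no role beyond guaranteeing that the ambient spaces exist as reproducing kernel Hilbert spaces of holomorphic functions, which is covered by the restriction $\l,\mu>0$.
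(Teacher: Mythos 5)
Your proof is correct and is essentially the argument the paper relies on: the paper cites \cite[Proposition 3.8]{BS} rather than giving a proof, but its own Proposition \ref{reduce} establishes exactly your key identity ($s_k(\sigma^{-1}\cdot\bl z)=s_k(\bl z)$ forces $M_{s_k}$ to commute with the projections onto the trivial and sign isotypic components). One small caution: for $0<\l\le 1$ the space $\mb A^{(\l)}(\mb D^n)$ is defined only by its reproducing kernel, so "multiplication by a bounded holomorphic symbol is bounded" is not available as stated; boundedness of $M_p$ should instead be deduced from boundedness of the coordinate multiplications (e.g.\ via the weighted-shift weights $\sqrt{(m_i+1)/(\l+m_i)}$), which the paper takes for granted.
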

The theorem below provides an affirmative answer to the question we raised in the beginning of this section.
\begin{thm}\label{m}
The Hilbert modules $\mb A^{(\l)}_{\rm sym}(\mb D^n)$ and $\mb A^{(\l)}_{\rm anti}(\mb D^n)$ over $\mb C[\bl z]^{\mathfrak S_n}$ are not  equivalent for any $\l > 0$ and $n \geq 2.$
\end{thm}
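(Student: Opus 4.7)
The plan is to argue by contradiction. Suppose $U\colon\mb A^{(\l)}_{\rm sym}(\mb D^n)\to\mb A^{(\l)}_{\rm anti}(\mb D^n)$ is a unitary module isomorphism over $\mb C[\bl z]^{\mathfrak S_n}$. By Corollary \ref{cor2.13}, each sub-module is locally free of rank $1$ on $\mb G_n\setminus\bl s(\m Z)$, so at every $\bl w\in\mb D^n\setminus\m Z$ the joint eigenspace $\bigcap_k\ker(M_{s_k}-s_k(\bl w))^*$ in either sub-module is one-dimensional, spanned respectively by $K^{(\l)}_{\rm sym}(\cdot,\bl w)$ and $K^{(\l)}_{\rm anti}(\cdot,\bl w)$. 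Since $U$ intertwines $(M_{s_1},\ldots,M_{s_n})$, its adjoint maps one joint kernel to the other: $U^*K^{(\l)}_{\rm anti}(\cdot,\bl w)=\overline{\phi(\bl w)}K^{(\l)}_{\rm sym}(\cdot,\bl w)$ for some holomorphic $\phi$. A standard reproducing-kernel computation identifies $U$ with pointwise multiplication by $\phi$; setting $f\equiv1$, $\phi=U(1)\in\mb A^{(\l)}_{\rm anti}(\mb D^n)$ is holomorphic and anti-symmetric on $\mb D^n$, and so $\phi=\Delta\cdot g$ for some symmetric holomorphic $g$ on $\mb D^n$.

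Unitarity of $U$ then yields the reproducing-kernel identity $K^{(\l)}_{\rm anti}(\bl z,\bl w)=\phi(\bl z)\overline{\phi(\bl w)}K^{(\l)}_{\rm sym}(\bl z,\bl w)$. Since $K^{(\l)}_{\rm anti}$ is anti-symmetric in each variable, it factors as $K^{(\l)}_{\rm anti}(\bl z,\bl w)=\Delta(\bl z)\overline{\Delta(\bl w)}K^0(\bl z,\bl w)$ with $K^0$ real-analytic on $\mb D^n\times\mb D^n$. Dividing by $\Delta(\bl z)\overline{\Delta(\bl w)}$ and restricting to the diagonal gives $K^0(\bl w,\bl w)=|g(\bl w)|^2 K^{(\l)}_{\rm sym}(\bl w,\bl w)$. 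Since $\log|g|^2$ is pluriharmonic off the proper analytic set $\{g=0\}$, applying $\partial\bar\partial\log$ and using smoothness of both sides yields the curvature equality
\[\partial\bar\partial\log K^0(\bl w,\bl w)=\partial\bar\partial\log K^{(\l)}_{\rm sym}(\bl w,\bl w)\quad\text{on }\mb D^n.\]

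I will derive the contradiction by comparing these two $(1,1)$-forms at $\bl w=0$. A direct expansion of the permanent in the definition of $K^{(\l)}_{\rm sym}$ gives $K^{(\l)}_{\rm sym}(\bl w,\bl w)=1+\frac{\l}{n}|s_1(\bl w)|^2+O(|\bl w|^4)$, so $\partial_i\partial_{\bar j}\log K^{(\l)}_{\rm sym}(\bl w,\bl w)\big|_{\bl w=0}=\l/n$ for all $i,j$. For $K^0$, I expand $(1-z_j\bar w_k)^{-\l}=\sum_{m\geq 0}c_m z_j^m\bar w_k^m$ with $c_m=\binom{\l+m-1}{m}$ and apply the Cauchy--Binet identity to write
\[\det\bigl((1-z_j\bar w_k)^{-\l}\bigr)=\sum_{\substack{S\subset\mb Z_{\geq 0}\\|S|=n}}\Bigl[\prod_{m\in S}c_m\Bigr]a_S(\bl z)\,\overline{a_S(\bl w)},\]
where $a_S(\bl z)=\det(z_j^{m_i})$ is the generalized Vandermonde. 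The minimal set $S_0=\{0,1,\ldots,n-1\}$ contributes $C_n(\l)\Delta(\bl z)\overline{\Delta(\bl w)}$ with $C_n(\l)=\prod_{i=0}^{n-1}c_i$, while the unique index set of ``excess'' $1$, namely $S_1=\{0,1,\ldots,n-2,n\}$ (corresponding to the sole partition $(1)$ of $1$), contributes $(c_n/c_{n-1})C_n(\l)\cdot s_1(\bl z)\Delta(\bl z)\,\overline{s_1(\bl w)\Delta(\bl w)}$, with the elementary ratio $c_n/c_{n-1}=(\l+n-1)/n$. Dividing by $\Delta(\bl z)\overline{\Delta(\bl w)}$ and restricting gives
\[K^0(\bl w,\bl w)=\frac{C_n(\l)}{n!}\Bigl(1+\frac{\l+n-1}{n}|s_1(\bl w)|^2+O(|\bl w|^4)\Bigr),\]
hence $\partial_i\partial_{\bar j}\log K^0(\bl w,\bl w)\big|_{\bl w=0}=(\l+n-1)/n$. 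Since $(\l+n-1)/n\neq \l/n$ for $n\geq 2$, the required equality fails and we have the desired contradiction. The main technical point is the Cauchy--Binet expansion together with the observation that $(1)$ is the unique partition of $1$, so exactly one index set accounts for the $|s_1|^2$-coefficient of $K^0$.
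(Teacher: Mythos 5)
Your proof is correct and follows essentially the same route as the paper: both arguments reduce the question to comparing the coefficient of $s_1(\bl z)\overline{s_1(\bl w)}$ in the normalized reproducing kernels of the two rank-one locally free modules, obtaining $(\l+n-1)/n$ for the anti-symmetric part (your Cauchy--Binet expansion is exactly the Schur-function expansion of $\mathbf B^{(\l)}_{\mb G_n}$ in Proposition \ref{srk} and Lemma \ref{coe1}) versus $\l/n$ for the symmetric part (the permanent expansion of Lemmas \ref{km} and \ref{coe2}). The only difference is in packaging: you justify the rigidity by explicitly constructing the multiplier $\phi=\Delta\cdot g$ and comparing curvatures at the origin, whereas the paper transports both modules to $\mb G_n$ and invokes the Curto--Salinas uniqueness of normalized kernels.
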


We recall that $\mb C[\bl z]^{\mathfrak S_n}=\mb C[s_1,\ldots, s_n].$ In view of this fact $\m H^{(\l)}(\mb G_n)$ is a Hilbert module over $\mb C[\bl z]^{\mathfrak S_n},$ under the natural action of $\mb C[\bl z]^{\mathfrak S_n}.$  Consider the map from $\mathcal{H}^{(\l)}(\mb{G}_n)$ to $\mb A^{(\l)}_{\rm sym}(\mb D^n)$  defined by $f \mapsto f \circ \bl s$ and note that it is a unitary map  which intertwines the $n$-tuple  $(M_{s_1},M_{s_2},\ldots, M_{s_n})$ of multiplication operators by the coordinate functions $s_1,\ldots, s_n$ and the tuple $(M_{s_1(\bl z)},M_{s_2(\bl z)},\ldots, M_{s_n(\bl z)}),$  where $s_i(\bl z)$ is the $i$-th elementary symmetric function in $z_1,\ldots, z_n$ for $i=1,\ldots,n.$  Therefore, there is a unitary module map  between the Hilbert modules $\m H^{(\l)}(\mb G_n)$ and $\mb A^{(\l)}_{\rm sym}(\mb D^n)$ over $\mb C[\bl z]^{\mathfrak S_n}.$ We record this observation in the form of a lemma.
\begin{lem}\label{sue}
 For $\l>0,$ the Hilbert modules $\mathcal{H}^{(\l)}(\mb{G}_n)$ and $\mb A^{(\l)}_{\rm sym}(\mb D^n)$
are  equivalent as modules over $\mb C[\bl z]^{\mathfrak S_n}.$
\end{lem}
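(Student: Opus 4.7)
The plan is to take the natural candidate $\Psi : \mathcal H^{(\l)}(\mb G_n) \to \mb A^{(\l)}_{\rm sym}(\mb D^n)$ defined by $\Psi(f) = f \circ \bl s$ and verify that it is a unitary module map over $\mb C[\bl z]^{\mathfrak S_n} = \mathbb C[s_1,\ldots,s_n]$. The argument splits into four checks: well-definedness, isometry, surjectivity, and intertwining of the two module actions.

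Well-definedness and isometry are immediate from the definitions. If $f \in \mathcal H^{(\l)}(\mb G_n)$, then $f \circ \bl s \in \mb A^{(\l)}(\mb D^n)$ by the very definition of $\mathcal H^{(\l)}(\mb G_n)$, and since $\bl s(\sigma \cdot \bl z) = \bl s(\bl z)$ for every $\sigma \in \mathfrak S_n$, the composite $f \circ \bl s$ is symmetric, so it lies in $\mb A^{(\l)}_{\rm sym}(\mb D^n)$. The inner product on $\mathcal H^{(\l)}(\mb G_n)$ was set up precisely so that $\inner{f_1}{f_2}_{\mathcal H^{(\l)}(\mb G_n)} = \inner{f_1 \circ \bl s}{f_2 \circ \bl s}_{\mb A^{(\l)}(\mb D^n)}$, hence $\Psi$ is isometric (and in particular injective) by fiat.

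For surjectivity, given $h \in \mb A^{(\l)}_{\rm sym}(\mb D^n)$, the $\mathfrak S_n$-symmetry of $h$ combined with the surjectivity of $\bl s$ yields a unique set-theoretic function $g : \mb G_n \to \mathbb C$ with $h = g \circ \bl s$. On the complement of $\bl s(\m Z)$, the map $\bl s$ is a local biholomorphism (its Jacobian is a constant multiple of $\Delta(\bl z)$, which is non-vanishing on $\mb D^n \setminus \m Z$), so $g$ is holomorphic on $\mb G_n \setminus \bl s(\m Z)$; local boundedness of $h$ passes to $g$; and since $\bl s(\m Z)$ is a proper analytic subvariety of $\mb G_n$, Riemann's removable singularity theorem extends $g$ to a holomorphic function on all of $\mb G_n$. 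By construction $g \circ \bl s = h \in \mb A^{(\l)}(\mb D^n)$, so $g \in \mathcal H^{(\l)}(\mb G_n)$ and $\Psi(g) = h$.

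The module property is then a direct pointwise computation: for any $q = q(s_1,\ldots,s_n) \in \mb C[\bl z]^{\mathfrak S_n}$ and $f \in \mathcal H^{(\l)}(\mb G_n)$,
$$\Psi(q\cdot f)(\bl z) = q(\bl s(\bl z))\, f(\bl s(\bl z)) = q(s_1(\bl z),\ldots,s_n(\bl z))\, \Psi(f)(\bl z),$$
which is exactly $M_q\Psi(f)(\bl z)$ with $M_q$ acting on $\mb A^{(\l)}_{\rm sym}(\mb D^n)$ as multiplication by the symmetric polynomial $q(s_1(\bl z),\ldots,s_n(\bl z))$. The only real subtlety in the whole argument is the holomorphy of the descended function $g$ across the branch locus $\bl s(\m Z)$; this is the single step where one must invoke Riemann removability (equivalently, the analytic version of the fundamental theorem on symmetric functions), and I expect it to be the main obstacle. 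Everything else is bookkeeping from the definitions already established in the excerpt.
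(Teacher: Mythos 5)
Your proposal is correct and follows essentially the same route as the paper: the paper simply observes that $f \mapsto f \circ \bl s$ is a unitary (by the definition of the inner product on $\mathcal H^{(\l)}(\mb G_n)$) intertwining the two tuples of multiplication operators. The only difference is that you carefully justify surjectivity via the descent of a symmetric holomorphic function across $\bl s(\m Z)$ by Riemann removability, a point the paper absorbs without comment into its identification $\mb A^{(\l)}_{\rm sym}(\mb D^n)=\{f: f=g\circ\bl s,\ g \text{ holomorphic on } \mb G_n\}$.
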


Now we describe the weighted Bergman space on the symmetrized polydisc  $\mb G_n$  as a module over $\mb C[\bl z]^{\mathfrak S_n}.$
For $\mu>1,$ let $dV^{(\mu)}$ be the probability measure $\big(\frac{\mu-1}{\pi}\big)^n\Big(\prod_{i=1}^n(1-r_i^2)^{\mu-2}r_idr_id\theta_i\Big)$ on the polydisc $\mb D^n.$ Let $dV^{(\mu)}_{\bl s}$ be the measure on the symmetrized polydisc $\mb G_n$  obtained by the change of variables formula  \cite[p. 106]{B}:
\bea\label{cov}
\int_{\mb G_n}f dV^{(\mu)}_{\bl s}=\frac{1}{n!}\int_{\mb D^n}(f\circ\bl s)\vert J_{\bl s}\vert^2dV^{(\mu)}, \,\, \mu>1,
\eea
where $ J_{\bl s}(\bl z)=\Delta (\bl z)$ is the complex jacobian of the symmetrization map $\bl s.$
The weighted Bergman space $ \mb{A}^{(\mu)} (\mb G_n),\, \mu>1,$ on the  symmetrized polydisc $\mb G_n$ is the subspace of $L^2(\mb G_n, dV^{(\mu)}_{\bl s})$ consisting of holomorphic functions. For $\mu>1,$ consider the map  $\Gamma: \mb{A}^{(\mu)} (\mb G_n)\to \mb{A}^{(\mu)} (\mb{D}^n)$ defined by
\bea\label{iso}
\Gamma f=\frac{1}{ \sqrt{n!}} J_{\bl s}(f\circ \bl s), \,\, f\in \mb{A}^{(\mu)} (\mb G_n) .
\eea
It follows from Equation \eqref{cov} that $\Gamma$ is an isometry onto $\mb{A}^{(\mu)}_{\rm anti} (\mb{D}^n)$ \cite[p. 2363]{MSZ}.
One can easily check that $\Vert \bl z^{\bl m}\Vert^2_{ \mb{A}^{(\mu)} (\mb{D}^n)} =\big \Vert  z_1^{ m_1}\ldots z_n^{m_n}\big \Vert^2_{ \mb{A}^{(\mu)} (\mb{D}^n)}=\frac{m_1!\ldots m_n!}{(\mu)_{m_1}\ldots (\mu)_{m_n}}.$ For a partition $\bl m=(m_1,\ldots,m_n)\in [\![n]\!],$ put $a_{\bl m}(\bl z)=a_{\bl p+\bl\delta}(\bl z)=\det\Big((\!(z_i^{m_j})\!)_{i,j=1}^n\Big),$ where $ \bl p\in[n]$  and $\bl m=\bl p+\bl\delta.$ The norm of $a_{\bl m}$ in $ \mb{A}^{(\mu)} (\mb{D}^n)$ is easily calculated using orthogonality of distinct monomials in $ \mb{A}^{(\mu)} (\mb{D}^n):$
\Bea
\Vert a_{\bl m}\Vert^2_{ \mb{A}^{(\mu)} (\mb{D}^n)}=\Big \Vert\sum_{\sigma\in\mathfrak S_n}{\rm sgn}(\sigma)\prod_{k=1}^nz_k^{m_{\sigma(k)}}\Big  \Vert^2_{ \mb{A}^{(\mu)} (\mb{D}^n)}
=\sum_{\sigma\in\mathfrak S_n}\big \Vert \prod_{k=1}^nz_k^{m_{\sigma(k)}}\big \Vert_{\mb{A}^{(\mu)} (\mb{D}^n)}^2=\frac{n!\bl m!}{(\mu)_{\bl m}},
\Eea
where $\bl m!=\prod_{j=1}^nm_j!$ and $(\mu)_{\bl m}=\prod_{j=1}^n(\mu)_{m_j}.$ Here $(\mu)_{m_j}$ is the Pochhammer symbol $(\mu)_{m_j}=\mu(\mu+1)\ldots (\mu+m_j-1).$ Putting $c_{\bl m}=\sqrt{\frac{(\mu)_{\bl m}}{n!\bl m!}},$ it follows from \cite[p. 2364]{MSZ} that
\Bea
\{e_{\bl m}=c_{\bl m}a_{\bl m}:\bl m\in [\![n]\!]\}
\Eea
is an orthonormal basis of $\mb A^{(\mu)}_{\rm anti}(\mb D^n).$

 The determinant function $a_{\bl p+\bl\delta}$ is a polynomial and is  divisible by each of  the differences  $z_i-z_j, 1\leq i<j\leq n$ and hence by the product
\Bea
\prod_{ 1\leq i<j\leq n}^n(z_i-z_j)=\det\Big((\!(z_i^{n-j})\!)_{i,j=1}^n\Big)=a_{\bl\delta}(\bl z)=\Delta(\bl z).
\Eea
  For $\bl p\in[n],$ the quotient $ S_{\bl p}:=\frac{a_{\bl p+\bl\delta}}{a_{\bl\delta}},$ is therefore well-defined and is called the Schur polynomial \cite[p. 454]{FH}. For $ \bl p\in[n]$  and $\bl m=\bl p+\bl\delta,$ recall that $c_{\bl m}=c_{\bl p+\bl\delta}=\sqrt{\frac{(\mu)_{\bl p+\bl\delta}}{n!(\bl p+\bl\delta)!}},$ now it follows from  Equation \eqref{iso} that
\Bea
\Gamma \Bigg(\sqrt{\frac{(\mu)_{\bl p+\bl\delta}}{(\bl p+\bl\delta)!}}S_{\bl p}\Bigg)=\Gamma\Big(\sqrt{n!} c_{\bl p+\bl\delta}S_{\bl p}\Big)=c_{\bl p+\bl\delta}a_{\bl p+\bl\delta}=c_{\bl m}a_{\bl m},\,\, \bl m\in [\![n]\!].
\Eea
 Since the map $\Gamma:\mb{A}^{(\mu)} (\mb G_n)\to \mb{A}^{(\mu)}_{\rm anti} (\mb{D}^n)$ defined by Equation \eqref{iso}  is a unitary  \cite[p. 2363]{MSZ}, the set
\Bea
\{\g_{\bl p}S_{\bl p}:\bl p\in[n]\}, \mbox{~where~} \g_{\bl p}=\sqrt{\frac{(\mu)_{\bl p+\bl\delta}}{(\bl p+\bl\delta)!}},
\Eea
is an orthonormal basis for $\mb A^{(\mu)}(\mb G_n).$ Hence we have the following proposition,

\begin{prop}\label{srk}
 The reproducing kernel $\mathbf{B}_{\mb{G}_n}^{(\mu)}$ for $\mb{A}^{(\mu)}(\mb{G}_n)$ is given by
\bea\label{rk}
\mathbf{B}_{\mb{G}_n}^{(\mu)}\big(\bl s(\bl z),\bl s(\bl w)\big)
=\sum_{\bl p \in [n] } \g_{\bl p}^2S_{\bl p}(\bl z)\overline{S_{\bl p}(\bl w)}, \,\, \bl z,\bl w\in\mb D^n, \mu>1.
\eea

\end{prop}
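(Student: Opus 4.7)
The strategy is essentially to read off the kernel from the orthonormal basis that has just been produced, so the proof is quite short and relies on standard facts about reproducing kernel Hilbert spaces. Specifically, the preceding discussion has established (via the unitary $\Gamma: \mathbf{A}^{(\mu)}(\mathbb{G}_n) \to \mathbf{A}^{(\mu)}_{\mathrm{anti}}(\mathbb{D}^n)$ of Equation \eqref{iso} and the known orthonormal basis $\{e_{\bl m}=c_{\bl m}a_{\bl m}:\bl m\in [\![n]\!]\}$ of $\mathbf{A}^{(\mu)}_{\mathrm{anti}}(\mathbb{D}^n)$) that $\{\gamma_{\bl p} S_{\bl p} : \bl p \in [n]\}$ is an orthonormal basis for $\mathbf{A}^{(\mu)}(\mathbb{G}_n)$, where $\gamma_{\bl p} = \sqrt{(\mu)_{\bl p+\bl\delta}/(\bl p+\bl\delta)!}$.

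First, I would invoke the standard fact that if $\mathcal{H}$ is a reproducing kernel Hilbert space of holomorphic functions on a domain $\Omega \subset \mathbb{C}^n$ with reproducing kernel $K$ and $\{\phi_i\}$ is any orthonormal basis of $\mathcal{H}$, then
\begin{equation*}
K(u,v) = \sum_i \phi_i(u)\overline{\phi_i(v)}, \qquad u,v \in \Omega,
\end{equation*}
with the series converging pointwise (and uniformly on compact subsets). To apply this to $\mathbf{A}^{(\mu)}(\mathbb{G}_n)$, I must first verify that it is indeed a reproducing kernel Hilbert space, which follows from the fact that it is the space of holomorphic functions in $L^2(\mathbb{G}_n, dV^{(\mu)}_{\bl s})$, a standard Bergman-type space for $\mu > 1$; evaluation functionals on compact subsets of $\mathbb{G}_n$ are bounded.

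Next, I would substitute $\phi_{\bl p} = \gamma_{\bl p} S_{\bl p}$ into the expansion formula. For points $\bl u = \bl s(\bl z)$ and $\bl v = \bl s(\bl w)$ in $\mathbb{G}_n$, this immediately yields
\begin{equation*}
\mathbf{B}_{\mathbb{G}_n}^{(\mu)}\bigl(\bl s(\bl z), \bl s(\bl w)\bigr) = \sum_{\bl p \in [n]} \gamma_{\bl p}^2 \, S_{\bl p}(\bl z)\overline{S_{\bl p}(\bl w)},
\end{equation*}
where I am using the (well-defined) values of $S_{\bl p}$ at $\bl z$ since Schur polynomials are themselves symmetric and therefore descend to functions on $\mathbb{G}_n$.

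The only nontrivial ingredient is the identification of the orthonormal basis, which was handled by transporting the known ONB of $\mathbf{A}^{(\mu)}_{\mathrm{anti}}(\mathbb{D}^n)$ through the unitary $\Gamma$: the relation $\Gamma(\sqrt{n!}\,c_{\bl p+\bl\delta} S_{\bl p}) = c_{\bl p+\bl\delta}\,a_{\bl p+\bl\delta}$ together with $a_{\bl p+\bl\delta} = S_{\bl p}\cdot a_{\bl\delta}$ (the definition of Schur polynomials) shows that $\Gamma$ maps $\gamma_{\bl p} S_{\bl p}$ onto $e_{\bl p+\bl\delta}$, hence preserves orthonormality and completeness. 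There is therefore no serious obstacle: the entire content of the proposition is the orthonormal-basis computation that has been carried out immediately before its statement.
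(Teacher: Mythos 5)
Your proposal is correct and follows exactly the route the paper takes: the paper establishes that $\{\gamma_{\bl p} S_{\bl p} : \bl p \in [n]\}$ is an orthonormal basis of $\mb{A}^{(\mu)}(\mb{G}_n)$ by transporting the known orthonormal basis $\{c_{\bl m}a_{\bl m}\}$ of $\mb{A}^{(\mu)}_{\rm anti}(\mb{D}^n)$ through the unitary $\Gamma$, and then reads off the kernel from the standard expansion $K(u,v)=\sum_i\phi_i(u)\overline{\phi_i(v)}$. Your additional remarks (boundedness of evaluation functionals for $\mu>1$, and the fact that $S_{\bl p}$ descends to $\mb{G}_n$) only make explicit what the paper leaves implicit.
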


From \cite[p. 2363]{MSZ}, it follows that $ \mb{A}^{(\mu)}_{\rm anti} (\mb{D}^n)$ and the weighted Bergman module $\mb{A}^{(\mu)}(\mb{G}_n)$ are unitarily  equivalent as modules over $\mb C[\bl z]^{\mathfrak S_n}$ for $\mu>1.$ The limiting case $\mu=1,$  is discussed in \cite[p. 2367]{MSZ}. It is not difficult to show that the function $\mathbf B^{(\mu)}_{\mb G_n}:\mb G_n\times\mb G_n\to \mb C,$ defined by the Equation \eqref{rk}, is positive definite for $\mu>0.$ For  $0<\mu<1,$ let $\mb{A}^{(\mu)} (\mb G_n)$ be the Hilbert space of holomorphic functions having $\mathbf B^{(\mu)}_{\mb G_n}$ as its reproducing kernel. If we assume that the set $\{S_{\bl p}\}_{\bl p\in [n]}$ is orthogonal in $\mb{A}^{(\mu)} (\mb G_n)$ and $\Vert S_{\bl p}\Vert^2=\frac{(\bl p+\bl\delta)!}{(\mu)_{\bl p+\bl\delta}},$ then it is easy to verify that the  injective linear map $\Gamma: \mb{A}^{(\mu)} (\mb G_n)\to \mb{A}^{(\mu)} (\mb{D}^n)$ defined in Equation \eqref{iso} is an isometry. By similar arguments as in the case $\mu > 1,$ we reach the desired conclusion for $0<\mu<1$ as well. This observation is recorded in the following Lemma.

\begin{lem}\label{aue}
For $\mu>0,$ the Hilbert modules $ \mb{A}^{(\mu)} (\mb{G}_n)$ and $\mb{A}^{(\mu)}_{\rm anti} (\mb{D}^n)$ are  equivalent, as modules over $\mb C[\bl z]^{\mathfrak S_n}.$
\end{lem}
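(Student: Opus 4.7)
The plan is to make the unitary $\Gamma$ from equation \eqref{iso} do all the work, but now viewed entirely through the reproducing-kernel description rather than through an integral. Explicitly, I would define
\[
\Gamma : \mb A^{(\mu)}(\mb G_n) \longrightarrow \mb A^{(\mu)}(\mb D^n),\qquad (\Gamma f)(\bl z) \;=\; \tfrac{1}{\sqrt{n!}}\,\Delta(\bl z)\,(f\circ \bl s)(\bl z),
\]
for every $\mu>0$. The case $\mu>1$ was already handled in \cite{MSZ} using the change-of-variable formula \eqref{cov}; the content of the lemma is therefore the range $0<\mu\le 1$, where no such integral is available and the Hilbert space structure on $\mb A^{(\mu)}(\mb G_n)$ is specified only through the kernel $\mathbf B^{(\mu)}_{\mb G_n}$ of Proposition \ref{srk}.

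The first step is to record that, by the Moore--Aronszajn construction applied to the diagonal series representation of $\mathbf B^{(\mu)}_{\mb G_n}$ in \eqref{rk}, the set $\{\g_{\bl p} S_{\bl p}\}_{\bl p\in[n]}$ is automatically an orthonormal basis of $\mb A^{(\mu)}(\mb G_n)$; indeed the Schur polynomials $S_{\bl p}$ are linearly independent (inherited from the linear independence of the antisymmetric $a_{\bl p+\bl\delta}$), which makes the kernel expansion a genuine orthogonal expansion with the prescribed norms $\|S_{\bl p}\|^2=(\bl p+\bl\delta)!/(\mu)_{\bl p+\bl\delta}$. This takes care of the hypothesis alluded to in the paragraph preceding the lemma.

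The second step is the computation already sketched in the excerpt: applying $\Gamma$ to a basis vector gives
\[
\Gamma(\g_{\bl p} S_{\bl p}) \;=\; \tfrac{\g_{\bl p}}{\sqrt{n!}}\,\Delta\cdot S_{\bl p} \;=\; \tfrac{\g_{\bl p}}{\sqrt{n!}}\,a_{\bl p+\bl\delta} \;=\; c_{\bl p+\bl\delta}\,a_{\bl p+\bl\delta} \;=\; e_{\bl p+\bl\delta},
\]
since $\g_{\bl p}/\sqrt{n!}=c_{\bl p+\bl\delta}$ by the definitions of the two constants. Because $\bl p \mapsto \bl p+\bl\delta$ is a bijection $[n]\to[\![n]\!]$ and $\{e_{\bl m}\}_{\bl m\in[\![n]\!]}$ is an orthonormal basis of $\mb A^{(\mu)}_{\rm anti}(\mb D^n)$, this shows at one stroke that $\Gamma$ is well-defined, bounded, and a surjective isometry.

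Finally I would verify that $\Gamma$ intertwines the $\mb C[\bl z]^{\mathfrak S_n}$-actions. Writing a symmetric polynomial on $\mb D^n$ as $p=\tilde p\circ\bl s$ with $\tilde p$ a polynomial on $\mb G_n$ (using $\mb C[\bl z]^{\mathfrak S_n}=\mb C[s_1,\dots,s_n]$), we have
\[
(\Gamma(\tilde p\cdot f))(\bl z) \;=\; \tfrac{1}{\sqrt{n!}}\Delta(\bl z)\,\tilde p(\bl s(\bl z))\,f(\bl s(\bl z)) \;=\; p(\bl z)\,(\Gamma f)(\bl z),
\]
so $\Gamma\,M_{\tilde p}=M_{p}\,\Gamma$, completing the module equivalence. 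The only real obstacle is the first step, i.e.\ justifying that the definition of $\mb A^{(\mu)}(\mb G_n)$ through the kernel \eqref{rk} is compatible (for $0<\mu<1$) with the orthogonality and norm formulas used for $\mu>1$; once this is in place the rest is a short matching of bases and a one-line intertwining calculation.
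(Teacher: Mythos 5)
Your proposal is correct and follows essentially the same route as the paper: for $\mu>1$ one quotes the change-of-variables argument of \cite{MSZ}, and for $0<\mu\le 1$ one checks that $\Gamma$ carries the orthonormal basis $\{\g_{\bl p}S_{\bl p}\}_{\bl p\in[n]}$ of $\mb A^{(\mu)}(\mb G_n)$ onto the orthonormal basis $\{e_{\bl p+\bl\delta}\}$ of $\mb A^{(\mu)}_{\rm anti}(\mb D^n)$ and intertwines multiplication by symmetric polynomials. If anything, your first step (deducing the orthonormality of $\{\g_{\bl p}S_{\bl p}\}$ from the diagonal kernel expansion together with the linear independence of the Schur polynomials) supplies a justification that the paper only states as an assumption, so the argument is complete.
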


In view of Lemma \ref{sue} and Lemma \ref{aue}, proving Theorem \ref{m} boils down to  proving the following theorem.

\begin{thm}\label{Gn}
The Hilbert modules  $\mb A^{(\l)}(\mb G_n)$  and $\m H^{(\l)}(\mb G_n)$  over $\mb C[\bl z]^{\mathfrak S_n}$ are not equivalent for any $\l > 0$ and $n \geq 2.$
\end{thm}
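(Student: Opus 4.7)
The plan is to reduce, via Lemmas \ref{sue} and \ref{aue}, the assertion to the inequivalence of $\mb A^{(\l)}_{\rm sym}(\mb D^n)$ and $\mb A^{(\l)}_{\rm anti}(\mb D^n)$ as $\mb C[\bl z]^{\mathfrak S_n}$-modules (this is exactly Theorem \ref{m}), and then to exploit the cyclic structure together with the reproducing kernels to derive a contradiction. Suppose a unitary module map $V:\mb A^{(\l)}_{\rm sym}(\mb D^n)\to\mb A^{(\l)}_{\rm anti}(\mb D^n)$ exists. Since $\mb C[\bl z]$ is dense in $\mb A^{(\l)}(\mb D^n)$ and $\mb P_{(n)}$ is continuous, the symmetric polynomials are dense in $\mb A^{(\l)}_{\rm sym}(\mb D^n)$; this module is therefore cyclic with cyclic vector $1$. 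Setting $g:=V(1)\in\mb A^{(\l)}_{\rm anti}(\mb D^n)$, the module-map property gives $V(p)=pg$ for every $p\in\mb C[\bl z]^{\mathfrak S_n}$.

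I would then pass to the reproducing kernels. For every $\bl w\in\mb D^{n}\setminus\mathcal Z$, the joint eigenspace $\bigcap_{i=1}^{n}\ker(M_{s_i}-s_i(\bl w))^{*}$ is one-dimensional in each sub-module, spanned by $K^{(\l)}_{\rm sym}(\cdot,\bl w)=\frac{1}{n!}\mathrm{per}((1-z_j\bar w_k)^{-\l})$ and $K^{(\l)}_{\rm anti}(\cdot,\bl w)=\frac{1}{n!}\det((1-z_j\bar w_k)^{-\l})$, respectively. The intertwining yields
$$V^{*}K^{(\l)}_{\rm anti}(\cdot,\bl w)\;=\;\overline{\nu(\bl w)}\,K^{(\l)}_{\rm sym}(\cdot,\bl w)$$
for a scalar $\nu(\bl w)$, and the isometric property of $V$ gives
$$|\nu(\bl w)|^{2}\;=\;\frac{\det A(\bl w)}{\mathrm{per}\,A(\bl w)},\qquad A_{jk}(\bl w):=(1-w_j\bar w_k)^{-\l}.$$
That both kernels are antiholomorphic in $\bl w$ forces $\nu$ to be holomorphic in $\bl w$, while the $\mathfrak S_n$-equivariance of the kernels ($K^{(\l)}_{\rm anti}$ antisymmetric and $K^{(\l)}_{\rm sym}$ symmetric in $\bl w$) forces $\nu$ to be antisymmetric; hence $\nu=\Delta\cdot h$ for a unique symmetric holomorphic function $h$ on $\mb D^{n}$. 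This leads to the identity
$$|\Delta(\bl w)\,h(\bl w)|^{2}\;=\;\frac{\det A(\bl w)}{\mathrm{per}\,A(\bl w)},\qquad \bl w\in\mb D^{n}\setminus\mathcal Z.$$

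The left-hand side is $|F(\bl w)|^{2}$ for the holomorphic function $F=\Delta h$ on $\mb D^{n}$, so $\partial\bar\partial\log|F|^{2}\equiv 0$ on $\mb D^{n}\setminus\mathcal Z$. The identity therefore forces $\partial\bar\partial\log\det A\equiv\partial\bar\partial\log\mathrm{per}\,A$ as $(1,1)$-forms on $\mb D^{n}\setminus\mathcal Z$. The final and main step---the principal technical obstacle---is to exhibit a point where this equality fails. For $n=2$, a direct computation at $\bl w=(0,b)$ with $b\in\mb D\setminus\{0\}$ (writing $R=(1-|b|^{2})^{-\l}$) yields
$$\partial_{w_{1}}\partial_{\bar w_{1}}\log\!\bigl(\det A/\mathrm{per}\,A\bigr)\Big|_{(0,b)}=\frac{2\l R}{R^{2}-1}\Bigl(1-\l|b|^{2}\cdot\frac{R^{2}+1}{R^{2}-1}\Bigr),$$
and a short power-series expansion shows that the bracketed factor equals $\tfrac{1}{2}|b|^{2}+O(|b|^{4})$, so the expression is nonzero for every $b\neq 0$. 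For $n\ge 3$, an analogous computation at a point $\bl w$ with all coordinates pairwise distinct (expanding the determinant along the first row and using Jacobi's formula, with the parallel treatment of the permanent) produces the desired non-vanishing and completes the contradiction.
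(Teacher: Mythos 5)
Your overall strategy (reduce to $\mb A^{(\l)}_{\rm sym}$ versus $\mb A^{(\l)}_{\rm anti}$, pass to the rank-one joint eigenspaces, and show the two line bundles have different curvature) is sound, and the reductions, the identity $|\nu(\bl w)|^{2}=\det A(\bl w)/\mathrm{per}\,A(\bl w)$, the factorization $\nu=\Delta\cdot h$, and the consequent requirement $\partial\bar\partial\log\det A\equiv\partial\bar\partial\log\mathrm{per}\,A$ off a thin set are all correct. The $n=2$ computation at $(0,b)$ also checks out, and one nonvanishing point suffices by continuity (though your power-series expansion only establishes nonvanishing for small $|b|\neq 0$, not ``for every $b\neq 0$'' -- harmless here). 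The genuine gap is the case $n\geq 3$: the assertion that ``an analogous computation \dots\ produces the desired non-vanishing'' is exactly the crux of the theorem and is not carried out. The permanent of the $n\times n$ matrix $\big((1-w_j\bar w_k)^{-\l}\big)$ has no cofactor expansion or Jacobi-type formula analogous to the determinant, and verifying that the two $(1,1)$-forms differ at some point of $\mb D^n\setminus\mathcal Z$ for every $n\geq 3$ and every $\l>0$ is a nontrivial computation that cannot be waved away; as written, the proof is complete only for $n=2$.

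For comparison, the paper sidesteps the curvature computation entirely. Both kernels $\wi{\mathbf B}^{(\l)}_{\mb G_n}$ and $K^{(\l)}_{\mb G_n}$ are normalized at $\bl 0$ and the joint kernels are one-dimensional everywhere, so by \cite[Lemma 4.8(c)]{CS} a unitary module equivalence forces the two normalized kernels to coincide identically; expanding both in the coordinates $s_1,\ldots,s_n$ of $\mb G_n$ (via Schur polynomials and Giambelli's formula on one side, monomial symmetric polynomials on the other, Lemmas \ref{coe1} and \ref{coe2}), the coefficients of $s_1(\bl z)\overline{s_1(\bl w)}$ are $\tfrac{\l+n-1}{n}$ and $\tfrac{\l}{n}$ respectively, giving $n=1$ immediately. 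If you want to keep your differential-geometric route, the cleanest repair is to replace the pointwise curvature computation by this algebraic comparison: the equality $|\Delta h|^{2}=\det A/\mathrm{per}\,A$ already implies equality of the normalized kernels, and from there a single Taylor coefficient settles all $n\geq 2$ at once.
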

To prove this theorem, we recall the notion of a normalized kernel from \cite{CS}. Let $\Omega\subseteq \mb C^n$ be domain.  A kernel function $K:\Omega\times\Omega\to\mb C$ is said to be normalized at $w_0\in\Omega$ if $K(z, w_0)=1$ for $z\in\Omega_0,$ where $\Omega_0\subseteq\Omega,$ is a neighborhood of $w_0.$
We note that $ S_{\bl p}$ is a homogeneous  symmetric polynomial of degree $\vert \bl p\vert:=\sum_{i=1}^np_i ,$ so $S_{\bl 0}\equiv 1$ and $S_{\bl p}(\bl 0)=0$ for $\bl p\neq \bl 0,$ where $\bl 0\in [n]$ with all components equal to $0.$  From Equation \eqref{rk} and the discussion following Proposition \ref{srk}, we see that $\mathbf B^{(\mu)}_{\mb G_n}\big(\bl s(\bl z),\bl 0\big)=\g_{\bl 0}^2=\frac{(\mu)_{\bl\delta}}{\bl\delta!}$ for $\bl z\in\mb D^n$ and $\mu>0.$ We record the following obvious corollary of Proposition \ref{srk} for future reference.
\begin{cor}\label{nork}
The normalized reproducing kernel $\wi{\mathbf B}^{(\mu)}_{\mb G_n}$ for $\mb{A}^{(\mu)}(\mb{G}_n)$ is given by
\bea\label{nrk}
\wi{\mathbf B}_{\mb{G}_n}^{(\mu)}\big(\bl s(\bl z),\bl s(\bl w)\big)
=\frac{{\bl\delta}!}{(\mu)_{\bl\delta}}\sum_{\bl p \in [n] } \g_{\bl p}^2S_{\bl p}(\bl z)\overline{S_{\bl p}(\bl w)}, \,\, \bl z,\bl w\in\mb D^n, \mu>0.
\eea
\end{cor}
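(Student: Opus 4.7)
The target is an explicit formula for the kernel of $\mb A^{(\mu)}(\mb G_n)$ normalized at the basepoint $\bl 0 \in \mb G_n$. The plan is to start from the un-normalized expansion given in Proposition~\ref{srk} and simply carry out the normalization prescribed in the definition recalled just before the corollary. The calculation should be a short one because the basepoint is chosen so that only one term in the Schur-polynomial expansion survives when one of the arguments is set to $\bl 0$.

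First, I would observe that the point $\bl 0 = \bl s(\bl 0)\in\mb G_n$ lies in (say) a small polydisc around $0$ on which $\wi{\mathbf B}_{\mb G_n}^{(\mu)}(\cdot,\bl 0)$ must equal $1$ for the kernel to be normalized there. Next, I would evaluate the formula of Proposition~\ref{srk} at $\bl w = \bl 0$: since $S_{\bl p}$ is a homogeneous polynomial of degree $|\bl p|$, we have $S_{\bl p}(\bl 0)=0$ for every $\bl p\neq \bl 0$ in $[n]$, while $S_{\bl 0}\equiv 1$ (this is already recorded in the paragraph preceding the corollary). Consequently all terms except the $\bl p=\bl 0$ term vanish and
\[
\mathbf B^{(\mu)}_{\mb G_n}\big(\bl s(\bl z),\bl 0\big)=\g_{\bl 0}^2=\frac{(\mu)_{\bl\delta}}{\bl\delta!},\qquad \bl z\in\mb D^n,
\]
which is a non-zero real constant (in particular independent of $\bl z$).

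At this point I would invoke the standard formula for the normalization of a reproducing kernel at $w_0$, namely $\widetilde K(z,w)=K(w_0,w_0)K(z,w)/\big(K(z,w_0)\,\overline{K(w_0,w)}\big)$, and apply it with $w_0=\bl 0$ and $K=\mathbf B^{(\mu)}_{\mb G_n}$. Because the denominator reduces to the constant $\g_{\bl 0}^2\cdot \g_{\bl 0}^2$ and the numerator contributes an extra $\g_{\bl 0}^2=(\mu)_{\bl\delta}/\bl\delta!$, the net effect is simply multiplication of $\mathbf B^{(\mu)}_{\mb G_n}$ by $\bl\delta!/(\mu)_{\bl\delta}$, yielding
\[
\wi{\mathbf B}^{(\mu)}_{\mb G_n}\big(\bl s(\bl z),\bl s(\bl w)\big)=\frac{\bl\delta!}{(\mu)_{\bl\delta}}\sum_{\bl p\in[n]}\g_{\bl p}^2\,S_{\bl p}(\bl z)\overline{S_{\bl p}(\bl w)},
\]
which is exactly the claimed expression. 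A quick sanity check at $\bl w=\bl 0$ recovers $\wi{\mathbf B}^{(\mu)}_{\mb G_n}(\bl s(\bl z),\bl 0)=1$, confirming the normalization.

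There is essentially no substantive obstacle: the whole statement is a one-line algebraic consequence of Proposition~\ref{srk} together with the vanishing $S_{\bl p}(\bl 0)=\delta_{\bl p,\bl 0}$. The only point that deserves mention is that the formula is initially derived for $\mu>1$ (where $\mathbf B^{(\mu)}_{\mb G_n}$ comes from an $L^2$ inner product), and the extension to all $\mu>0$ is justified by the paragraph between Proposition~\ref{srk} and the corollary, where $\mathbf B^{(\mu)}_{\mb G_n}$ is defined directly by \eqref{rk} as a positive definite kernel for every $\mu>0$; the normalization argument above uses only that positivity and the homogeneity of the $S_{\bl p}$, so it is valid in the full range $\mu>0$.
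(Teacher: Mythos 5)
Your proposal is correct and follows essentially the same route as the paper, which records $S_{\bl 0}\equiv 1$, $S_{\bl p}(\bl 0)=0$ for $\bl p\neq\bl 0$, and hence $\mathbf B^{(\mu)}_{\mb G_n}\big(\bl s(\bl z),\bl 0\big)=\g_{\bl 0}^2=\frac{(\mu)_{\bl\delta}}{\bl\delta!}$ in the paragraph preceding the corollary, and then treats the rescaling by $\frac{\bl\delta!}{(\mu)_{\bl\delta}}$ as immediate. Your extra remarks --- that the general normalization formula collapses to multiplication by this constant because $\mathbf B^{(\mu)}_{\mb G_n}(\cdot,\bl 0)$ is constant, and that the argument persists for all $\mu>0$ via the direct definition of $\mathbf B^{(\mu)}_{\mb G_n}$ by \eqref{rk} --- are accurate and consistent with the paper.
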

It is of independent interest to express the reproducing kernel $\mathbf B_{\mb{G}_n}^{(\mu)}$ in terms of coordinates of $\mb G_n,$ that is, in terms of elementary symmetric polynomials. In order to do that, we  need to introduce some terminologies. To a partition $\bl p=(p_1,\ldots, p_n)\in [n]$ is associated a {\it Young diagram} \cite[Section 4.1]{FH} with $p_i$ boxes in the $i$-th row, the rows of boxes lined up on the left. The {\it conjugate partition}  $\bl p^\i=(p_1^\i,\ldots,p_r^\i)$ to the partition $\bl p$ is defined by interchanging rows and columns in the Young diagram, that is, reflecting the diagram in the $45^\circ$  line. For example, the conjugate partition to the partition $(3,3,2, 1,1)$ is $(5,3,2).$ For the conjugate partition $\bl p^\i=(p_1^\i,\ldots,p_r^\i)$ to $\bl p,$ let us require that $p_r^\i>0$ and call $r$ the length of $\bl p^\i .$ Let us agree to call $s_k$  the $k$-th elementary symmetric polynomial in $n$ variables for $k=0,1,\ldots,$ with the convention that $s_k\equiv 0$ if $k>n.$  We are now ready to state the second of {\it Giambelli's formulas} expressing the Schur polynomials as functions of  elementary symmetric polynomials. Here is Giambelli's second formula  \cite[p. 455]{FH}:
\bea\label{G}
S_{\bl p}=\det\Big((\!(s_{p_i^\i+j-i})\!)_{i,j=1}^r\Big),\,\, \bl p\in [n],
\eea
where $\bl p^\i=(p_1^\i,\ldots,p_r^\i)$ is the conjugate partition to $\bl p.$

Combining Corollary \ref{nork} with the Equation \eqref{G}, we obtain the following theorem.
\begin{thm}\label{BergKerGn}
The normalized reproducing kernel $\wi{\mathbf B}^{(\mu)}_{\mb G_n}$ for $\mb{A}^{(\mu)}(\mb{G}_n)$ is given by
\Bea
\wi{\mathbf B}_{\mb{G}_n}^{(\mu)}(\bl s,\bl t)
=\frac{{\bl\delta}!}{(\mu)_{\bl\delta}}\sum_{\bl p \in [n] } \g_{\bl p}^2\det\Big((\!(s_{p_i^\i+j-i})\!)_{i,j=1}^r\Big)\overline{\det\Big((\!(t_{p_i^\i+j-i})\!)_{i,j=1}^r\Big)},
\Eea
for $\bl s=(s_1,\ldots, s_n), \bl t=(t_1,\ldots, t_n)\in\mb G_n, \mu>0$ and  $\bl p^\i=(p_1^\i,\ldots,p_r^\i)$ is the conjugate partition to $\bl p\in [n].$

\end{thm}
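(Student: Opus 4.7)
The plan is to derive the stated formula as a direct consequence of two results already in hand: Corollary \ref{nork}, which expresses the normalized Bergman kernel of $\mb G_n$ as a bilinear sum in the Schur polynomials, and Giambelli's second formula \eqref{G}, which rewrites each Schur polynomial as a determinant whose entries are elementary symmetric polynomials.

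First, I would start from the identity
\begin{equation*}
\wi{\mathbf B}_{\mb G_n}^{(\mu)}\big(\bl s(\bl z), \bl s(\bl w)\big) \;=\; \frac{\bl\delta!}{(\mu)_{\bl\delta}}\sum_{\bl p \in [n]} \g_{\bl p}^2\, S_{\bl p}(\bl z)\,\overline{S_{\bl p}(\bl w)}
\end{equation*}
supplied by Corollary \ref{nork}. For each partition $\bl p \in [n]$ with conjugate partition $\bl p^\i = (p_1^\i, \ldots, p_r^\i)$, I would apply \eqref{G} to replace $S_{\bl p}(\bl z)$ by $\det\big((\!(s_{p_i^\i + j - i}(\bl z))\!)\big)_{i,j=1}^r$, and likewise for $S_{\bl p}(\bl w)$. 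After substitution the right-hand side depends on $\bl z$ and $\bl w$ only through the elementary symmetric functions $s_k = s_k(\bl z)$ and $t_k = s_k(\bl w)$, which are exactly the coordinate functions on $\mb G_n$. Writing $\bl s = (s_1, \ldots, s_n)$ and $\bl t = (t_1, \ldots, t_n)$, the resulting expression is a bona fide function on $\mb G_n \times \mb G_n$ and gives precisely the formula asserted in Theorem \ref{BergKerGn}.

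There is no real obstacle beyond bookkeeping: the legitimacy of viewing both sides as functions on $\mb G_n \times \mb G_n$ rests on the fact that each $S_{\bl p}$ is symmetric in $z_1, \ldots, z_n$, and one must honour the convention $s_k \equiv 0$ for $k > n$ (and $k < 0$) so that Giambelli's determinant is unambiguous when the index $p_i^\i + j - i$ falls outside $\{0, 1, \ldots, n\}$. Once these conventions are fixed, the theorem is obtained by a single substitution of \eqref{G} into the formula of Corollary \ref{nork}; no additional analytic input is required.
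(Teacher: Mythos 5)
Your proposal is correct and is exactly the argument the paper intends: the theorem is obtained by substituting Giambelli's second formula \eqref{G} into the Schur-polynomial expansion of Corollary \ref{nork}, so that the kernel is expressed in the coordinates $\bl s, \bl t$ of $\mb G_n$. Your remarks on the convention $s_k \equiv 0$ for $k > n$ and on symmetry ensuring well-definedness on $\mb G_n \times \mb G_n$ are exactly the bookkeeping the paper leaves implicit.
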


\begin{lem}\label{coe1}
Let $\wi{\mathbf B}^{(\mu)}_{\mb G_n}$ be the  normalized reproducing kernel for $\mb{A}^{(\mu)}(\mb{G}_n).$ Then
\begin{enumerate}
\item[(i)]  the coefficient of  $s_1(\bl z)\overline{s_1(\bl w)}$  in  $\wi{\mathbf B}_{\mb{G}_n}^{(\mu)}\big(\bl s(\bl z ),\bl s(\bl w)\big)$ is $ \frac{\mu+n-1}{n},$
\item[(ii)]  the coefficient of  $s_1(\bl z)^2\overline{s_1(\bl w)^2}$  in  $\wi{\mathbf B}_{\mb{G}_n}^{(\mu)}\big(\bl s(\bl z ),\bl s(\bl w)\big)$ is $\frac{(\mu + n-1)(\mu + n)}{n(n+1)}.$
\end{enumerate}
\end{lem}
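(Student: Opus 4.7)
The plan is to read the coefficients directly off the Schur expansion of the normalized kernel given in Corollary~\ref{nork}. Since $S_{\bl p}$ is homogeneous of degree $|\bl p|=\sum p_i$, the bi-homogeneous term $s_1(\bl z)^k\,\overline{s_1(\bl w)^k}$ can only pick up contributions from partitions $\bl p\in[n]$ with $|\bl p|=k$. So for part (i) only $\bl p=(1,0,\ldots,0)$ is relevant, and for part (ii) only $\bl p=(2,0,\ldots,0)$ and $\bl p=(1,1,0,\ldots,0)$. Using Giambelli's formula \eqref{G} (or directly, since these are the one-row and one-column cases), one has $S_{(1,0,\ldots,0)} = s_1$, $S_{(2,0,\ldots,0)} = s_1^2 - s_2$, and $S_{(1,1,0,\ldots,0)} = s_2$. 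In particular the monomial $s_1^2$ appears only in $S_{(2,0,\ldots,0)}$, with coefficient $1$.

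For part (i), this identifies the coefficient of $s_1(\bl z)\overline{s_1(\bl w)}$ as $\tfrac{\bl\delta!}{(\mu)_{\bl\delta}}\gamma_{(1,0,\ldots,0)}^{2}$. With $\bl\delta=(n-1,n-2,\ldots,1,0)$ and $\bl p+\bl\delta=(n,n-2,n-3,\ldots,1,0)$, the ratio $\gamma_{\bl p}^{2}=\frac{(\mu)_{\bl p+\bl\delta}}{(\bl p+\bl\delta)!}$ telescopes against $\tfrac{\bl\delta!}{(\mu)_{\bl\delta}}$, since all entries other than the first of $\bl p+\bl\delta$ match those of $\bl\delta$. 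What survives is
\[
\frac{(n-1)!}{(\mu)_{n-1}}\cdot\frac{(\mu)_{n}}{n!}\;=\;\frac{1}{n}\cdot\frac{(\mu)_{n}}{(\mu)_{n-1}}\;=\;\frac{\mu+n-1}{n}.
\]

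For part (ii), by the observation above only $\bl p=(2,0,\ldots,0)$ contributes to the coefficient of $s_1^2\overline{s_1^2}$, and with coefficient $1$ from the Giambelli expansion of $S_{(2,0,\ldots,0)}$. The corresponding $\bl p+\bl\delta=(n+1,n-2,n-3,\ldots,1,0)$ again differs from $\bl\delta$ only in the first slot, so the same telescoping gives
\[
\frac{(n-1)!}{(\mu)_{n-1}}\cdot\frac{(\mu)_{n+1}}{(n+1)!}\;=\;\frac{(\mu+n-1)(\mu+n)}{n(n+1)}.
\]

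There is no genuine obstacle here; the work is purely bookkeeping once the two facts are in place, namely that the Schur functions $S_{\bl p}$ of degree $\leq 2$ expressed in the elementary symmetric variables are $s_1,\ s_1^2-s_2,\ s_2$, and that the Pochhammer/factorial ratios in $\gamma_{\bl p}^{2}\cdot\tfrac{\bl\delta!}{(\mu)_{\bl\delta}}$ collapse whenever $\bl p$ has only one non-zero component. The only mild care needed is to keep track of the cancellation of the common tail $(n-2,n-3,\ldots,0)$ in both $\bl\delta$ and $\bl p+\bl\delta$.
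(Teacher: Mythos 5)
Your proposal is correct and follows essentially the same route as the paper: isolate the partitions of total degree $1$ and $2$, use Giambelli's formula to see that $s_1^2$ occurs only in $S_{(2,0,\ldots,0)}$, and evaluate $\tfrac{\bl\delta!}{(\mu)_{\bl\delta}}\gamma_{\bl p}^2$ for $\bl p=(1,0,\ldots,0)$ and $(2,0,\ldots,0)$. Your explicit telescoping of the Pochhammer and factorial ratios is just a more detailed writing-out of the computation the paper states directly.
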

\begin{proof}
Since the Schur polynomial  $S_{\bl p}$ is a homogeneous  symmetric polynomial of degree $\vert \bl p\vert:=\sum_{i=1}^np_i ,$  therefore, it is a polynomial in the elementary symmetric polynomials $s_i(\bl z)$ for $i=1,\ldots, n.$  For a fixed $k, q \in \mb{Z}_{+} ,$ the term $s_k(\bl z)^q\overline{s_k(\bl w)^q}$ in $\wi{\mathbf B}_{\mb{G}_n}^{(\mu)}\big(\bl s(\bl z),\bl s(\bl w)\big)$ comes only from the terms which involves $S_{\bl p}(\bl z)\overline{S_{\bl p}(\bl w)}$ in the series for $\wi{\mathbf B}_{\mb{G}_n}^{(\mu)}\big(\bl s(\bl z),\bl s(\bl w)\big)$ in Equation \eqref{nrk}, where   $\bl p=(p_1,\ldots,p_n) \in [n] $ such that  $\sum_{i=1}^{n} p_i =kq.$

To get the coefficient of $s_1(\bl z)\overline{s_1(\bl w)}$ in $\wi{\mathbf B}_{\mb{G}_n}^{(\mu)}\big(\bl s(\bl z),\bl s(\bl w)\big),$  take $\bl p=(1,0,...,0).$ From Equation \eqref{nrk}, the coefficient of  $s_1(\bl z)\overline{s_1(\bl w)}$  in  $\wi{\mathbf B}_{\mb{G}_n}^{(\mu)}\big(\bl s(\bl z ),\bl s(\bl w)\big)$ is
\Bea\label{one2}
\frac{{\bl\delta}!}{(\mu)_{\bl\delta}}\g_{\bl p}^2 =\frac{{\bl\delta}!}{(\mu)_{\bl\delta}}\cdot\frac{(\mu)_{\bl p+\bl\delta}}{(\bl p+\bl\delta)!}= \frac{\mu+n-1}{n},
\Eea
where $\bl p=(1,0,\ldots, 0).$ This proves (i).

Similarly, to obtain the coefficient of $s_1(\bl z)^2\overline{s_1(\bl w)^2} $ in  $\wi{\mathbf B}_{\mathbf{G}_n}^{(\mu)}\big(\bl s(\bl z),\bl s(\bl w)\big),$ we need to consider terms corresponding to $\bl p=(2,0,\ldots,0)$ and $\bl p=(1,1,0,\ldots,0).$  From the Giambelli's formula \eqref{G}, we get
$S_{(2,0,...,0,0)}(\bl z)= ({s_1}^2 - s_2)(\bl z) $ and $S_{(1,1,...,0,0)}(\bl z)=  s_2(\bl z).$

Since $s_1^2$ appears  only in $S_{(2,0,\ldots,0)},$ from Equation \eqref{nrk},  it follows that the coefficient of  $s_1(\bl z)^2\overline{s_1(\bl w)^2}$  in  $\wi{\mathbf B}_{\mb{G}_n}^{(\mu)}\big(\bl s(\bl z ),\bl s(\bl w)\big)$ is
 \Bea\label{onesq2}
\frac{{\bl\delta}!}{(\mu)_{\bl\delta}}\g_{\bl p}^2 =\frac{{\bl\delta}!}{(\mu)_{\bl\delta}}\cdot\frac{(\mu)_{\bl p+\bl\delta}}{(\bl p+\bl\delta)!}= \frac{(\mu + n-1)(\mu + n)}{n(n+1)},
\Eea
where $\bl p=(2,0,\ldots,0).$ This proves (ii).
\end{proof}

Consider the restriction of the action of $\mathfrak{S}_n$ to $\mb{Z}_{+}^{n}.$
Let $\mathfrak S_n\bl m$ denote the orbit of $\bl m\in\mb Z_+^n.$
If  $\bl m\in [n]$ has $k(\leq n)$ distinct components, that is, there are $k$ distinct  non-negative integers $m_1>\ldots> m_k$ such that
\Bea
\bl m=(m_1,\ldots,m_1, m_2,\ldots,m_2,\ldots,m_k,\ldots,m_k),
\Eea
where each $m_i$ is repeated $\a_i$ times, for $i=1,\ldots, k,$ then $\bl \a=(\a_1,\ldots, \a_k)$ is said to be the  {\it multiplicity} of $\bl m\in [n].$ For any $\bl m\in\mb Z_+^n$ the components of $\bl m$ can be arranged in the decreasing order to obtain, say, $\widetilde{\bl m}\in [n].$ We say that $\bl m\in\mb Z_+^n$ is of {\it multiplicity} $\bl \a=(\a_1,\ldots, \a_k)$ if $\widetilde{\bl m}$  has multiplicity $\bl \a$. In particular, the elements of $[\![n]\!]$ are of multiplicity $(1^n),$ that is, $1$ occurs $n$-times.

We recall that the number of distinct $n$-letter words with $k$ distinct letters is  $\frac{n!}{\bl\a!}=\frac{n!}{\a_1!\ldots \a_k!},$
where the $k$ distinct letters $a_1,\ldots, a_k$ are repeated $\a_1,\ldots, \a_k$ times, respectively $(\a_1+\ldots+\a_k=n).$ In other words, for a fixed $\bl m\in\mb Z_+^n$, we have $\vert \mathfrak  S_n\bl m\vert=\frac{n!}{\bl\a!},$ where $\vert X\vert$ denotes the cardinality of a set $X.$ Let   $\mb Z_+^n/\mathfrak S_n$ denote the set of all orbits of $\mb Z_+^n$ under the action of  $\mathfrak S_n.$ We record the following as a lemma for later use.






\begin{lem}\label{rep}
The set $\mb Z_+^n/\mathfrak S_n$  is in one-one correspondence with the set $[n].$
\end{lem}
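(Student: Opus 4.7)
The plan is to exhibit a natural bijection via the ``sorting map.'' Define $\Phi: \mathbb Z_+^n \to [n]$ by sending $\bl m = (m_1, \ldots, m_n)$ to $\widetilde{\bl m}$, the unique rearrangement of the components of $\bl m$ in weakly decreasing order. This is well-defined since every finite sequence of real numbers admits a unique weakly decreasing rearrangement. Since rearranging by decreasing order is invariant under any permutation of the coordinates, $\Phi$ is constant on $\mathfrak S_n$-orbits and therefore descends to a map $\overline{\Phi}: \mathbb Z_+^n/\mathfrak S_n \to [n]$.

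In the other direction, let $\iota: [n] \hookrightarrow \mathbb Z_+^n$ be the inclusion and let $q: \mathbb Z_+^n \to \mathbb Z_+^n/\mathfrak S_n$ be the quotient map. Define $\Psi = q \circ \iota : [n] \to \mathbb Z_+^n/\mathfrak S_n$.

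To verify that $\overline{\Phi}$ and $\Psi$ are mutually inverse, I would check: (a) for $\bl p \in [n]$, already $\widetilde{\bl p} = \bl p$, so $\overline{\Phi} \circ \Psi = \mr{id}_{[n]}$; (b) for any $\bl m \in \mathbb Z_+^n$, the element $\widetilde{\bl m}$ lies in the orbit $\mathfrak S_n \bl m$ (since it is obtained from $\bl m$ by a permutation of coordinates), so $\Psi \circ \overline{\Phi} = \mr{id}_{\mathbb Z_+^n/\mathfrak S_n}$.

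The only non-trivial point is the uniqueness embedded in (a), namely that if $\bl p, \bl q \in [n]$ and $\bl q = \sigma \cdot \bl p$ for some $\sigma \in \mathfrak S_n$, then $\bl p = \bl q$. But both $\bl p$ and $\bl q$ are weakly decreasing rearrangements of the same multiset of non-negative integers, and such a rearrangement is unique, forcing $\bl p = \bl q$. This is the essential content of the bijection and the only step requiring any argument; the remainder is formal. Hence $\overline{\Phi}$ is a bijection with inverse $\Psi$, completing the proof.
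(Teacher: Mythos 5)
Your proof is correct and is essentially the same argument as the paper's: both reduce the claim to the fact that each $\mathfrak S_n$-orbit in $\mb Z_+^n$ contains exactly one weakly decreasing representative, the uniqueness following because any two decreasing tuples in the same orbit are rearrangements of the same multiset. The packaging via the sorting map $\Phi$ and its inverse $\Psi=q\circ\iota$ is just a more formal presentation of the paper's direct verification.
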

\begin{proof} First, we prove that each $\mathfrak S_n$ orbit of $\mathbb Z_+^n$ has exactly one $n$-tuple in decreasing order. To see this, observe that each orbit contains an $n$-tuple in decreasing order, and hence enough to prove it is unique. Suppose there are two $n$-tuples in decreasing order, say $\bl m, \bl m^\prime,$ in the same orbit. Since a permutation only changes the position of a component, it follows that all $n$-tuples in an orbit have the same multiplicity. Therefore the multiplicity of $\bl m$ and $\bl m^\prime$ is the same and hence $\bl m =\bl m^\prime$.
Note that each element in $[n]$ is in some orbit and hence the proof is complete.
\end{proof}

Consider the  monomial symmetric polynomials \cite[p. 454]{FH}
 $$\mb{M}_{\bl m}(\bl z) = \sum_{\bl\beta}\bl z^{\bl\beta},$$
 where the sum is over all distinct permutations $\bl \beta=(\beta_1,\beta_2,...,\beta_n)$ of $\bl m\in[n]$ and $\bl z^{\bl\beta} = z_1^{\beta_1} z_2^{\beta_2}...z_n^{\beta_n}.$ This definition of ${\mb M}_{\bl m}$ makes sense for $\bl m\in\mb Z_+^n$ as well and we use  it in the sequel.   Observe that $ \mathfrak  S_n{\bl m}$ is the set of all distinct permutations of ${\bl m},$  so,
 \bea\label{mono}
 \hspace{1  cm}{\mb M}_{{\bl m}}(\bl z)=\sum_{\bl\b\in\mathfrak S_n{{\bl m}}}\bl z^{\bl \b} = \mb{M}_{{\bl m}^\i}(\bl z)\mbox{~for~}{\bl m},{\bl m}^{\i} \in \mathfrak S_n{{\bl m}}.
\eea
 The following lemma that gives us an expression for the reproducing kernel ${K}_{\mb{G}_n}^{(\l)}$ for $\mathcal{H}^{(\l)}(\mb{G}_n)$ will play a significant role in the sequel.

 \begin{lem}\label{km}
 The reproducing kernel ${K}_{\mb{G}_n}^{(\l)}$ for $\mathcal{H}^{(\l)}(\mb{G}_n)$ is given by the formula:
\Bea
{K}_{\mb{G}_n}^{(\l)}\big(\bl s(\bl z),\bl s(\bl w)\big) = \frac{1}{n!}\sum_{\bl m \in [n]} \frac{{\bl\alpha!(\l)_{\bl m}}}{\bl m!} \mb{M}_{\bl m}(\bl z) \overline{\mb{M}_{\bl m}(\bl w)},\, \bl z,\bl w\in\mb D^n,
\Eea
 where $\bl m$ is of multiplicity $\bl\a$.
 \end{lem}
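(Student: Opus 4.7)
The plan is to expand the permanent formula for $K_{\mb G_n}^{(\l)}$ supplied by Corollary \ref{sg} as a power series and regroup the terms according to $\mathfrak S_n$-orbits on $\mb Z_+^n$. Concretely, starting from
\[
K_{\mb G_n}^{(\l)}\big(\bl s(\bl z), \bl s(\bl w)\big) = \frac{1}{n!}\sum_{\sigma \in \mathfrak S_n} \prod_{k=1}^n (1 - z_k \bar w_{\sigma(k)})^{-\l},
\]
I would substitute the binomial expansion $(1 - z_k \bar w_{\sigma(k)})^{-\l} = \sum_{m_k \geq 0} \frac{(\l)_{m_k}}{m_k!} z_k^{m_k} \bar w_{\sigma(k)}^{m_k}$ into each factor and interchange the sums, obtaining
\[
K_{\mb G_n}^{(\l)}\big(\bl s(\bl z), \bl s(\bl w)\big) = \frac{1}{n!}\sum_{\bl m \in \mb Z_+^n} \frac{(\l)_{\bl m}}{\bl m!}\, \bl z^{\bl m} \sum_{\sigma \in \mathfrak S_n} \bar{\bl w}^{\sigma\cdot \bl m},
\]
where $\sigma\cdot \bl m = (m_{\sigma^{-1}(1)}, \ldots, m_{\sigma^{-1}(n)})$; here I have relabelled the index $j = \sigma(k)$ to convert $\prod_k \bar w_{\sigma(k)}^{m_k}$ into $\bar{\bl w}^{\sigma\cdot\bl m}$.

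Next I would use that $(\l)_{\bl m}$ and $\bl m!$ are invariant under permutations of $\bl m$, and regroup the outer sum over $\mb Z_+^n$ into a sum over orbits. By Lemma \ref{rep} every orbit has a unique representative in $[n]$. For a fixed representative $\bl m \in [n]$ of multiplicity $\bl\a$, the orbit $\mathfrak S_n \cdot \bl m$ contains exactly $n!/\bl\a!$ distinct tuples, and the stabilizer has order $\bl\a!$. Therefore, reading off the two monomial symmetric polynomials that appear, one obtains
\[
\sum_{\bl m' \in \mathfrak S_n \cdot \bl m}\bl z^{\bl m'} \;=\; \mb M_{\bl m}(\bl z) \qquad \text{and} \qquad \sum_{\sigma \in \mathfrak S_n} \bar{\bl w}^{\sigma\cdot\bl m} \;=\; \bl\a! \,\mb M_{\bl m}(\bar{\bl w}) \;=\; \bl\a!\,\overline{\mb M_{\bl m}(\bl w)},
\]
the last equality using that $\mb M_{\bl m}$ has real (in fact integer) coefficients.

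Assembling these, the double sum collapses to
\[
K_{\mb G_n}^{(\l)}\big(\bl s(\bl z), \bl s(\bl w)\big) = \frac{1}{n!}\sum_{\bl m \in [n]} \frac{\bl\a!\,(\l)_{\bl m}}{\bl m!}\,\mb M_{\bl m}(\bl z)\,\overline{\mb M_{\bl m}(\bl w)},
\]
which is the stated formula. There is no serious obstacle; the only point that demands care is correctly tracking the orbit-stabilizer bookkeeping, i.e.\ that among the $n!$ permutations of $\bl m \in [n]$, the orbit of size $n!/\bl\a!$ is traversed $\bl\a!$ times, which is what produces the $\bl\a!$ factor in the numerator and makes the summation over $[n]$ (rather than over $\mb Z_+^n$) valid.
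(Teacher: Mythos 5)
Your proposal is correct and follows essentially the same route as the paper: expand the permanent from Corollary \ref{sg} as a power series, interchange sums, and regroup the outer sum over $\mathfrak S_n$-orbits of $\mb Z_+^n$ via Lemma \ref{rep}, with the identity $\sum_{\sigma}\bar{\bl w}^{\sigma\cdot\bl m}=\bl\a!\,\overline{\mb M_{\bl m}(\bl w)}$ (which the paper phrases as $\mr{per}\big((\!(\bar w_i^{m_j})\!)\big)=\bl\a!\,\overline{\mb M_{\bl m}(\bl w)}$) supplying the factor $\bl\a!$. The orbit-stabilizer bookkeeping you flag as the delicate point is exactly the content of the paper's Equation \eqref{mpe}, so no gap remains.
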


 \begin{proof}
  If  ${{\bl m}}\in [n]$ is of multiplicity $\bl\a$, then ${\mb M}_{\bl m}(\bl z)$ is the sum of  $\vert \mathfrak S_n\bl m\vert=\frac{n!}{\bl\alpha!}$ distinct monomials. We then observe that
\bea\label{pex}
{\rm per} \Big((\!(z_i^{m_j})\!)_{i,j=1}^{n}\Big) =\d_{\sigma\in\mathfrak S_n}\prod_{i=1}^nz_i^{m_{\sigma(i)}}=\d_{\sigma\in\mathfrak S_n}\prod_{i=1}^nz_i^{m_{\sigma^{-1}(i)}}=\d_{\sigma\in\mathfrak S_n}{\bl z}^{\sigma\cdot\bl m} ,
\eea
 is the sum of $n!$  monomials,  from which exactly $ \vert\mathfrak  S_n{{\bl m}}\vert=\frac{n!}{\bl\a!}$   are distinct (since there can be only $\frac{n!}{\bl\a!}$ distinct permutations of a $\bl m\in [n]$ with multiplicity ${\bl\a}$). So, each distinct term must be repeated $\bl\a!$ times. Thus, from equation \eqref{mono} we conclude that

  \bea\label{mpe}
 {\rm per} \Big((\!(z_i^{m_j})\!)_{i,j=1}^{n}\Big) = \bl\alpha!\mb{M}_{\bl m^\i}(\bl z), \,\, \mbox{~for~any~} \bl m^\i\in \mathfrak S_n{{\bl m}}.
 \eea

Since $\mb Z_+^n$ is the disjoint union of its $\mathfrak S_n$-orbits, from Lemma \ref{rep} we have
\bea\label{union}
\mb Z_+^n={\cup}_{\bl m\in [n]}\mathfrak S_n\bl m.\eea

Therefore, from Corollary \ref{sg}, we have
\Bea
{K}_{\mb{G}_n}^{(\l)}\big(\bl s(\bl z),\bl s(\bl w)\big)
&=&\frac{1}{n!} \mr{per}\Big ( \big (\! \big ((1-z_j\bar w_k)^{-\l} \big )\!\big )_{j,k=1}^n\Big)\\
&=&\frac{1}{n!} \d_{\sigma\in  \mathfrak S_n}\displaystyle\prod_{i=1}^n(1-z_i\bar w_{\sigma(i)})^{-\l}\\
&=&\frac{1}{n!} \d_{\sigma\in  \mathfrak S_n}\d_{\bl m \in \mb Z_+^n}\frac{(\l)_{\bl m}}{\bl m!}\,\displaystyle\prod_{i=1}^n z_i^{m_i}\displaystyle\prod_{i=1}^n\bar w_{\sigma(i)}^{m_i}\\
&=&\frac{1}{n!} \d_{\bl m \in \mb Z_+^n}\frac{(\l)_{\bl m}}{\bl m!}\,\displaystyle\prod_{i=1}^n z_i^{m_i}\d_{\sigma\in  \mathfrak S_n}\displaystyle\prod_{i=1}^n\bar w_{\sigma(i)}^{m_i}\\
&=&\frac{1}{n!} \sum_{\bl m\in \mb Z_+^n} \frac{(\l)_{\bl m}}{\bl m!} \displaystyle\prod_{i=1}^n z_i^{m_i}{{\rm per}\Big((\!({\bar w}_i^{m_j})\!)_{i,j=1}^{n}}\Big)\\
&=&\frac{1}{n!} \d_{\bl m\in [n]} \d_{\bl m^\i\in \mathfrak S_n{{\bl m}}}\frac{(\l)_{\bl m^\i}}{\bl m^\i!} \displaystyle\prod_{i=1}^n z_i^{m_i^\i}{{\rm per}\Big( (\!({\bar w}_i^{m_j^\i})\!)_{i,j=1}^{n}}\Big) \mbox{~(using~\eqref{union})}\\
&=&\frac{1}{n!} \d_{\bl m\in [n]} \frac{(\l)_{\bl m}}{\bl m!}{{\rm per}\Big((\!({\bar w}_i^{m_j})\!)_{i,j=1}^{n}}\Big)\d_{\bl m^\i\in \mathfrak S_n{{\bl m}}}\displaystyle\prod_{i=1}^n z_i^{m_i^\i} \\
&=&\frac{1}{n!} \d_{\bl m\in [n]} \frac{(\l)_{\bl m}}{\bl m!} \bl\alpha! \overline{\mb{M}_{\bl m}(\bl w)}\d_{\bl m^\i\in \mathfrak S_n{{\bl m}}}\bl z^{\bl m^\i} \mbox{~(using~\eqref{mpe})}\\
&=&\frac{1}{n!}\sum_{\bl m \in [n]} \frac{{\bl\alpha!(\l)_{\bl m}}}{\bl m!} \mb{M}_{\bl m}(\bl z) \overline{\mb{M}_{\bl m}(\bl w)},
\Eea
where the last equality follows from  Equation \eqref{mono}.
\end{proof}

\begin{rem}
One could also write the reproducing kernel in terms of permanent using the equations \eqref{mpe} and \eqref{union} and the equality $\vert \mathfrak  S_n\bl m\vert=\frac{n!}{\bl\a!}$, as follows:
\Bea
{K}_{\mb{G}_n}^{(\l)}\big(\bl s(\bl z),\bl s(\bl w)\big) &=& \frac{1}{n!} \d_{\bl m\in [n]} \d_{\bl m^\i\in \mathfrak S_n{{\bl m}}}
\big (\frac{n!}{\bl\a!}\big )^{-1}\frac{{\bl\alpha!(\l)_{\bl m^\i}}}{\bl m^\i!}\frac{1}{\bl\a!}\mr{per}\Big ((\!( z_i^{m^\i_j} )\!)_{i,j=1}^n \Big)\frac{1}{\bl\a!}\mr{per}\Big((\!( \bar w_i^{m^\i_j} )\!)_{i,j=1}^n\Big)\\
&=&\frac{1}{(n!)^2}\sum_{\bl m\in\mb Z_+^n} \frac{(\l)_{\bl m}}{\bl m!} \mr{per}\Big ((\!( z_i^{m_j} )\!)_{i,j=1}^n \Big)\mr{per}\Big((\!( \bar w_i^{m_j} )\!)_{i,j=1}^n\Big),
\Eea
for $\bl z,\bl w\in\mb D^n.$
\end{rem}

We note that the kernels $B_{\mathbb G_n}^{(\l)}$ and $K_{\mathbb G_n}^{(\l)}$ are defined on all  of $\mathbb G_n.$ Hence the Hilbert modules $\mathbb P_{(n)} \big (\mathbb A^{(\l)}(\mathbb  D^n)\big )$ and $\mathbb P_{(1,\ldots ,1)} \big (\mathbb A^{(\l)}(\mathbb  D^n)\big )$ are locally free on  all of $\mathbb G_n$ strengthening our earlier assertion (Corollary \ref{cor2.13}) that they are locally free only on $\mathbb G_n \setminus \bl s(\mathcal Z).$ Thus we have proved the following Corollary.
\begin{cor}
The Hilbert modules $\mathbb P_{(n)} \big (\mathbb A^{(\l)}(\mathbb  D^n)\big )$ and $\mathbb P_{(1,\ldots ,1)} \big (\mathbb A^{(\l)}(\mathbb  D^n)\big )$ are locally free of rank $1$ on $\mathbb G_n.$
\end{cor}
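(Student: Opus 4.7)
The plan is to bootstrap from Corollary \ref{cor2.13} by using the identifications of these two sub-modules with reproducing kernel Hilbert spaces living on the symmetrized polydisc itself, where the kernel functions remain well defined and non-zero across the critical locus $\bl s(\m Z)$. By Lemma \ref{sue}, there is a unitary module isomorphism $\mb P_{(n)}\big(\mb A^{(\l)}(\mb D^n)\big) \cong \m H^{(\l)}(\mb G_n)$, and by Lemma \ref{aue}, a unitary module isomorphism $\mb P_{(1,\ldots,1)}\big(\mb A^{(\l)}(\mb D^n)\big) \cong \mb A^{(\l)}(\mb G_n)$; under these identifications, the $n$-tuple $M_{\bl s} = (M_{s_1},\ldots,M_{s_n})$ corresponds to the $n$-tuple of coordinate multiplication operators $(M_1,\ldots,M_n)$ acting on the respective RKHS on $\mb G_n$, and the ring $\mb C[\bl z]^{\mathfrak S_n} = \mb C[s_1,\ldots,s_n]$ becomes the usual polynomial ring on $\mb G_n$.

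Under these identifications, it suffices to show that $\m H^{(\l)}(\mb G_n)$ and $\mb A^{(\l)}(\mb G_n)$ are locally free of rank $1$ as Hilbert modules over $\mb C[s_1,\ldots,s_n]$ at every point of $\mb G_n$. For either of these RKHS, let $K$ denote the reproducing kernel (either $K^{(\l)}_{\mb G_n}$ from Lemma \ref{km} or $\mathbf B^{(\l)}_{\mb G_n}$ from Proposition \ref{srk}); both are defined on all of $\mb G_n \times \mb G_n$ since $\mb G_n = \bl s(\mb D^n)$ is an open subset of $\mb C^n$ on which these kernel series converge. Since the polynomial ring is dense in either space, the kernel is non-degenerate, hence by the argument sketched after Lemma \ref{dp} (cf.\ \cite[Lemma 3.6]{CS}) the kernel is strictly positive, so $K(\cdot,\bar{\bl u}) \neq 0$ for every $\bl u \in \mb G_n^*$.

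Fix an arbitrary $\bl u_0 \in \mb G_n$, choose a connected open neighbourhood $U^*$ of $\bar{\bl u_0}$ in $\mb G_n^*$, and define the anti-holomorphic section
\[
\gamma : U^* \to \m H^{(\l)}(\mb G_n), \qquad \gamma(\bar{\bl u}) = K(\cdot, \bar{\bl u}).
\]
The identity $M_i^* K(\cdot,\bar{\bl u}) = u_i K(\cdot,\bar{\bl u})$ shows that $\gamma(\bar{\bl u}) \in \bigcap_{i=1}^n \ker (M_i - u_i)^*$, and by Lemma \ref{dp}(1) this joint kernel is precisely the module tensor product $\m H^{(\l)}(\mb G_n) \otimes_{\mb C[s_1,\ldots,s_n]} \mb C_{\bl u}$. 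Because $K$ is strictly positive, $\gamma$ is nowhere-vanishing, and one checks directly (as in the proof of Lemma \ref{lem2.8}, but now with the single point $\bl u$ in place of the $n!$ pre-images $\bl w_\sigma$) that $\gamma(\bar{\bl u})$ spans the joint kernel. The same argument, verbatim, applies to $\mb A^{(\l)}(\mb G_n)$ with kernel $\mathbf B^{(\l)}_{\mb G_n}$.

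The only subtle point, and the one obstacle to guard against, is precisely what Corollary \ref{cor2.13} could not do earlier: at points $\bl u \in \bl s(\m Z)$ several of the polydisc pre-images $\bl w_\sigma$ coalesce, so the spanning family $\{K^{(\l)}_{\bl w_\sigma}\}_{\sigma \in \mathfrak S_n}$ on the polydisc side degenerates. The point of the above proposal is that on the $\mb G_n$-side there is never any degeneration: the joint kernel is spanned by the single vector $K(\cdot,\bar{\bl u})$ at every $\bl u \in \mb G_n$, and the identification of the sub-modules with honest RKHSs on $\mb G_n$ transfers this uniform rank-$1$ local freeness back to $\mb P_{(n)}\big(\mb A^{(\l)}(\mb D^n)\big)$ and $\mb P_{(1,\ldots,1)}\big(\mb A^{(\l)}(\mb D^n)\big)$ on all of $\mb G_n$.
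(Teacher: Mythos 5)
Your proposal is correct and follows essentially the same route as the paper: the paper's own (very terse) justification is precisely that the kernels $K^{(\l)}_{\mb G_n}$ and $\mathbf B^{(\l)}_{\mb G_n}$ are defined on all of $\mb G_n$, and you have simply filled in the details — transferring the modules to $\mb G_n$ via Lemmas \ref{sue} and \ref{aue} and checking that the nowhere-vanishing kernel section spans the one-dimensional joint kernel at every point, including on $\bl s(\m Z)$.
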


\begin{lem}\label{coe2}
Let ${ K}^{(\l)}_{\mb G_n}$ be the  normalized reproducing kernel for $\m H^{(\l)}(\mb{G}_n).$ Then
\begin{enumerate}
\item[(i)]  the coefficient of  $s_1(\bl z)\overline{s_1(\bl w)}$  in  $K_{\mb{G}_n}^{(\l)}\big(\bl s(\bl z ),\bl s(\bl w)\big)$ is $ \frac{\l}{n},$
\item[(ii)]  the coefficient of  $s_1(\bl z)^2\overline{s_1(\bl w)^2}$  in  $K_{\mb{G}_n}^{(\l)}\big(\bl s(\bl z ),\bl s(\bl w)\big)$ is $\frac{\l(\l +1)}{2n}.$
\end{enumerate}
\end{lem}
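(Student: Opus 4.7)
The plan is to read off both coefficients directly from the explicit expansion
\[
{K}_{\mb{G}_n}^{(\l)}\big(\bl s(\bl z),\bl s(\bl w)\big) = \frac{1}{n!}\sum_{\bl m \in [n]} \frac{{\bl\alpha!(\l)_{\bl m}}}{\bl m!}\, \mb{M}_{\bl m}(\bl z)\, \overline{\mb{M}_{\bl m}(\bl w)}
\]
established in Lemma \ref{km}, by expressing $s_1$ and $s_1^2$ in terms of the monomial symmetric polynomials $\mb M_{\bl m}$ and inverting. Since every term on the right is bi-homogeneous of bidegree $(|\bl m|,|\bl m|)$, only $\bl m \in [n]$ with $|\bl m|=1$ can contribute to $s_1(\bl z)\overline{s_1(\bl w)}$, and only those with $|\bl m|=2$ can contribute to $s_1(\bl z)^2\overline{s_1(\bl w)^2}$.

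For part (i), the unique $\bl m \in [n]$ with $|\bl m|=1$ is $\bl m=(1,0,\ldots,0)$, and for this partition $\mb M_{\bl m} = s_1$. Its multiplicity is $\bl\alpha=(1,n-1)$, so $\bl\alpha! = (n-1)!$, $(\l)_{\bl m} = \l$, and $\bl m! = 1$. Substituting gives the coefficient $\frac{1}{n!}\cdot\frac{(n-1)!\,\l}{1} = \frac{\l}{n}$, as claimed.

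For part (ii), the partitions in $[n]$ with $|\bl m|=2$ are $\bl m=(2,0,\ldots,0)$ and $\bl m=(1,1,0,\ldots,0)$, with
\[
\mb M_{(2,0,\ldots,0)} = s_1^2 - 2s_2, \qquad \mb M_{(1,1,0,\ldots,0)} = s_2.
\]
Only the first contains a nonzero multiple of $s_1^2$ (namely $s_1^2$ itself, with coefficient $1$), so the coefficient of $s_1(\bl z)^2\overline{s_1(\bl w)^2}$ in $K^{(\l)}_{\mb G_n}$ equals the coefficient attached to $\mb M_{(2,0,\ldots,0)}(\bl z)\overline{\mb M_{(2,0,\ldots,0)}(\bl w)}$ in Lemma \ref{km}. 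For $\bl m=(2,0,\ldots,0)$ we have $\bl\alpha=(1,n-1)$, hence $\bl\alpha! = (n-1)!$, $(\l)_{\bl m} = \l(\l+1)$, and $\bl m! = 2$, giving $\frac{1}{n!}\cdot\frac{(n-1)!\,\l(\l+1)}{2} = \frac{\l(\l+1)}{2n}$.

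There is no real obstacle here: the entire argument is a bookkeeping calculation once Lemma \ref{km} is available, and the only subtlety to flag is that one must verify that $\mb M_{(1,1,0,\ldots,0)} = s_2$ carries no $s_1^2$ contribution, so that the coefficient of $s_1^2 \overline{s_1^2}$ comes purely from the $\bl m=(2,0,\ldots,0)$ summand.
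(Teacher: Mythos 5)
Your proposal is correct and follows essentially the same route as the paper's proof: both read the coefficients off the expansion in Lemma \ref{km}, isolating the bidegree-one contribution from $\bl m=(1,0,\ldots,0)$ and the bidegree-two contribution from $\bl m=(2,0,\ldots,0)$ (noting that $\mb M_{(1,1,0,\ldots,0)}=s_2$ contributes no $s_1^2$ term), with the same multiplicity and Pochhammer bookkeeping.
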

\begin{proof}

Since the monomial symmetric polynomial  $\mb M_{\bl m}$ is a homogeneous  symmetric polynomial of degree $\vert \bl m\vert:=\sum_{i=1}^nm_i ,$  therefore, it is a polynomial in the elementary symmetric polynomials $s_i(\bl z)$ for $i=1,\ldots, n$. For a fixed $k, q \in \mb{Z}_{+} ,$ the term $s_k(\bl z)^q\overline{s_k(\bl w)^q}$ in $K_{\mb{G}_n}^{(\l)}\big(\bl s(\bl z),\bl s(\bl w)\big)$ comes only from the terms involving   $\mb{M}_{\bl m}(\bl z)\overline{\mb{M}_{\bl m}(\bl w)}$ in the series for  $K_{\mb{G}_n}^{(\l)}\big(\bl s(\bl z),\bl s(\bl w)\big)$ in   Lemma \ref{km}, where   $\bl m=(m_1,\ldots,m_n) \in [n]$ such that $\sum_{i=1}^{n} m_i = kq.$

To obtain the coefficient of $s_1(\bl z)\overline{s_1(\bl w)}$ in $K_{\mb{G}_n}^{(\l)}\big(\bl s(\bl z),\bl s(\bl w)\big),$ we only need to consider the term $\mb{M}_{\bl m}(\bl z)\overline{\mb{M}_{\bl m}(\bl w)},$ for $\bl m=(1,0,...,0).$
Note that $\mb{M}_{\bl m}(\bl z) = s_1(\bl z).$ Since $\bl m=(1,0,\ldots ,0)$ has multiplicity $\alpha = (1,(n-1)),$ it follows that  the coefficient of  $s_1(\bl z)\overline{s_1(\bl w)}$  in $ K_{\mb{G}_n}^{(\l)}\big(\bl s(\bl z),\bl s(\bl w)\big) $ is
\Bea
 \frac{1}{n!}\cdot \frac{{\bl\alpha!(\l)_{\bl m}}}{\bl m!}= \frac{(n-1)!1!(\l)_1}{1!n!} = \frac{\l}{n}.
\Eea
This proves (i).

Analogously, to find the coefficient of $s_1(\bl z)^2\overline{s_1(\bl w)^2}$ in $K_{\mb{G}_n}^{(\l)}\big(\bl s(\bl z),\bl s(\bl w)\big),$  we need to consider terms corresponding to  $\bl m=(2,0,...,0)$ and $\bl m=(1,1,0,...,0).$ Note that  $\mb{M}_{\bl m}(\bl z)=s_2(\bl z)$ for  $\bl m=(1 ,1, 0, \ldots ,0),$  so the  coefficient of the term   $\mb{M}_{\bl m}(\bl z)\ov{\mb M_{\bl m}(\bl w)}$ for  $\bl m=(1 ,1, 0, \ldots ,0),$ will not contribute here. Now  $\mb{M}_{\bl m}(\bl z)=s_1(\bl z)^2- 2 s_2(\bl z)$  for $\bl m=(2, 0, \ldots, 0).$ Since $\bl m=(2,0,\ldots,0)$ has  multiplicity $\bl\a=(1, n-1),$ it follows that the  coefficient of  $s_1(\bl z)^2\overline{s_1(\bl w)^2}$  in $ K_{\mb{G}_n}^{(\l)}\big(\bl s(\bl z),\bl s(\bl w)\big) $ is

\Bea
 \frac{1}{n!}\cdot \frac{{\bl\alpha!(\l)_{\bl m}}}{\bl m!}\nonumber=\frac{(n-1)! (\l)_2}{2!n!}=\frac{\l(\l+1)}{2n}.
\Eea
This proves (ii).
\end{proof}

\begin{proof}[Proof of Theorem \ref{Gn}]
 If possible, let these two modules be unitarily equivalent. Recall that the reproducing kernels  $\wi{\mathbf B}_{\mb{G}_n}^{(\l)}$ and $K_{\mb{G}_n}^{(\l)}$  have the property that
\Bea
\wi{\mathbf B}_{\mb{G}_n}^{(\l)}\big(\bl s(\bl z),0\big)= K_{\mb{G}_n}^{(\l)}\big(\bl s(\bl z),0\big)=1\,\, \text{ for } \bl s(\bl z)\in\mb G_n,
\Eea
that is, these are the normalized reproducing kernels at $0$ of the respective Hilbert spaces.
Since by construction, the polynomial ring $\mb C[s_1,\ldots,s_n]=\mb C[\bl z]^{\mathfrak S_n}$ in $n$ variables is dense in both $\m H^{(\l)}(\mb G_n)$ and $\mb A^{(\l)}(\mb G_n),$ it follows (cf. \cite[Remark, p. 285]{DM}) that
the dimension of the joint kernel  is $1$ for all $\bl w \in \mathbb G_n.$
Therefore, by   \cite[Lemma 4.8(c)]{CS},  we infer that
\Bea
\wi{\mathbf B}_{\mb{G}_n}^{(\l)}\big(\bl s(\bl z),\bl s(\bl w)\big)= K_{\mb{G}_n}^{(\l)}\big(\bl s(\bl z),\bl s(\bl w)\big)\,\, \text{ for } \bl s(\bl z),\bl s(\bl w)\in\mb G_n.
\Eea

Equating the coefficients of $s_1(\bl z)\ov{s_1(\bl z)}$
from  Lemma \ref{coe1}
we see that $\l = \l + n - 1.$ Thus we must have $n=1$ completing the proof of  the Theorem.
\end{proof}
\begin{cor}\label{corf}
In the decomposition of the Hilbert module $\mathbb A^{(\l)}(\mathbb D^3):$
$$\mathbb A^{(\l)}(\mathbb D^3)= \mb P_{(3)}\big (\!\mb A^{(\l)}(\mb D^3)\!\big ) \oplus \mb P_{(2,1)}\big (\mb A^{(\l)}(\mb D^3)\big )\oplus \mb P_{(1,1,1)}\big (\mb A^{(\l)}(\mb D^3)\big ),$$ all the sub-modules  on the right hand side of the equality are inequivalent.
\end{cor}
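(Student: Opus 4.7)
The plan is to deduce the corollary by assembling the inequivalences already established for the three pairs of summands. For $n=3$ the partitions are $(3)$, $(2,1)$ and $(1,1,1)$, and the dimensions of the associated irreducible representations of $\mathfrak S_3$ are
\[
\chi_{(3)}(1)=1,\qquad \chi_{(2,1)}(1)=2,\qquad \chi_{(1,1,1)}(1)=1.
\]
Thus among the three pairs there are two in which the character dimensions differ and only one in which they coincide.

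First I would handle the two pairs with unequal dimensions. By Corollary \ref{cor2.13}, each $\mb P_{\bl p}\big(\mb A^{(\l)}(\mb D^3)\big)$ is locally free on $\mb G_3\setminus \bl s(\m Z)$ of rank $\chi_{\bl p}(1)^2$, so the ranks are $1$, $4$, and $1$ respectively. Since local rank is an invariant for locally free Hilbert modules (this is the content of Theorem \ref{m1}(b)), this immediately shows
\[
\mb P_{(2,1)}\big(\mb A^{(\l)}(\mb D^3)\big) \not\cong \mb P_{(3)}\big(\mb A^{(\l)}(\mb D^3)\big),\qquad \mb P_{(2,1)}\big(\mb A^{(\l)}(\mb D^3)\big) \not\cong \mb P_{(1,1,1)}\big(\mb A^{(\l)}(\mb D^3)\big)
\]
as modules over $\mb C[\bl z]^{\mathfrak S_3}$.

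For the remaining pair, $\mb P_{(3)}$ and $\mb P_{(1,1,1)}$, the rank argument is inconclusive since both summands are locally free of rank $1$. Here I invoke Theorem \ref{m} directly with $n=3$: the symmetric and anti-symmetric summands
\[
\mb A^{(\l)}_{\rm sym}(\mb D^3)=\mb P_{(3)}\big(\mb A^{(\l)}(\mb D^3)\big),\qquad \mb A^{(\l)}_{\rm anti}(\mb D^3)=\mb P_{(1,1,1)}\big(\mb A^{(\l)}(\mb D^3)\big)
\]
are inequivalent as $\mb C[\bl z]^{\mathfrak S_3}$-modules for every $\l>0$. Combining the three pairwise inequivalences finishes the proof. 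There is no real obstacle; the corollary is a direct bookkeeping consequence of Theorem \ref{m1} (which resolves pairs with distinct character dimensions via rank) and Theorem \ref{m} (which resolves the single remaining pair of one-dimensional characters via the comparison of normalized reproducing kernels carried out in the proof of Theorem \ref{Gn}).
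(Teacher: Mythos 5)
Your proposal is correct and follows essentially the same route as the paper: the rank invariant from Corollary \ref{cor2.13} and Theorem \ref{m1} separates $\mb P_{(2,1)}$ (rank $4$) from the two rank-one summands, and Theorem \ref{m} settles the remaining pair $\mb P_{(3)}$ versus $\mb P_{(1,1,1)}$. No gaps.
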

\begin{proof}
We have just proved that $\mb P_{(3)}\big (\mb A^{(\l)}(\mb D^3)\big )$ cannot be equivalent to $\mb P_{(1,1,1)}\big (\mb A^{(\l)}(\mb D^3)\big ),$ in general. Since the rank of the sub-module $\mb P_{(2,1)}\big (\mb A^{(\l)}(\mb D^3)\big )$ is $\chi_{(2,1)}(1)^2=4,$ \cite[Example 2.6]{FH}, it cannot be equivalent to either of these.
\end{proof}

%
%
\begin{rem} The proof of Theorem \ref{m} shows that we have proved a little more than what is claimed in the Theorem, namely:
The Hilbert modules $\mb A^{(\l)}_{\rm sym}(\mb D^n)$ and $\mb A^{(\mu)}_{\rm anti}(\mb D^n)$ over $\mb C[\bl z]^{\mathfrak S_n}$ are not  equivalent for any $\l, \mu > 0$ and $n \geq 2.$
To prove this more general claim, we merely note, as before,  that
equating the coefficients of $s_1(\bl z)\ov{s_1(\bl z)}$ and $s_1(\bl z)^2\ov{s_1(\bl z)^2}$ from  Lemma \ref{coe1} and Lemma \ref{coe2}, we obtain
\Bea\label{oneequate}
\l = \mu + n - 1  &\text{ and }& \frac{\l(\l+1)}{2n}=\frac{(\mu + n-1)(\mu + n)}{n(n+1)}.
\Eea
Combining these equations,
we have that $n=1,$ which proves our claim.  Indeed, the two modules $\mathbb P_{\bl p}^{ii} \big (\mathbb A^{(\l)}(\mathbb  D^n)\big )$ and $\mathbb P_{\bl q}^{jj}\big ( \mathbb A^{(\mu)}(\mathbb  D^n)\big )$ are not equivalent either for any $1 \leq i \leq \chi_{\bl p}(1)$ and $1 \leq j \leq \chi_{\bl q}(1)$ for which $\chi_{\bl p}(1)\ne \chi_{\bl q}(1).$
\end{rem}

In cases where $\chi_{\bl p}(1) > 1$, we believe, the work of \cite{KZ} and \cite{ES},
may be useful in answering the question of mutual equivalence of the sub-modules $\mathbb P_{\bl p}^{ii}\big ( \mathbb A^{(\l)}(\mathbb  D^n)\big )$. We intend to explore this possibility  in our future work.

Let $\mathcal H$ be a locally free Hilbert module over $\Omega\subseteq \C^n$. Following \cite{KZ} and \cite{ES}, we define a holomorphic  section $\gamma:\Omega\ra\mathcal H$ to be a \emph{spanning holomorphic cross-section} for $\mathcal H$ if
$$
\bigvee\{\gamma(z) : z\in\Omega\} = \mathcal H.
$$

Building on the work in \cite{KZ}, the existence of a spanning holomorphic cross-section for a large class of Hilbert modules over an admissible set was proved in \cite{ES}. However, in the case of the sub-modules $\mb P_{\bl p}^{ii}\big(\mb A^{(\l)}(\mb D^n)\big),$ the existence of a spanning holomorphic cross-section is easily established by exhibiting such a section. Indeed, we give an explicit realization of the spanning holomorphic cross-section for these sub-modules.

Let  $U$ be an open neighbourhood of of $\bl u_0$ in $\big(\mb G_n\setminus\bl s(\mathcal Z)\big)\cap \bl s\big(\mb D^n\setminus X\big).$ The function $\bl s$ admits $n!$ local inverses on the open set $U.$ Fix one such, say $\phi$. Define $\gamma(\bl u) =\mb P_{\bl p}^{ii} K^{(\l)}(\cdot, \phi(\bar{\bl u})),$  $\bl u \in U^*$. From Equation \eqref{pii}, it follows that $\gamma$ is a spanning holomorphic cross-section for $\mb P_{\bl p}^{ii}\big(\mb A^{(\l)}(\mb D^n)\big)$. Let $E_{\bl p}^{(i)} = \{(\bl u, x)\in U^*\times\mb P_{\bl p}^{ii}\big(\mb A^{(\l)}(\mb D^n)\big)\mid x = c\gamma(\bl u) \mbox{~for~some~} c\in\C \}$ denote the corresponding holomorphic hermitian  line bundle and
$$
\mathscr K_{\bl p}^{(i)}(\bl u) = -\sum_{j,k = 1}^n\partial_j\bar{\partial}_k \log \|\gamma(\bl u)\|^2 du_j\wedge d\bar {u_k},
$$
be the curvature of $E_{\bl p}^{(i)}.$ Now, we restate Theorem 5.2 of \cite{ES} using the spanning cross-sections we have found here.
\begin{thm}
For any two partitions  $\bl p$ and $\bl q$ of $n$ and for $i,j,$ $1\leq i\leq\chi_{\bl p}(1),$ $1\leq j\leq\chi_{\bl q}(1),$  the sub-modules  $\mb P_{\bl p}^{ii}\big(\mb A^{(\l)}(\mb D^n)\big)$ and $\mb P_{\bl q}^{jj}\big(\mb A^{(\l)}(\mb D^n)\big)$ are equivalent if and only if $\mathscr K_{\bl p}^{(i)} = \mathscr K_{\bl q}^{(j)}$.
\end{thm}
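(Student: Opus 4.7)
The plan is to adapt the classical Cowen--Douglas rigidity argument to the present setting, using the explicit spanning holomorphic cross-section $\gamma_{\bl p}^{(i)}(\bl u) = \mb P_{\bl p}^{ii} K^{(\l)}\!\big(\cdot,\phi(\bar{\bl u})\big)$ already exhibited above. The curvature $\mathscr K_{\bl p}^{(i)}$ is the Chern $(1,1)$-form of the Hermitian metric $\|\gamma_{\bl p}^{(i)}(\cdot)\|^2$ on the line bundle $E_{\bl p}^{(i)}$, and it is a complete invariant of $E_{\bl p}^{(i)}$ as a hermitian holomorphic line bundle on a contractible patch. The theorem then asserts that the unitary module equivalence of the reducing sub-modules is governed completely by the isomorphism class of these line bundles.

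For the sufficiency direction I would assume $\mathscr K_{\bl p}^{(i)}=\mathscr K_{\bl q}^{(j)}$ on a simply connected patch $U^*\subseteq \big(\mb G_n\setminus\bl s(\mathcal Z)\big)^*.$ Then $\log\big(\|\gamma_{\bl p}^{(i)}(\cdot)\|^2/\|\gamma_{\bl q}^{(j)}(\cdot)\|^2\big)$ is pluriharmonic, hence equals $2\,\mathrm{Re}(\log\psi)$ for some nowhere-vanishing holomorphic $\psi:U^*\to\mb C^\times$. Both sides of
\[
\big\langle\gamma_{\bl p}^{(i)}(\bl u),\gamma_{\bl p}^{(i)}(\bl v)\big\rangle = \psi(\bl u)\overline{\psi(\bl v)}\big\langle\gamma_{\bl q}^{(j)}(\bl u),\gamma_{\bl q}^{(j)}(\bl v)\big\rangle
\]
are sesqui-holomorphic in $(\bl u,\bl v)$ on $U^*\times U^*$ and agree on the diagonal, so polarisation forces equality throughout $U^*\times U^*$. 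This identity shows that $U_0\gamma_{\bl p}^{(i)}(\bl u):=\psi(\bl u)\gamma_{\bl q}^{(j)}(\bl u)$ is inner-product-preserving on the span of the joint eigenvectors, and the spanning property extends $U_0$ to a unitary $U$ between the two sub-modules. The eigenvalue relation \eqref{pii} shows that $U$ intertwines each $M_{s_k}^{*}$ on the spanning set, and therefore globally, so $U$ is a module isomorphism over $\mb C[\bl z]^{\mathfrak S_n}$.

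For the necessity direction, given a unitary module isomorphism $U$, the fact that $U$ commutes with the $M_{s_k}$ implies that $U$ carries each joint kernel $\bigcap_k\ker\big(M_{s_k}^{(\bl p,i)}-u_k\big)^{*}$ isomorphically onto the corresponding one in $\mb P_{\bl q}^{jj}\big(\mb A^{(\l)}(\mb D^n)\big)$, in particular forcing $\chi_{\bl p}(1)=\chi_{\bl q}(1)$. The crux is then a scalar-rigidity step: I would show that $U\gamma_{\bl p}^{(i)}(\bl u)=\psi(\bl u)\gamma_{\bl q}^{(j)}(\bl u)$ for a nowhere-vanishing holomorphic $\psi$. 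Once this is in hand, $\|\gamma_{\bl p}^{(i)}(\bl u)\|^2=|\psi(\bl u)|^2\|\gamma_{\bl q}^{(j)}(\bl u)\|^2$ and $\partial\bar\partial\log|\psi|^2\equiv 0$ yield the curvature identity by applying $-\partial\bar\partial\log$ to both sides.

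The main obstacle is precisely this scalar-rigidity step when $\chi_{\bl p}(1)>1$. For $\chi_{\bl p}(1)=1$ (the situation underlying Theorem \ref{m}), the joint eigenspace is one-dimensional, so $U\gamma_{\bl p}^{(i)}(\bl u)$ is automatically a holomorphic scalar multiple of $\gamma_{\bl q}^{(j)}(\bl u)$ and the argument is essentially Cowen--Douglas for $B_1$. When the joint eigenspace has dimension larger than one, pinning $U\gamma_{\bl p}^{(i)}(\bl u)$ down to a scalar multiple of the distinguished section $\gamma_{\bl q}^{(j)}(\bl u)$ requires the canonical characterisation of the spanning section as the image, under the projection $\mb P_{\bl q}^{jj}$, of the ambient reproducing kernel of $\mb A^{(\l)}(\mb D^n)$, and is precisely the rigidity content of \cite[Theorem 5.2]{ES}. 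My plan is therefore to verify the hypotheses of that theorem for our $\gamma_{\bl p}^{(i)}$'s --- most notably the spanning property together with their reproducing-kernel origin --- and then invoke it directly.
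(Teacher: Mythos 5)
Your proposal is correct and takes essentially the same route as the paper, which in fact offers no independent proof of this statement: it is presented explicitly as a restatement of Theorem 5.2 of \cite{ES} for the particular sections $\gamma(\bl u)=\mb P_{\bl p}^{ii}K^{(\l)}\big(\cdot,\phi(\bar{\bl u})\big)$, whose spanning property and eigenvector relation are exactly what the paper verifies via Equation \eqref{pii}. Your polarization argument for sufficiency is the standard one underlying that theorem, and your identification of the scalar-rigidity step for $\chi_{\bl p}(1)>1$ as the genuine content of \cite[Theorem 5.2]{ES} --- to be invoked rather than reproved --- coincides with the paper's stance.
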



\end{document}